\documentclass[12pt, oneside]{article}   	

\usepackage[english]{babel}
 
\usepackage{natbib}
\setcitestyle{authoryear,open={(},close={)}}
\usepackage{geometry} 
\usepackage{amsmath}
\usepackage{amssymb,amsfonts}        
\usepackage{newpxtext} 
\usepackage{bm}      		
\geometry{letterpaper}                   		
\usepackage{graphicx}				
\usepackage{bbm}								
\usepackage{amssymb}
\usepackage{amsthm}
\newtheorem{theorem}{Theorem}[section]
\newtheorem{lemma}[theorem]{Lemma}

\newtheorem{definition}[theorem]{Definition}
\newtheorem{remark}[theorem]{Remark}
\usepackage{breqn}
\usepackage{authblk}
\usepackage{tikz}
\usetikzlibrary{decorations.pathreplacing}
\usetikzlibrary{arrows,positioning}

\def\shape#1{
  \lower5pt\hbox{
  \hskip-7pt
  \tikzset{circ/.style={circle, draw, fill=black, scale=.2}}
  \begin{tikzpicture}[semithick,scale=.3]
  \node (l1) at (0,.866) [circ]{};
  \node (l2) at (1,.866) [circ]{};
  \node (l3) at (0.5,0) [circ]{};
  #1
  \end{tikzpicture}
  \hskip-8pt}
}
\def\Vshape{
  \draw[-,color=white] (l1) to node [auto] {} (l2);
  \draw[-] (l1) to node [auto] {} (l3);
  \draw[-] (l2) to node [auto] {} (l3);
}

\begin{document}

\title{Optimal Single Sample Tests for Structured \\ versus Unstructured Network Data}


\author[1]{Guy Bresler\thanks{guy@mit.edu}}
\author[1]{Dheeraj Nagaraj\thanks{dheeraj@mit.edu}}

\affil[1]{ Massachusetts Institute of Technology}


\maketitle

\begin{abstract}%
 We study the problem of testing, using only a single sample, between mean field distributions (like Curie-Weiss, Erd\H{o}s-R\'enyi) and structured Gibbs distributions (like Ising model on sparse graphs and Exponential Random Graphs). 
Our goal is to test without knowing the parameter values of the underlying models: only the \emph{structure} of dependencies is known.  
We develop a new approach that applies to both the Ising and Exponential Random Graph settings based on a general and natural statistical test. The test can distinguish the hypotheses with high probability above a certain threshold in the (inverse) temperature parameter, and is optimal in that below the threshold no test can distinguish the hypotheses.

 The thresholds do not correspond to the presence of long-range order in the models. By aggregating information at a global scale, our test works even at very high temperatures.
 The proofs are based on distributional approximation and sharp concentration of quadratic forms, when restricted to Hamming spheres. The restriction to Hamming spheres is necessary, since otherwise any scalar statistic is useless without explicit knowledge of the temperature parameter.  At the same time, this restriction radically changes the behavior of the functions under consideration, resulting in a much smaller variance than in the independent setting; this makes it hard to directly apply standard methods (i.e., Stein's method) for concentration of weakly dependent variables. Instead, we carry out an additional tensorization argument using a Markov chain that respects the symmetry of the Hamming sphere. 
%
%
%
%
%
\end{abstract}

\section{Introduction}

Hypothesis testing for network data has received a lot of attention in  recent years. There are two basic types of network data: first, the network or graph itself; and second, observations from the nodes in a network, where the network describes interactions between the nodes.
A recent example of the first type is studied in the paper of \cite{bubeck2016testing}, which gives an optimal single-sample test to distinguish between geometric random graphs and Erd\H{o}s-R\'enyi random graphs by counting the number triangles in the graph. Similarly, \cite{gao2017testing} use distributional approximation for a specific statistic to distinguish between an Erd\H{o}s-R\'enyi random graph and sample from the Stochastic Block model. Another paper in this direction is that of \cite{ghoshdastidar2017two}, who consider the problem of deciding whether two given graphs are samples from the same graph model or from two different models. Their method is based on existence of a statistic that concentrates at different values for the two different graph models. The problem of testing if a known graph (with atleast $\Omega(\log{n})$ vertices) is planted in a sample from Erd\H{o}s-R\'enyi random graph with known edge parameter was studied by \cite{javadi2015statistical}. They give sharp single sample thresholds for the problem and the corresponding statistical test which can achieve this threshold. As will be seen below, our result on testing graph model differs in the fact that we consider the appearance of much smaller subgraphs and the subgraphs are not `planted' explicitly. 

 As far as data from nodes in a network is concerned, \cite{martin2017exact} considers the problem of tractably finding goodness-of-fit for Ising models.  \cite{daskalakis2016testing} developed methods for testing whether samples are coming from a given known Ising model. They assume full knowledge of all the model parameters, use a test based on the empirical estimation for pairwise correlations among sites. Their sample complexity guarantees are polynomial in $n$, whereas use the special structure present in the mean-field case to give sharp threshold above which single sample testing is possible using a general framework applicable to other models.
 \cite{daskalakis2017concentration} 
and \cite{gheissari2017concentration} show concentration for polynomials of Ising models at high temperature, and improve the sample complexities obtained in \cite{daskalakis2016testing} for testing whether samples are from the product distribution (i.e., coordinates are independent) or from an Ising model $\pi$ guaranteed to have KL-divergence at least $\epsilon$ from the product distribution. Analogously, \cite{canonne2017testing} consider the problem of determining whether observed samples from a distribution $P$ agree with a known fully-specified Bayesian network $Q$, using multiple samples; and also the problem of testing whether two unknown Bayes nets are identical or not, using multiple samples. Finally, this latter paper considers also \emph{structure testing}, i.e., testing if samples are from a Bayes net with a certain structure. Our objectives differ from these papers in that: 1) our test is based on a single sample; and 2) there are no assumptions of separation in KL-divergence or total variation on the distributions generating the sample. Instead, the guarantees are in terms of the natural model parameter. \cite{mukherjee2013consistent} considers the problem of consistent parameter estimation of the two star (wedge graph) ERGM considered in this paper. Their method assumes that the strength of the 'wedge interaction' $\beta_2 \in (0,\infty)$ is fixed. Whereas, in our work, the sharp threshold for distinguishing this graph from Erd\H{o}s-R\'enyi graphs is shown to be $\beta_2 = \Theta(\frac{1}{\sqrt{n}})$, which goes to $0$ with $n$. It is unclear how their parameter estimation methods can be used in this case to obtain the sharp threshold behavior.

In this paper we prove an abstract result, Theorem~\ref{main_theorem}, that provides a framework for establishing near-optimal hypothesis tests between data from a network with a given dependency structure (like Ising model, Exponential Random Graph Model) and unstructured data (like Curie-Weiss, Erd\H{o}s-R\'enyi). We do not assume knowledge of model parameters, which makes the problem more challenging, but also more applicable to many settings where there is no way to learn them accurately based on one sample. 

As the first of two applications developed in this paper, we consider the problem of testing whether the network data comes from an Ising model over a known $d$-regular graph with unknown inverse temperature $\beta$ (also with possibly nonzero external field) or alternatively from a permutation invariant distribution (which includes the Curie-Weiss model at unknown temperature). In Section~\ref{sec:ising_model_result} we note that Curie-Weiss model is indeed the worst adversary. We motivate this problem by discussing an adversarial data scenario in Section~\ref{subsec:adversary_model}. 

\begin{theorem}[Informal version of Theorem~\ref{thm:ising_threshold}]
We can distinguish Ising models on $d$-regular graphs from the Curie-Weiss model (complete graph) with high probability with one sample if the inverse temperature $\beta$ of the Ising model satisfies $\beta\sqrt{nd} \to \infty $. Conversely, if $\beta\sqrt{nd} \to 0$, then there is no statistical test that can distinguish them with high probability, even using a constant number of i.i.d. samples.
\end{theorem}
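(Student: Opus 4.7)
My plan is to exploit the following observation: any permutation-invariant distribution on $\{\pm 1\}^n$ (in particular Curie-Weiss at any temperature) conditioned on the Hamming weight $s = \sum_i x_i$ is exactly uniform on the Hamming sphere $S_s = \{x \in \{\pm 1\}^n : \sum_i x_i = s\}$, whereas the Ising model on a $d$-regular graph $G=(V,E)$ conditioned on $S_s$ is the exponentially tilted measure $\pi^\beta_s(x) \propto \exp(\beta T(x))$ with $T(x) = \sum_{(i,j)\in E} x_i x_j$. This cleanly strips the unknown inverse temperature off of the null side. The test computes $s$ and $T(x)$ from the single sample and rejects the null when $T(x)$ exceeds its $(1-\alpha)$-quantile under the uniform law on $S_s$, a threshold that can be computed without any knowledge of $\beta$.

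\paragraph{Achievability.} Two quantitative estimates on each sphere drive the analysis. First, for the uniform null on $S_s$ a direct moment calculation gives $\mathbb{E}_{\pi_0}[T] = \tfrac{|E|(s^2-n)}{n(n-1)}$ and $\mathrm{Var}_{\pi_0}(T) = \Theta(nd)$ for typical $s = O(\sqrt{n})$: the leading contribution comes from the $|E| = nd/2$ diagonal squares, since the weak negative correlations induced by the sphere constraint render the cross-covariances $\mathrm{Cov}(x_ix_j,x_kx_\ell)$ of lower order (explicitly, $O(1/n)$ for edges sharing a vertex and $O(1/n^2)$ for disjoint edges). Second, since $\{\pi^\beta_s\}_\beta$ is a one-parameter exponential family in $T$, the map $\beta\mapsto\mathbb{E}_{\pi^\beta_s}[T]$ is non-decreasing with derivative $\mathrm{Var}_{\pi^\beta_s}(T)$, and a first-order expansion at $\beta=0$, bootstrapped by a monotonicity/convexity argument, yields $\mathbb{E}_{\pi^\beta_s}[T]-\mathbb{E}_{\pi_0}[T] = \Omega(\beta\,nd)$ in the relevant range. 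Comparing this mean shift to the null standard deviation $\Theta(\sqrt{nd})$ produces the signal-to-noise ratio $\beta\sqrt{nd}$, which exceeds any constant once $\beta\sqrt{nd}\to\infty$, so the test succeeds with high probability.

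\paragraph{Main obstacle: concentration of $T$ on the sphere.} The technical heart of the achievability direction is showing that $T$ concentrates around its mean on $S_s$ at scale $\sqrt{nd}$ with subgaussian-type tails, so that the rejection threshold is genuinely a small multiple of $\sqrt{nd}$ above $\mathbb{E}_{\pi_0}[T]$. On the product cube $\{\pm 1\}^n$ this would be a routine application of Stein's method or an Efron-Stein inequality for quadratic forms; the sphere constraint makes the coordinates weakly dependent and, crucially, shrinks the variance of $T$ well below the product-case variance, so off-the-shelf bounds are loose by polynomial factors and cannot deliver the sharp $\sqrt{nd}$ scale. Following the strategy announced in the abstract, I would introduce a reversible Kawasaki-type swap chain on $S_s$ (pick a random pair $(i,j)$ with $x_i\neq x_j$ and swap them) and tensorize the variance of $T$ through its Poincar\'e / modified log-Sobolev inequality, reducing the question to a one-swap local variance that is expressible as a bounded sum over vertex neighborhoods of degree $d$ and is therefore straightforward to estimate.

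\paragraph{Converse.} For the impossibility direction I would use Le Cam's two-point method with the permutation-invariant adversary chosen to be the mixture of uniforms on spheres whose $s$-marginal matches that of the Ising model (which, as noted in Section~\ref{sec:ising_model_result}, can be realized up to $o(1)$ TV error by an appropriate Curie-Weiss temperature, since the CW Hamiltonian is a function of $\sum_i x_i$ alone). Conditioning on $s$ reduces the comparison to $\pi^\beta_s$ versus $\pi_0$ on each $S_s$, and expanding
\[
1+\chi^2(\pi^\beta_s\|\pi_0) \;=\; \frac{\mathbb{E}_{\pi_0}[e^{2\beta(T-\mu_0)}]}{\bigl(\mathbb{E}_{\pi_0}[e^{\beta(T-\mu_0)}]\bigr)^2}
\]
and substituting the subgaussian concentration from the previous step yields $\chi^2(\pi^\beta_s\|\pi_0) = O(\beta^2\,\mathrm{Var}_{\pi_0}(T)) = O(\beta^2 nd)$, hence $\mathrm{TV}\to0$ whenever $\beta\sqrt{nd}\to0$. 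Since $\chi^2$ tensorizes multiplicatively over independent samples, a constant number of i.i.d.\ draws cannot rescue any test, giving the claimed indistinguishability.
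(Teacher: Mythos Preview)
Your overall framework matches the paper's: condition on the Hamming sphere, use the quadratic form $T(x)=\sum_{(i,j)\in E}x_ix_j$ as the statistic, and identify sharp concentration of $T$ on the sphere as the central technical obstacle. The converse via a $\chi^2$/MGF bound is also in the right spirit. However, the achievability argument has a genuine gap. A large mean shift compared to the null standard deviation does not by itself bound the type~2 error: you would need either concentration of $T$ under the \emph{alternative} $\pi^\beta_s$ (which you do not establish, and which is delicate) or \emph{anti}-concentration under the null. The paper takes the second route. It proves a quantitative CLT for $T$ under the uniform law on each sphere (via Stein's method with the same swap exchangeable pair), giving a lower bound on the upper tail $\pi_0(T\ge e_m+2T_n\sigma_m)\ge 1-\Phi(2T_n)-\tau_n$. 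Combined with the elementary tilting inequality
\[
\pi^\beta_s(T\le e_m+T_n\sigma_m)\;\le\;\frac{e^{-T_n\beta\sigma_m}}{\pi_0(T\ge e_m+2T_n\sigma_m)},
\]
this bounds the type~2 error uniformly in $\beta$ without ever computing moments under the alternative. Your mean-shift claim $\Omega(\beta nd)$ also cannot hold for large $\beta$ since $T\le nd/2$; the tilting argument has no such case split.

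There is a second issue with the concentration technique you propose. A Herbst argument from the log-Sobolev inequality of the Bernoulli--Laplace chain uses the Lipschitz constant of $T$ under one swap, which is $\Theta(d)$, together with the log-Sobolev constant $\Theta(1/(n\log n))$; this yields a variance proxy $\Theta(nd^2\log n)$, not $\Theta(nd)$, and would miss the sharp threshold in both directions by a factor $\sqrt{d\log n}$. The paper instead bounds moments via Chatterjee's exchangeable-pairs inequality $\mathbb{E}[(T-\mathbb{E}T)^{2q}]\le(2q-1)^q\lambda^{-2q}\mathbb{E}[\Delta(S)^q]$ with $\Delta(S)=\tfrac{\lambda}{2}\mathbb{E}[(T-T')^2\mid S]$, and controls $\mathbb{E}[\Delta^q]$ by the $2q$-th moments of the one-swap increment $d_{j,S}-d_{k,S}+d_{j,k}$; because this increment is sub-Gaussian at scale $\sqrt d$ (not $d$), one obtains sub-exponential concentration at the correct scale $\sqrt{nd}$. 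Finally, for the converse your adversary (mixture of uniforms matching the Ising magnetization marginal) is not obviously realizable as a Curie--Weiss model; the paper avoids this by directly choosing $\beta^{\mathrm{CW}}=nd\beta^{\mathrm{dreg}}/(n-1)$ and $h^{\mathrm{CW}}=h^{\mathrm{dreg}}$, which makes $e_m(g)=0$ on every sphere and reduces the entire TV bound to the MGF estimate above.
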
 

\begin{remark}
We interpret the result above as follows: whenever $\beta\sqrt{nd} \to \infty$, an adversary cannot come up with a Curie-Weiss sample at some temperature such that it can be confused for a sample from the $d$ regular Ising model. Conversely, whenever $\beta\sqrt{nd} \to 0$, the adversary can choose a Curie-Weiss model at a specific temperature depending only on $\beta$ such that the total variation distance between these distributions converges to $0$. The problem is formulated in the minimax sense. 
\label{rem:explain_ising_result}
\end{remark}

The result works for every $d$-regular graph. It was shown in \cite{bresler2017stein} that pairwise correlations, and more generally $k$th-order moments, of the Curie-Weiss model can be well approximated on average by expander graphs, yet the result above holds even when the underlying graph is an expander. The test also works deep inside the high temperature regime ($\beta \leq \Theta(\frac{1}{d})$), when there is no global order, by aggregating small dependencies from the entire network.

 Our results also apply to certain random graph distributions, and
in Section~\ref{sec:ergm_result} we apply our framework to compare $ G(n,p_n)$ (the Erd\H{o}s-R\'enyi model) and exponential random graphs. 
 Let $\mathrm{ERGM}(\beta_1,\beta_2)$ be the exponential random graph with respect to the single edge $E$ and the $V$-graph \hbox{(\shape{\Vshape})} with inverse temperature parameters $\beta=(\beta_1,\beta_2) \in \mathbb{R}^2$. The parameter $\beta_1$ controls edge density, while $\beta_2$ encourages presence of $V$-subgraphs.
 
\begin{theorem}[Informal version of Theorem~\ref{thm:ergm_threshold}]
We can distinguish $G(n,p)$ and $\mathrm{ERGM}(\beta)$ with high probability with one sample if $\sqrt{n}\beta_{2}  \to \infty $. Conversely, if $\sqrt{n}\beta_{2} \to 0$, then there is no statistical test which can distinguish them with high probability using constant number of i.i.d. samples.
\end{theorem}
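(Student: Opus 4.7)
My plan is to instantiate the general framework of Theorem~\ref{main_theorem} with test statistic $T(G) = V(G)$, the number of wedges, and the Hamming sphere given by a fixed edge count $|E(G)| = m$. Since neither the null parameter $p$ nor the $\mathrm{ERGM}$ parameter $\beta_1$ is known, the test must condition on the observed value of $m$: under $G(n,p)$ the conditional law on $\{|E|=m\}$ is uniform on graphs with $m$ edges, while under $\mathrm{ERGM}(\beta_1,\beta_2)$ the $\beta_1$ term drops and the conditional law is proportional to $e^{\beta_2 V(G)}$ on the same set. The problem therefore reduces to distinguishing uniform from exponentially tilted distributions on each Hamming sphere, with tilt parameter $\beta_2$.

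For the positive direction, I would use the identity $V(G) = \tfrac12 \sum_v d_v^2 - m$, so on the sphere $V$ is essentially the empirical variance of the degree sequence. A cumulant expansion of $\log \mathbb{E}_{\mathrm{Unif}}[e^{\beta_2 V} \mid |E|=m]$ gives that the mean shift between the tilted and uniform conditional distributions equals $\beta_2\,\mathrm{Var}_{\mathrm{Unif}}(V \mid |E|=m)\,(1+o(1))$. Decomposing $\sum_v d_v^2$ into the component along the constraint $\sum_v d_v = 2m$ and its orthogonal complement, and comparing with the independent-edge calculation, yields $\mathrm{Var}_{\mathrm{Unif}}(V\mid |E|=m) = \Theta(n)$ in the regime of interest, so the mean gap $\Theta(\beta_2 n)$ exceeds the conditional standard deviation $\Theta(\sqrt{n})$ precisely when $\beta_2 \sqrt{n} \to \infty$. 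Combined with concentration of $V$ under both distributions, this meets the hypotheses of Theorem~\ref{main_theorem} and produces a successful test.

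The concentration is the main technical obstacle. The variance of $V$ on the unrestricted hypercube $\{0,1\}^{\binom{n}{2}}$ is much larger than on the Hamming sphere, so a direct application of exchangeable-pair or Stein-method bounds to independent edges is too loose by a polynomial factor. Following the blueprint suggested in the abstract, I would construct a Markov chain whose stationary distribution is uniform on the sphere (for instance, a chain that swaps a present edge for an absent one chosen uniformly), then tensorize: the one-step change in $V$ under an edge swap is controlled by the current maximum degree, and a Poincar\'e-type inequality for this chain converts this local bound into the sub-Gaussian tail of $V$ around its sphere-conditional mean required by Theorem~\ref{main_theorem}.

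For the lower bound, I would let the adversary choose $p$ so that $\binom{n}{2}p$ matches the ERGM's expected edge count, and bound the total variation via the conditional $\chi^2$-divergence on each sphere. On the sphere the likelihood ratio is $e^{\beta_2 V}/Z$, so a second-moment calculation gives $\chi^2 \approx \beta_2^2\,\mathrm{Var}_{\mathrm{Unif}}(V\mid |E|=m) = \Theta(\beta_2^2 n)$, which tends to $0$ when $\beta_2\sqrt{n}\to 0$; averaging over the observed $m$ and tensoring over a constant number of i.i.d.\ samples then bounds the total variation and rules out any test.
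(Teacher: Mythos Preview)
Your overall architecture --- condition on the edge count, use the wedge count $V$ as the statistic, and invoke Theorem~\ref{main_theorem} --- is exactly what the paper does. But two scaling errors in your proposal happen to cancel, and the lower bound sketch has a real gap.

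\textbf{Tilt and conditional variance.} The ERGM Hamiltonian is $\beta_2 N_2(G)/n^{v_2-2}=\beta_2\cdot 2V(G)/n$, so the exponential tilt on each Hamming sphere is by $\beta_n:=2\beta_2/n$, not by $\beta_2$. Separately, $\mathrm{Var}_{\mathrm{Unif}}(V\mid E=m)=\Theta(n^3)$, not $\Theta(n)$. The paper obtains this by writing $V$ as a quadratic form $\tfrac12 x^\top H x$ where $H$ is the $2(n-2)$-regular ``edge-incidence'' graph on $N=\binom{n}{2}$ vertices, passing to $\pm1$ variables via $(2x-\mathbbm 1)^\top H(2x-\mathbbm 1)=8V-8(n-2)E+\text{const}$, and applying Theorem~\ref{cut_size_clt} with $N$ nodes and degree $d=2(n-2)$: $\sigma_m^2=\Theta(Nd)=\Theta(n^3)$. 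Your two errors multiply to $\beta_2\cdot\sqrt{n}$ and thus accidentally give the right threshold, but the individual quantities you would need (e.g.\ for the CLT rate in Condition~\textbf{C4}, or the sub-exponential constants in Condition~\textbf{C5}) are off by powers of $n$.

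\textbf{Lower bound: matching edge-count laws.} Your conditional $\chi^2$ argument controls $d_{\mathsf{TV}}(\mu(\cdot\mid E=m),\nu(\cdot\mid E=m))$ on each sphere, but to conclude $d_{\mathsf{TV}}(\mu,\nu)\to 0$ you must also show that the \emph{marginal} laws of $E$ under $G(n,p)$ and under $\mathrm{ERGM}(\beta_1,\beta_2)$ are close. Choosing $p$ to match the ERGM's expected edge count is not enough: the ERGM edge-law is a tilt of $\mathrm{Bin}(N,p)$ by $Z_m=\mathbb{E}[e^{\beta_n g}\mid E=m]$, and one has to show this tilt is negligible. The paper handles this (Lemma~\ref{lem:super_concentration}) by choosing $\beta_1$ as a function of $p,\beta_2$ so that $\mathbb{E}[g\mid E=m]$ equals, up to a constant, the specific quadratic $h(E)=E^2-(2pN+1-2p)E$. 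The point of this particular linear coefficient is that $(h(X),h(X'))$ forms an $\eta$-Stein pair under the resample-one-coordinate chain, which forces $\mathrm{Var}\,h(E)=O(N^2)=O(n^4)$ rather than the naive $O(N^3)$ one would get from a generic quadratic in a binomial; together with the on-sphere MGF bound this yields Condition~\textbf{C5} with $\sigma_n=\Theta(n^{3/2})$. Without this step your ``averaging over $m$'' does not close.

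\textbf{Concentration.} Your plan --- an edge-swap chain on the sphere and a Poincar\'e/tensorization bound --- is the right instinct and is essentially what the paper does; it constructs the exchangeable pair $(S,S')$ by swapping a random present edge with a random absent one and uses it both for the CLT (Theorem~\ref{cut_size_clt}, via Stein's method) and for the sub-exponential moment bounds (Section~\ref{sec:burkholder_subexp}, via the Burkholder--Davis--Gundy--type inequality of Chatterjee). Controlling the one-step increment by ``the current maximum degree'' is too crude, though: the paper shows the increment $T(S')-T(S)$ equals $2(d_{j,S}-d_{k,S}+d_{j,k})$ and bounds its moments by $O(d^{r/2})$ via a sub-Gaussian estimate on $d_{j,S}$, which is what produces the correct $\Theta(Nd)$ variance proxy.
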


\begin{remark}
Similar to the result on Ising models, we interpret this result as follows: whenever $\beta_2\sqrt{n} \to \infty$, we can distinguish the $\mathrm{ERGM}(\beta_1,\beta_2)$ from $G(n,p)$ for any unknown $p$ and $\beta_1$ (under certain constraints on values taken by $p$). Conversely, whenever $\beta_2\sqrt{n} \to 0$, we can choose $p$ and $\beta_1$ such that the total variation distance between these distributions converges to $0$. Specifically, we can distinguish between these models even when the edge density in these models are the same as long as $\beta_2\sqrt{n} \to \infty$.
\end{remark}
In~\cite{bhamidi2011mixing} it is shown that in the high-temperature regime $\beta_2 \leq \Theta(1)$, any finite collection of $k$ edges converges in distribution to independence. (In $G(n,p)$ all edges are independent.) Our test aggregates global information to distinguish between them and works when the dependence parameter $\beta_2$ is much smaller than the high-temperature threshold. ~\cite{bhamidi2011mixing} and \cite{eldan2017exponential} consider existence of unique solutions to a certain fixed point equation to define the high temperature regime in ERGMs. We use an entirely different method to identify the phases in our setup -- where we choose parameters of degree 2 polynomials of binomial random variables to minimize the variance -- to choose $\beta_1$ as a function of $\beta_2$ and $p$ such that $\mathsf{ERGM}(\beta_1,\beta_2)$ converges in total variation distance to $G(n,p)$ whenever $\beta_2 \sqrt{n} \to 0$. This is illustrated in Appendix~\ref{sec:super_concentration}.

\paragraph{Outline.} The next subsection motivates our results with an adversarial data detection scenario. Section~\ref{s:notation} introduces notation and defines the Ising and exponential random graph models, formulates the exact statistical problem as well as gives intuition for the statistical test we use in our applications.
In Section~\ref{sec:abstract_result} we state our abstract hypothesis testing result, which is based on distributional approximation.
In Section~\ref{sec:ising_model_result} we apply our framework to prove Theorem~\ref{thm:ising_threshold} for the Ising model. 
In Section~\ref{sec:clt} we prove the required distributional approximation for quadratic forms using Stein's method and in Section~\ref{sec:burkholder_subexp} we prove sharp concentration inequalities for quadratic forms over the Hamming sphere using a novel method.

\subsection{Motivating example: detecting fraudulent data}
\label{subsec:adversary_model}
Suppose that we have collected responses to a survey from a set of people, indicating a binary preference for something (iPhone or Android, Democrat or Republican, etc.). Moreover, we have access to the network structure $G_0$ (e.g., induced Facebook subgraph) and the data is modeled by a family of probability distributions $\{Q_{G_0,\lambda}: \lambda \in \Lambda\}$ (e.g., Ising models on $G_0$) for some parameter set $\Lambda$.  An adversary may attempt to counterfeit the data generated by the network using instead a distribution $P$, possibly biased (e.g., to fix an election). We assume that the adversary may know the graph, but does not know the labeling of the nodes. The adversary therefore seeks to minimize the probability of the tampering being detected, which amounts to minimizing $\mathbb{E}_\pi  \inf_{\lambda\in \Lambda} d_{\mathsf{TV}}(P, Q_{\pi G,\lambda})$, where $\pi$ is a uniformly random permutation encoding the adversary's prior over the node labels.

The analysis of the quantity $\mathbb{E}_\pi  \inf_{\lambda\in \Lambda} d_{\mathsf{TV}}(P, Q_{\pi G,\lambda})$ is fairly involved and requires convexity of the class of distributions. Our framework is able to handle testing against a convex combination of distributions, but for this manuscript we instead relax this objective to $\inf_{\lambda \in \Lambda} \mathbb{E}_{\pi} d_{\mathsf{TV}}(P, Q_{G,\lambda})$.
%

For any permutation $\pi$, let the distribution $\pi P$ be defined by $\pi P(x) = P(\pi(x))$. For arbitrary $\lambda \in \Lambda$,
\begin{align}
 \mathbb{E}_\pi d_{\mathsf{TV}}(P,Q_{\pi G,\lambda}) &=\mathbb{E}_{\pi} d_{\mathsf{TV}}(\pi^{-1}P,Q_{G,\lambda}) \nonumber \\
&=  \frac{1}{n!} \sum_{\pi}  d_{\mathsf{TV}}(\pi^{-1}P,Q_{G,\lambda}) \nonumber \\
&\geq  d_{\mathsf{TV}}\left(\sum_{\pi}\tfrac{1}{n!}\pi^{-1}P, Q_{G,\lambda}\right) \,.\label{eq:expectation_minimization}
\end{align}
In the first step, we have used the fact that $\pi Q_{G,\lambda} \stackrel{d}{=} Q_{\pi G,\lambda}$ (due to relabeling of vertices).  In the third step we have used Jensen's inequality for the convex function $d_{\mathsf{TV}}$.
Clearly, the distribution $\hat{P} := \frac{1}{n!}\sum_{\pi}\pi^{-1}P$ is permutation invariant. 

If there is a unique optimal distribution $P_0$ for the adversary, we conclude that it must be a permutation invariant distribution. Some of the key features of the problem above are:  There is only one sample available, the underlying network structure and model is known and the adversary, who is agnostic to the network structure, comes up with permutation invariant data to mimic the data from the network. The considerations above justify our hypothesis testing model in Section~\ref{sec:ising_model_result} where the true network data is taken to be from an Ising model.

\section{Notation and Definitions}\label{s:notation}
$\mathbb{E}_p f$ denotes the expectation with respect to the probability measure $p$. For any two probability measures $\mu$ and $\nu$ over $\mathbb{R}$, we denote the Kolmogorov-Smirnoff distance as $d_{\mathsf{KS}}(\mu,\nu) := \sup_{x_0 \in \mathbb{R}}|\mu(\{x:x \leq x_0\} ) - \nu(\{x: x \leq x_0\})|\,.$
Let $\mathsf{Lip}_1(\mathbb{R})$ be the class of all $1$-Lipschitz real-valued functions over $\mathbb{R}$. For $\mu$ and $\nu$ probability measures over $\mathbb{R}$, the Wasserstein distance is defined as:
$d_{\mathsf{W}}(\mu,\nu) = \sup_{f \in \mathsf{Lip}_1(\mathbb{R})} \mathbb{E}_{\mu}f -\mathbb{E}_{\nu}f$.
For any random variable $X$, let $\mathcal{L}(X)$ be the probability law of $X$. Let $\Phi(x)$ denote the standard normal cumulative distribution function.

\subsection{Ising Model}
The \emph{interaction matrix} $J$ is a real-valued symmetric $n \times n$ matrix with zeros on the diagonal and the \emph{external field} is a real number $h$. Define the Hamiltonian $\mathcal{H}_{J,h}: \{-1,1\}^n \to \mathbb{R}$ by $\mathcal{H}_{J,h}(x) = \frac{1}{2}x^{\intercal}Jx + h\left(\sum_i x_i\right)$. Construct the graph $G_J = ([n],E_J)$ with $(i,j) \in E_J$ iff $J_{ij} \neq 0$.  An Ising model over graph $G_J$ with interaction matrix $J$ and external field $h$ is the probability measure $\pi$ over $\{-1,1\}^n$ such that $\pi(x) \propto \exp{(H_J(x))}$.

For any simple graph $G = ([n],E)$ there is an associated symmetric $n\times n$ adjacency matrix $\mathcal{A}(G) := (\mathcal{A}_{ij})$, where
$\mathcal{A}_{ij} = 
1$ if $(i,j) \in E$ and 
$\mathcal{A}_{ij} = 0$ otherwise.

 Let $K_n$ be the complete graph on $n$ nodes.
The \emph{Curie-Weiss model} at inverse temperature $\beta^{\mathrm{CW}} > 0$ and external field $h^{\mathrm{CW}}$ is the Ising model with interaction matrix $\frac{\beta^{\mathrm{CW}}}{n}\mathcal{A}(K_n)$. It can be easily shown this corresponds to the distribution $p(x) \propto e^{\frac{\beta^{\mathrm{CW}}}{2}nm^2 + nh^{\mathrm{CW}}m}$, where $m = m(x) = \frac{1}{n}\sum_{i=1}^n x_i$ is called the magnetization. 
The Curie-Weiss model is an unstructured/mean field model. It is permutation invariant and assigns the same probability to states with the same magnetization $m(x)$.

We will compare the above model to the Ising model on a $d$-regular graph $G_d = ([n],E_d)$ (i.e., every node has degree $d$).  For a given inverse temperature $\beta^{\mathrm{dreg}}$, we consider the Ising model with interaction matrix $\beta^{\mathrm{dreg}}\mathcal{A}(G_d)$ and external field $h^{\mathrm{dreg}}$. We shall call this `$d$-regular Ising model'.

\begin{remark}
The Curie-Weiss model is well studied and it can be shown that it exhibits non-trivial behavior when $\beta^{\mathrm{CW}} = \Theta(1)$. It undergoes a phase transition when $\beta^{\mathrm{CW}} = 1$, below which pairwise correlations are $O(\frac{1}{n})$ and when $\beta^{\mathrm{CW}} > 1$ they are $\Theta(1)$. The pairwise correlations tend to $1$ (maximum possible value) as $\beta^{\mathrm{CW}} \to \infty$. As considered in Section~\ref{sec:ising_model_result}, $\beta^{\mathrm{CW}} \leq \beta_{\mathsf{max}} $ for fixed $\beta_{\mathsf{max}}$ is a natural choice of Curie-Weiss models. 

The non-trivial regime for the d-regular Ising model is $\beta^{\mathrm{dreg}}_n = \theta(\frac{1}{d})$. As shown in Section~\ref{sec:ising_model_result}, our testing works as long as $\beta^{\mathrm{dreg}}_n \gg \frac{1}{\sqrt{nd}}$ which includes the regime of interest. 
\end{remark}

\subsection{Exponential Random Graph Model}
The Erd\H{o}s-R\'enyi random graph model $G(n,p)$ for $p \in [0,1]$ is the distribution of simple graphs on $n$ vertices such that each edge is included independently with probability $p$.

Consider fixed finite simple graphs $\{ H_i \}_{i=1}^{K}$, such that $H_1$ is the graph with two vertices and a single edge. Let $\beta \in \mathbb{R}\times\left(\mathbb{R}^{+}\right)^{K-1}$. Given a graph $G$ over $n$ vertices, define $N_{i}(G)$ to be the number of edge preserving isomorphisms from $H_i$ into $G$ (i.e, no. of subgraphs of $G$ (not necessarily induced), which are isomorphic to $H_i$). In particular $N_1(G)$ is twice the number of edges in $G$. Let $v_i$ be the number of vertices in $H_i$.  In the following definition of Exponential Random Graph Model (ERGM), we follow the convention in \cite{bhamidi2011mixing} to allow the values of $N_i(G)$ to be of the same order of magnitude to allow non-trivial behavior.

We construct the Hamiltonian $\mathcal{H}_{\beta}(G) = \sum_{i=1}^{K}\beta_i \frac{N_i(G)}{n^{v_i -2}}$. Consider the probability distribution $\nu(\cdot)$ over the set of simple graphs over $n$ vertices such that $\nu(G) =\frac{ e^{\mathcal{H}_{\beta}(G) }}{Z(\beta)}$ where $Z(\beta)$ is the normalizing factor. We call the distribution $\nu(\cdot)$ to be the exponential random graph model $\mathrm{ERGM}(\beta)$. We note that when $\beta_i =0$ for $i \geq 2$, $\mathrm{ERGM}(\beta)$ is the same as $G(n,\frac{e^{2\beta_1}}{1+e^{2\beta_1}})$. Rougly speaking, $\mathrm{ERGM}$ is like $G(n,p)$ but it favors the occurrence of certain subgraphs. Therefore, $G(n,p)$ is the mean field model and $\mathrm{ERGM}$ is the structured model.
\begin{remark}
In this paper, we take $K=2$ and fix $H_2$ to be the wedge graph \hbox{(\shape{\Vshape})}.
\end{remark}
\subsection{Problem Formulation}
We formulate our problem as a minimax hypothesis testing problem:

$H_0$: Data is from some mean field model with parameter $\gamma \in \Gamma$. We denote the corresponding distributions over the state space $\Omega$ by $P_{\gamma}$.

$H_1$ : Data is from a structured model with unknown parameters $\lambda \in \Lambda $. We denote the corresponding distributions over the state space $\Omega$ by $Q_{\lambda}$.

We take a statistical test $\mathcal{T}$ to be a decision function $\mathcal{D}_{\mathcal{T}}:\Omega \to \{H_0,H_1\}$. Let $p_1(\gamma, \mathcal{T}):= \mathbb{P}(\mathcal{D}_{\mathcal{T}}(\hat{X}) = H_1| \hat{X} \sim P_{\gamma})$ and $p_2(\lambda,\mathcal{T}) := \mathbb{P}(\mathcal{D}_{\mathcal{T}}(\hat{X}) = H_0| \hat{X} \sim Q_{\lambda})$. We take the risk of the test $\mathcal{T}$ to be worst case Bayesian probability of error:

$$R(\mathcal{T}) = \sup_{\gamma \in \Gamma}\sup_{\lambda \in \Lambda} \max(p_1(\gamma, \mathcal{T}),p_2(\lambda, \mathcal{T}))$$

We elucidate our result with the Ising case over $n$ variables: We fix $$\Gamma = \{(\beta^{\mathrm{CW}},h^{\mathrm{CW}}): 0\leq \beta^{\mathrm{CW}} \leq \beta_{\mathsf{max}}, |h^{\mathrm{CW}}| \leq h_{\mathsf{max}}\} $$ and the corresponding distributions to be Curie-Weiss models at inverse temperature $\beta^{\mathrm{CW}}$ and external field $h^{\mathrm{CW}}$. There are two cases to be considered:
\begin{enumerate}

\item Case 1:
Let $L_n$ be any positive sequence which diverges to infinity.
$$\Lambda_n = \{(\beta^{\mathrm{dreg}}_n,h) : \beta^{\mathrm{dreg}}_n \geq \frac{L_n}{\sqrt{nd}}, |h^{\mathrm{dreg}}| \leq h_{\mathsf{\max}}\}$$
 We show that in this case, for every $n$ there exists a test $\mathcal{T}(L_n,\beta_{\mathsf{max}}, h_{\mathsf{max}})$ such that $R(\mathcal{T}(L_n,\beta_{\mathsf{max}}, h_{\mathsf{max}})) \to 0$.
we explicitly construct this test by considering an elementary hypothesis test in Section~\ref{sec:abstract_result} which compares a specific $P_{\gamma}$ and $Q_{\lambda}$ and extend this to the composite case by proving that the statistical test considered doesn't actually look at the parameters $\gamma$ and $\lambda$. 
\item
Case 2: 
Let $L_n$ be any positive sequence which diverges to infinity.
$$\Lambda_n = \{(\beta^{\mathrm{dreg}}_n,h) : \beta^{\mathrm{dreg}}_n = \frac{1}{L_n\sqrt{nd}}, |h^{\mathrm{dreg}}| \leq h_{\mathsf{max}}\}$$
We show that in this case, for every sequence of tests $\{\mathcal{T}_n\}$, $$ \liminf_{n\to \infty} R(\mathcal{T}_n) \geq \frac{1}{2} $$ - which is the same as random labeling w.p $\frac{1}{2}$. To prove this, we show that for every $\lambda = (\beta^{\mathrm{dreg}}_n,h) \in \Lambda_n$ in this set, we can find $\gamma \in \Gamma$ such that: $$d_{\mathsf{TV}}(P_{\gamma},Q_{\lambda}) \to 0$$
Since $\max(p_1(\gamma, \mathcal{T}),p_2(\lambda, \mathcal{T})) \geq \frac{1}{2}(1-d_{\mathsf{TV}}(P_{\gamma},Q_{\lambda}))$ we conclude the result.
\end{enumerate}

\subsection{Intuition behind the Comparison Result}
\label{subsec: intuition}
Consider Ising model $q(\cdot)$ with interaction matrix $\beta^{\mathrm{dreg}} B$ ($\beta^{\mathrm{dreg}}$ unknown) and the Curie-Weiss model $p(\cdot)$ (at an unknown temperature $\beta^{\mathrm{CW}}$). The measure $q(\cdot)$ assigns higher probability to states with higher value of $x^{\intercal}Bx$, so a natural idea for  distinguishing between $p$ and $q$ would be to check if $x^{\intercal}Bx$ has a large value. However, the inverse temperature parameters are unknown, which implies that we can have the same expected value for the statistic under both the hypotheses (for some choice of temperature parameters). 

Instead, we exploit the symmetry in the Curie-Weiss model to overcome this drawback.
Let $\Omega_n = \{-1,1\}^n$ and consider the magnetization function $m: \Omega_n \to [-1,1]$ given by $m(x) = \frac{\sum_{i=1}^n x_i}{n}$.  Let $A_{m_0} = \{x\in \Omega_n : m(x) = m_0\}$ for $m_0 \in \{-1,-1+\frac{2}{n},\dots,1\} =: M_n$. We can partition $\Omega_n$ as: $\Omega_n = \cup_{m_0 \in M_n} A_{m_0}$.

For the Curie-Weiss model $p(\cdot)$ states with the same magnetization have the same probability. Therefore $p(\cdot)$ gives the uniform distribution over the set $A_{m_0}$. This continues to be case irrespective of the inverse temperature and external field, which mitigates our initial problem. The distribution $q(\cdot)$, given the magnetization $m_0$, assigns most of the probability to states $x$ with large values of $x^{\intercal}Bx$. We first prove a central limit theorem for $g(x):= x^{\intercal}Bx$ when $x$ is drawn uniformly from the set $A_{m_0}$. Then we show that the event $$\frac{g(x) - \mathbb{E}_p [g(x)| x\in A_{m_0}]}{\sqrt{\mathrm{var}_p(g(x)|x\in A_{m_0})}} \geq T$$ has a small probability under $p(\cdot|x\in A_{m_0})$ for large values of $T$, but has a large probability under $q(\cdot)$ because it favors larger values of $g(x)$. This gives us a distinguishing statistic for large enough inverse temperature $\beta^{\mathrm{dreg}}$. 

Similarly, $\mathsf{ERGM}(\beta_1,\beta_2)$ favors the appearance of $V$ subgraphs \hbox{(\shape{\Vshape})} when compared to $G(n,p)$. But, the expected number of \hbox{(\shape{\Vshape})}  subgraphs can be made equal by increasing $p$. To overcome this disadvantage, we exploit the symmetry in $G(n,p)$ - i.e, it assigns the same probability to all graphs with the same number of edges. So, conditioned on the number of edges in the sample graph, we check if the number of \hbox{(\shape{\Vshape})} subgraphs are disproportionately large. We proceed with the exact same framework as the Ising model for this, by proving a central limit theorem for the number of \hbox{(\shape{\Vshape})} subgraphs when the graphs have a constant number of edges.
\section{Abstract Result}
\label{sec:abstract_result}
We consider a sequence of probability spaces $(\Omega_n,\mathcal{F}_n,p_n)$, $n \in \mathbb{N}$. Consider a $\mathcal{F}_n$ measurable, real valued function $g_n$ such that $\mathbb{E}_{p_n} [e^{\beta_n g_n}] < \infty$ for all $\beta_n \in \mathbb{R}$ and define measure $q_n$ using Radon-Nikodym derivative as:
  $$\frac{dq_n}{dp_n} = \frac{e^{\beta g_n}}{\mathbb{E}_{p_n}[e^{\beta_n g_n}]}$$
  
We try to compare the distributions $p_n$ and $q_n$ in the total variation sense. We shall use the notation defined in the following discussion of the abstract result even when dealing with specific examples. Consider the following conditions:
\begin{enumerate}
\item[\textbf{C1}]
For some finite index set $M_n$ such that $|M_n| = M(n) \in \mathbb{N}$, we can partition $\Omega = \cup_{m\in M_n}A_m$ with disjoint sets $A_m$ such that $p_n(A_m) > 0\ \forall \ m \in M_n$.
  \label{partition}
 \item[\textbf{C2}]
 For a set $S_n \subset M_n$,  $$p_n(\cup_{m\in S_n} A_m) \geq 1 - \alpha_n$$ for some sequence $\alpha_n \to 0$.
\label{high_probability}
 \item[\textbf{C3}]
   Let $p^{(m)}$ be the probability measure over $A_m$ defined by $p^{(m)}(A) := \frac{p_n(A)}{p_n(A_m)} \ \forall \ A \subset A_m $ and $A \in \mathcal{F}_n$. It is the projection of the measure $p$ over the set $A_m$. Let $X_m \sim p^{(m)}$ and $X \sim p_n$. Let  $e_m(g_n) := \mathbb{E}[g_n(X_m)] $ and $\sigma^2_m(g_n) := \mathrm{var} \left[{g_n(X_m)}\right]$. For all $m, m^{\prime} \in S_n$, $$ 0< c \leq \frac{\sigma_m^2(g_n)}{\sigma_{m^{\prime}}^2(g_n)} \leq C$$ for some constants $c,C$ independent of $n$. We let $\sigma_n$ be any sequence such that $c\sigma_m(g_n) \leq \sigma_n\leq C\sigma_m(g_n)$  for some absolute constants $c$ and $C$ for every $m \in S_n$. Although $\beta_n$ can be a parameter in $g_n$, $g_n(x)-e_m(g_n)$ doesn't depend on $\beta_n$ whenever $x \in A_m$
   \label{partition_regularity}
  \item[\textbf{C4}] There is a sequence $\tau_n \to 0$ such that
  $$\sup_{m \in S_n} d_{\mathsf{KS}}\left(\mathcal{L}\left(\tfrac{g(X_m) - e_m(g_n)}{\sigma_m}\right),\mathcal{N}(0,1)\right) < \tau_n\,.$$  \label{partition_central_limit}
  \item[\textbf{C5}]
Condition Let $X \sim p_n$. \textbf{C3} holds, $\mathrm{var}(g_n(X)) = O(\sigma^2_n)$ and 
  $$\log{\mathbb{E}\left[e^{\beta \left(g_n(X)- \mathbb{E}g_n(X)\right)}\right]} \leq \frac{C\beta^2 \sigma_n^2}{1-|\beta| D\sigma_n}$$
  for all $\ |\beta| < \frac{1}{D\sigma_n}$  for absolute constants $C,D$ independent of $n$.

  \label{partition_concentration}
\end{enumerate}
\begin{remark}
In condition \textbf{C4}, we can relax the convergence to normal distribution
by considering convergence to a fixed distribution with a strictly positive tail. We have considered the standard normal distribution for the sake of clarity and since CLTs are ubiquitous and sufficient for the examples considered in this paper.
\end{remark}
\begin{remark}
We note that the function $g_n$ can have $\beta_n$ as a parameter but, condition \textbf{C3} requires that $g_n(x) - e_m(x)$ doesn't depend on $\beta_n$ whenever $x \in A_m$. Therefore, the conditional variances don't depend on the value of $\beta_n$. A trivial example is: $g_n(x) = l(x)+ \beta_n m(x)$. Other examples satisfying these conditions are given in Sections~\ref{sec:ising_model_result} and~\ref{sec:ergm_result}.  
\end{remark}

Define the function $m(x)$ such that $m(x) = m_0$ iff $x \in A_{m_0}$. We consider the following elementary binary hypothesis test between two distributions for the data $X \in \Omega_n$ and then extend this test to the composite case in Sections~\ref{sec:ising_model_result} and~\ref{sec:ergm_result}.
\begin{align*}H_0&: X \sim p_n
&\\ H_1&: X \sim q_n\quad \textrm{for some}\quad \beta_n\,.
\end{align*}
 The value of $\beta_n$ may be unknown.

  We call the following test to decide between $H_0$ and $H_1$ the canonical test with parameter $T \geq 0$ and a real-valued function $\kappa(\cdot)$ over the state space:
 \begin{definition}[Canonical Test]
 \label{def:canonical_test}
Given a sample $X$, we define the decision function $\mathcal{D}^{\mathsf{can}}(X) \in \{H_0,H_1\}$:
\begin{enumerate}
\item if $m(X) \notin S_n$ then $\mathcal{D}^{\mathsf{can}}(X) = H_1$
\item if $m(X) \in S_n$ and $\frac{\kappa(X)- e_{m(X)}(\kappa)}{\sigma_{m(X)}(\kappa)} \geq  T$ then $\mathcal{D}^{\mathsf{can}}(X) = H_1$
\item otherwise $\mathcal{D}^{\mathsf{can}}(X) = H_0$
\end{enumerate}
The statistical test with decision function $\mathcal{D}^{\mathsf{can}}$ is the canonical statistical test $\mathcal{T}^{\mathsf{can}}(T,\kappa)$.
\end{definition}
 We note that the canonical test depends only on the function $\kappa$, the set $S_n$ and the conditional measures $p^{(m)}$. A natural choice of $\kappa$ is: $\kappa = g_n$. We show the following result for this choice of $\kappa$. Our metric of comparison will the following `probability of error' for any test $\mathcal{T}$ with decision function $\mathcal{D}$:
$$p_{\mathsf{error}} = \max(\mathbb{P}[\mathcal{D}(X) = H_0| X\sim H_1], \mathbb{P}[\mathcal{D}(X) = H_1| X\sim H_0])\,.$$

\begin{theorem}
\label{main_theorem} Assume w.l.o.g that $\beta_n >0$. We have the following results.
\begin{enumerate}
\item
 If the conditions \textbf{C1},\textbf{C2},\textbf{C3}, and \textbf{C4}, hold then:
\begin{equation}
\lim_{n\to \infty }d_{\mathsf{TV}}(p_n,q_n) = 1 \quad \text{if  $\beta_n \sigma_n \to \infty$}
\end{equation}
Moreover, if it is known that $\beta_n\sigma_n \geq L_n$ for a known sequence $L_n \to \infty$ ($\beta_n$ being possibly unknown), then the canonical test $\mathcal{T}^{\mathsf{can}}(T_n,g_n)$ can distinguish between $p_n$ and $q_n$ with high probability with a single sample for a particular choice $T_n \to \infty$ depending only on $L_n $ and $\tau_n$. The probability of type 1 and type 2 errors can be bounded above by a function of $\alpha_n$, $T_n$ and $L_n$ tending to $0$.

\item
If condition \textbf{C5} holds, then
\begin{equation}
\lim_{n\to \infty }d_{\mathsf{TV}}(p_n,q_n) = 0 \quad \text{if  $\beta_n \sigma_n \to 0$}
\end{equation}
\end{enumerate}

\end{theorem}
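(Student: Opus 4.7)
The plan is to handle the two parts separately. For Part 1, I analyze the two error probabilities of the canonical test $\mathcal{T}^{\mathsf{can}}(T_n, g_n)$; the total variation conclusion $d_{\mathsf{TV}}(p_n, q_n) \to 1$ then follows from the elementary bound $d_{\mathsf{TV}}(p_n, q_n) \geq 1 - (\text{Type I}) - (\text{Type II})$ applied to the event $\{\mathcal{D}^{\mathsf{can}} = H_1\}$. For Part 2, I directly upper-bound $d_{\mathsf{TV}}(p_n, q_n)$ via the chi-square divergence, using the sub-exponential moment estimate in \textbf{C5}.

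For the Type I error, I split on whether $m(X) \in S_n$. By \textbf{C2}, $p_n(m(X) \notin S_n) \leq \alpha_n$; on $\{m(X) = m\}$ with $m \in S_n$, the Kolmogorov--Smirnov estimate \textbf{C4} yields $p^{(m)}(Y_m \geq T_n) \leq 1 - \Phi(T_n) + \tau_n$, where $Y_m := (g_n - e_m(g_n))/\sigma_m$. Hence the total Type I error is at most $\alpha_n + 1 - \Phi(T_n) + \tau_n$, which vanishes as long as $T_n$ grows slowly. For the Type II error, condition \textbf{C3} is essential: because $g_n - e_m(g_n)$ does not depend on $\beta_n$ on $A_m$, the conditional law of $Y_m$ under $q_n$ is exactly the exponential tilt of its $p^{(m)}$-law by $e^{\alpha_m y}$, with $\alpha_m := \beta_n \sigma_m$. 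Writing $F_m$ for the $p^{(m)}$-CDF of $Y_m$,
\begin{equation*}
\mathbb{P}_{q_n}(Y_m \leq T_n \mid A_m) = \frac{\int_{-\infty}^{T_n} e^{\alpha_m y}\, dF_m(y)}{\int_{-\infty}^{\infty} e^{\alpha_m y}\, dF_m(y)} \leq \frac{e^{\alpha_m T_n}}{e^{\alpha_m (T_n+1)}\bigl(1 - \Phi(T_n+1) - \tau_n\bigr)} = \frac{e^{-\alpha_m}}{1 - \Phi(T_n+1) - \tau_n}\,,
\end{equation*}
where the numerator is bounded crudely and the denominator is lower-bounded by integrating over $[T_n+1,\infty)$ and invoking \textbf{C4} at $T_n+1$. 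By \textbf{C3}, $\alpha_m \geq c \beta_n \sigma_n \geq c L_n \to \infty$, so any $T_n$ satisfying $T_n \to \infty$, $T_n^2 = o(L_n)$, and $T_n^2 = o(\log(1/\tau_n))$ (for instance $T_n = \min(L_n^{1/4}, \tfrac{1}{2}\sqrt{\log(1/\tau_n)})$) drives this ratio uniformly to zero over $m \in S_n$. The overall Type II error is bounded by $\sup_{m \in S_n} \mathbb{P}_{q_n}(Y_m \leq T_n \mid A_m)$, so it too vanishes.

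For Part 2, I combine $d_{\mathsf{TV}}(p_n, q_n) \leq \tfrac{1}{2} \sqrt{D_{\chi^2}(q_n \,\|\, p_n)}$ with the explicit identity
\begin{equation*}
D_{\chi^2}(q_n \,\|\, p_n) = \exp\bigl(\psi(2\beta_n) - 2 \psi(\beta_n)\bigr) - 1\,, \qquad \psi(\beta) := \log \mathbb{E}_{p_n}\bigl[e^{\beta (g_n - \mathbb{E}_{p_n} g_n)}\bigr]\,.
\end{equation*}
Jensen's inequality gives $\psi(\beta_n) \geq 0$, and \textbf{C5} at $2\beta_n$ yields $\psi(2\beta_n) \leq 4C \beta_n^2 \sigma_n^2 / (1 - 2|\beta_n| D \sigma_n)$, valid once $\beta_n \sigma_n < 1/(2D)$ (eventually, since $\beta_n \sigma_n \to 0$). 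Thus $\psi(2\beta_n) - 2\psi(\beta_n) \leq 8C \beta_n^2 \sigma_n^2 \to 0$, so $D_{\chi^2}(q_n \,\|\, p_n) \to 0$ and $d_{\mathsf{TV}}(p_n, q_n) \to 0$.

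The main obstacle is calibrating $T_n$ in the Type II argument: it must grow fast enough to kill the Gaussian tail in the Type I bound, yet slowly enough that $\alpha_m \gtrsim L_n$ dominates both the $(T_n+1)^2/2$ lost in the Gaussian tail of the denominator lower bound and ensures $1-\Phi(T_n+1)$ is not swamped by the CLT error $\tau_n$. The shift $\Delta = 1$ in the denominator is the key trick that converts the ratio of tail integrals into a clean $e^{-\alpha_m}$ factor; without it the numerator and denominator bounds would not balance.
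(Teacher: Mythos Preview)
Your proof is correct. Part~1 follows the paper's argument almost verbatim: the Type~I bound is identical, and for the Type~II bound both you and the paper lower-bound the denominator of the tilted integral by restricting to a region to the right of the threshold (you shift by~$1$, the paper shifts by~$T$, getting $e^{-\alpha_m}$ versus $e^{-T\alpha_m}$); either shift works since $\alpha_m \gtrsim L_n \to \infty$. Your calibration of $T_n$ is slightly different from the paper's choice $1-\Phi(2T_n)=\tau_n+e^{-cL_n}$, but equally valid.

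Part~2 is where you genuinely diverge from the paper. The paper bounds $d_{\mathsf{TV}}$ directly by working on the set $\{dq_n/dp_n<1\}$, writing $1-e^{-|\beta_n g_n-\log Z|}$, using Jensen plus \textbf{C5} to control $|\log Z-\beta_n\mathbb{E}g_n|$, and then Chebyshev on $|g_n-\mathbb{E}g_n|$ via the variance clause of \textbf{C5}. Your route through the $\chi^2$-divergence,
\[
D_{\chi^2}(q_n\|p_n)=\exp\bigl(\psi(2\beta_n)-2\psi(\beta_n)\bigr)-1\le \exp\!\left(\tfrac{4C\beta_n^2\sigma_n^2}{1-2D|\beta_n|\sigma_n}\right)-1,
\]
is both shorter and uses strictly less of the hypothesis: you never invoke the variance bound in \textbf{C5}, only the MGF estimate (applied at $2\beta_n$, which is eventually admissible since $\beta_n\sigma_n\to0$). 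The paper's approach has the mild advantage of being constructive---it identifies the near-optimal distinguishing event explicitly---but for the stated conclusion your argument is cleaner.
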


We defer the proof to Appendix~\ref{sec: main_theorem_proof}. The idea behind the first part of the proof is described in Section~\ref{subsec: intuition}. To understand the proof of the second part of the theorem, we take $\Omega$ to be a finite space. Then, $q(x) = p(x)\frac{e^{\beta_n g(x)}}{\mathbb{E}_p e^{\beta_n g}}$. The Condition~\textbf{C5} along with Jensen's inequality implies that whenever $\beta_n\sigma_n \to 0$, $$e^{\beta_n \mathbb{E}_p g}\leq \mathbb{E}_p e^{\beta_n g} \leq e^{\beta_n \mathbb{E}_p g}e^{\frac{C\beta_n^2\sigma_n^2}{1-D|\beta_n|\sigma_n}} = (1+o(1))e^{\beta_n \mathbb{E}_p g}$$
Therefore, $q(x) = (1- o(1))p(x)e^{\beta_n(g(x) -\mathbb{E}_p(g))}$. We use Chebyshev inequality to show that $\beta_n (g(x) - \mathbb{E}_p(g))$ is small most of the time i.e,  $q(x) = (1\pm o(1))p(x)$ with high probability. This proves that the total variation distance converges to zero.

\section{Testing Ising Model Structure}
\label{sec:ising_model_result}
We intend to test between the following hypotheses for data $\hat{X} \in \{-1,1\}^n$:
\begin{enumerate}
\item
$H_0$ : The data is generated by a Curie-Weiss model at an unknown inverse temperature $0 \leq  \beta^{\mathrm{CW}}(n) \leq \beta_{\textrm{max}}$ and external field $|h^{\mathrm{CW}}| \leq h_{\textrm{max}} < \infty$
\item 
$H_1$ : The data is generated by an Ising model on a known d-regular graph at an unknown inverse temperature $0\leq\beta^{\mathrm{dreg}}_n < \infty$ and arbitrary external field $h^{\mathrm{dreg}} \in \mathbb{R}$ such that $(\beta^{\mathrm{dreg}}_n,h^{\mathrm{dreg}}) \in \Lambda_{\mathsf{Ising}}$
\end{enumerate}

We intend to apply Theorem~\ref{main_theorem} to prove Theorem~\ref{thm:ising_threshold}. For convenience, we use the notation used in the conditions for Theorem~\ref{main_theorem}. Let $x \in \Omega := \{-1,1\}^n$. We take $p_n$ to be Curie-Weiss model at inverse temperature $\beta^{\mathrm{CW}} \leq \beta_{\mathsf{max}}$ and external field $h^{\mathrm{CW}}$ such that $|h^{\mathrm{CW}}|\leq h_{\mathsf{max}} < \infty$ i.e, 
$$p_n(x) \propto e^{\frac{n}{2}\beta^{\mathrm{CW}} m^2 + nh^{\mathrm{CW}}m(x)}$$
Where $m := m(x) = \frac{1}{n}\sum_i x_i$. Let $G$ be any known $d$-regular graph over $n$ vertices with adjacency matrix $A$ and $d=o(n)$. We take $q$ to be the Ising model with interaction matrix $\beta^{\mathrm{dreg}}_n A$ and external field $h$ such that $\beta^{\mathrm{dreg}}_n > 0$ and $h^{\mathrm{dreg}} \in \mathbb{R}$. That is,
$$q_n(x) \propto e^{\frac{\beta^{\mathrm{dreg}}_n}{2}x^{\intercal}Ax + nh^{\mathrm{dreg}}m(x)}$$

We take $g_n(x) = \frac{1}{2}x^{\intercal}Ax - \frac{n}{2}\frac{\beta^{\mathrm{CW}}}{\beta^{\mathrm{dreg}}_n} m^2 + \frac{nd}{2(n-1)\beta^{\mathrm{dreg}}_n} + \frac{n(h^{\mathrm{dreg}} -h^{\mathrm{CW}})}{\beta^{\mathrm{dreg}}_n}m(x)$. Therefore, $$q_n(x) =  \frac{p_n(x)e^{\beta^{\mathrm{dreg}}_n g_n(x)}}{\mathbb{E}_{p_n} [e^{\beta^{\mathrm{dreg}}_n g(x)}]}$$
We take $M_n =\{-1,-1+\frac{2}{n},\dots, 1-\frac{2}{n},1\}$. Given $m_0 \in M_n$, define $A_{m_0} = \{x \in \Omega : m(x) = m_0\}$. Clearly, the subsets $A_{m_0}$ partition the set $\Omega$ and $A_{m_0} = \{x: |\{i: x_i = 1\}| = \frac{1+m_0}{2}n\}$. Magnetization concentration of Curie-Weiss model is well studied (c.f. \cite{ellis2007entropy}). The magnetization for the Curie Weiss model concentrates around the roots of the equation $m^{*} = \tanh{(\beta^{\mathrm{CW}}m^{*} + h^{\mathrm{CW}})}$. Since $\beta^{\mathrm{CW}} \leq \beta_{\mathsf{max}}< \infty$ and $|h^{\mathrm{CW}}| < h_{\mathsf{max}} < \infty$ we can show that for some $\epsilon >0$ and constants $B, C(\beta_{\mathsf{max}},h_{\mathsf{max}}) > 0$  depending only on $\beta_{\mathsf{max}}$ and $h_{\max}$,
 $$p_n\left(m(x) \in [-1+\epsilon, 1-\epsilon]\right) \geq 1 - Be^{-C(\beta_{\mathsf{max}},h_{\mathsf{max}})n} =: 1-\alpha_n \,.$$  
Therefore, we let $$S_n = M_n \cap [-1+\epsilon, 1-\epsilon]\,.$$

\begin{remark}
We immediately note that the following important fact: Consider the canonical test for $H_0$ and $H_1$ given in Definition~\ref{def:canonical_test}.
Given a sample $\hat{X}$ with magnetization $\hat{m} = m(\hat{X})$, we can determine whether $m(\hat{X}) \in S_n $ without using $(\beta^{\mathrm{dreg}}_n,h^{\mathrm{dreg}})$ and $(\beta^{\mathrm{CW}},h^{\mathrm{CW}})$ since $S_n$ only depends on $\beta_{\mathsf{max}}$ and $h_{\mathsf{max}}$. 
Clearly, $p^{(\hat{m})}$ is the uniform measure over $A_{\hat{m}}$ irrespective of the value of $\beta^{\mathrm{CW}}$ and $h^{\mathrm{CW}}$. Let $X_{\hat{m}} \sim p^{(\hat{m})}$
 A simple calculation shows that:
 $$\frac{1}{2}\hat{X}^{\intercal}A\hat{X}  - \mathbb{E}\left[\frac{1}{2}X_{\hat{m}}^{\intercal}AX_{\hat{m}}\right]= g(\hat{X}) - e_{\hat{m}}(g)$$
Therefore, $\sigma^2_m := \mathrm{var}(g(X_m)) =\mathrm{var}\left(\frac{1}{2}X_m^{\intercal}AX_m\right) \,.$ We observe that neither of the quantities above depend on the values of the unknown parameters and the decision whether $\frac{g(\hat{X}) - e_{\hat{m}}(g)}{\sigma_{\hat{m}}(g)} \geq T$ is the same irrespective of their value. We define $\kappa_{\mathsf{Ising}}(\hat{X}) := \hat{X}^{\intercal}A\hat{X}$. By the considerations above, we conclude that: $\mathcal{T}(T_n,g_n) = \mathcal{T}(T_n, \kappa_{\mathsf{Ising}})$. 
\label{rem:parameter_independence_ising}
\end{remark}

By Theorem~\ref{cut_size_clt}, $\sigma_m = \Theta(\sqrt{nd})$ uniformly for all $m \in S_n$ and $$\sup_{m \in S_n} d_{\mathsf{KS}}\left(\mathcal{L}\left(\tfrac{g(X_m) - e_m(g)}{\sigma_m}\right),\mathcal{N}(0,1)\right) <  C(\epsilon)\sqrt[4]{\frac{d}{n}} =: \tau_n$$

\begin{theorem} \label{thm:ising_threshold}
Let $d = o(n)$ and $L_n$ be any positive sequence diverging to infinity.
\begin{enumerate}
\item If $\Lambda_{\mathsf{Ising}} = \{(\beta^{\mathrm{dreg}}_n,h^{\mathrm{dreg}}) : \beta^{\mathrm{dreg}}_n \geq \frac{L_n}{\sqrt{nd}}, |h^{\mathrm{dreg}}| \leq h_{\mathsf{\max}}\}$, the canonical test $\mathcal{T}(T_n,\kappa_{\mathsf{Ising}})$,  which depends only on $\beta_{\mathsf{max}}$, $h_{\mathsf{max}}$ and $L_n$ can distinguish $H_0$ and $H_1$ with high probability for some choice of $T_n(\beta_{\mathsf{max}},h_{\mathsf{max}},L_n) \to \infty$.
\item If $\Lambda_{\mathsf{Ising}} = \{(\beta^{\mathrm{dreg}}_n,h^{\mathrm{dreg}}) : \beta^{\mathrm{dreg}}_n = \frac{1}{L_n\sqrt{nd}},|h^{\mathrm{dreg}}| \leq h_{\mathsf{\max}}\} $, there is no statistical test which can distinguish $H_0$ and $H_1 $ with high probability using constant number of i.i.d. samples.
\end{enumerate}
\end{theorem}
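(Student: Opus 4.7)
The plan is to reduce Theorem~\ref{thm:ising_threshold} to the abstract Theorem~\ref{main_theorem}, whose hypotheses are essentially in place. Condition \textbf{C1} is immediate from the partition of $\{-1,1\}^n$ by magnetization; \textbf{C2} follows from the well-known exponential concentration of the Curie-Weiss magnetization around the roots of $m^{*}=\tanh(\beta^{\mathrm{CW}} m^{*}+h^{\mathrm{CW}})$, giving $S_n=M_n\cap[-1+\epsilon,1-\epsilon]$ with $\alpha_n=Be^{-Cn}$; conditions \textbf{C3} and \textbf{C4} are supplied by Theorem~\ref{cut_size_clt}, which yields $\sigma_n=\Theta(\sqrt{nd})$ and $\tau_n=C(\epsilon)(d/n)^{1/4}$. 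Because the tilting parameter is $\beta_n=\beta^{\mathrm{dreg}}_n$, the quantity $\beta_n\sigma_n$ is comparable to $\beta^{\mathrm{dreg}}_n\sqrt{nd}$, which is exactly the threshold appearing in the theorem statement.

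For part 1, I invoke the first half of Theorem~\ref{main_theorem}. In the regime $\beta^{\mathrm{dreg}}_n\sqrt{nd}\geq L_n\to\infty$, the theorem supplies a threshold $T_n=T_n(L_n,\tau_n)\to\infty$ such that the canonical test $\mathcal{T}^{\mathsf{can}}(T_n,g_n)$ has Type I error bounded by $\alpha_n+2(1-\Phi(T_n))+\tau_n$ and Type II error decaying with $L_n/T_n$. The crux for the \emph{composite} problem is Remark~\ref{rem:parameter_independence_ising}: although $g_n$ is defined in terms of the unknown parameters, the centered statistic $g_n(x)-e_{m(x)}(g_n)$ equals $\tfrac{1}{2}(x^{\intercal}Ax-\mathbb{E}[X_{m(x)}^{\intercal}AX_{m(x)}])$, and $\sigma_{m(x)}$ is the conditional standard deviation of $\tfrac{1}{2}X^{\intercal}AX$ on the Hamming slice $A_{m(x)}$; neither object sees $\beta^{\mathrm{CW}},h^{\mathrm{CW}},\beta^{\mathrm{dreg}}_n,h^{\mathrm{dreg}}$. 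Hence $\mathcal{T}^{\mathsf{can}}(T_n,g_n)=\mathcal{T}(T_n,\kappa_{\mathsf{Ising}})$ and the error bounds from Theorem~\ref{main_theorem} hold simultaneously for every parameter choice in $\Gamma\times\Lambda_{\mathsf{Ising}}$, giving the required uniform control of the minimax risk.

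For part 2, I will exhibit, for each $\lambda=(\beta^{\mathrm{dreg}}_n,h^{\mathrm{dreg}})\in\Lambda_{\mathsf{Ising}}$, an adversarial Curie-Weiss parameter $\gamma=(\beta^{\mathrm{CW}},h^{\mathrm{CW}})\in\Gamma$ for which condition \textbf{C5} holds. A short calculation using $\mathbb{E}[X_m^{\intercal}AX_m]=nd(nm^2-1)/(n-1)$ shows that the choice $\beta^{\mathrm{CW}}=\beta^{\mathrm{dreg}}_n\cdot nd/(n-1)$ and $h^{\mathrm{CW}}=h^{\mathrm{dreg}}$ kills all $m$-dependent pieces of $\mathbb{E}_{p^{(m)}}g_n$, so that $g_n-\mathbb{E}_{p_n}g_n$ becomes, up to an additive constant, the centered quadratic form $\tfrac{1}{2}(x^{\intercal}Ax-\mathbb{E}_{p_n}[x^{\intercal}Ax])$; moreover $\beta^{\mathrm{CW}}=\beta^{\mathrm{dreg}}_n\cdot nd/(n-1)\to 0$, so this adversary lies in $\Gamma$ for all large $n$. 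With the sub-exponential MGF bound of \textbf{C5} in hand for variance proxy $\sigma_n^2=\Theta(nd)$, and since $\beta_n\sigma_n\asymp 1/L_n\to 0$, Theorem~\ref{main_theorem} yields $d_{\mathsf{TV}}(p_n,q_n)\to 0$; the extension to any fixed number $k$ of i.i.d.\ samples follows from subadditivity $d_{\mathsf{TV}}(p_n^{\otimes k},q_n^{\otimes k})\leq k\,d_{\mathsf{TV}}(p_n,q_n)\to 0$, whence $\liminf_n R(\mathcal{T}_n)\geq\tfrac{1}{2}(1-k\,d_{\mathsf{TV}}(p_n,q_n))\to\tfrac{1}{2}$.

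The main obstacle is the sub-exponential concentration underlying \textbf{C5}. Standard Hanson-Wright or Stein-type estimates would furnish the correct order only if $x$ had independent coordinates, whereas the Curie-Weiss measure has correlated coordinates, and, more subtly, as the abstract emphasizes, conditioning on a Hamming slice \emph{reduces} the variance of $x^{\intercal}Ax$ below what product-measure tools predict. The route I plan is to take the sharp sub-exponential bound for quadratic forms on a Hamming sphere proved in Section~\ref{sec:burkholder_subexp}, control the dependence of the conditional cumulant generating function on $m$ uniformly for $m\in S_n$, and then average against the sub-Gaussian distribution of the Curie-Weiss magnetization; because our adversarial choice of $\beta^{\mathrm{CW}}$ makes the conditional mean $m$-independent, this averaging does not blow up the variance proxy. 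Once this piece is established, the deduction of Theorem~\ref{thm:ising_threshold} from Theorem~\ref{main_theorem} is mostly bookkeeping.
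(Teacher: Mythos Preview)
Your proposal is correct and follows essentially the same route as the paper: for part~1 you invoke Theorem~\ref{main_theorem} via Conditions~\textbf{C1}--\textbf{C4} and use Remark~\ref{rem:parameter_independence_ising} to pass from $\mathcal{T}(T_n,g_n)$ to the parameter-free test $\mathcal{T}(T_n,\kappa_{\mathsf{Ising}})$; for part~2 you pick the same adversarial Curie-Weiss parameters $\beta^{\mathrm{CW}}=\beta^{\mathrm{dreg}}_n\,nd/(n-1)$, $h^{\mathrm{CW}}=h^{\mathrm{dreg}}$, verify \textbf{C5} using the Hamming-sphere concentration of Section~\ref{sec:burkholder_subexp}, and apply the second half of Theorem~\ref{main_theorem}.

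One small simplification relative to your plan: the sub-exponential bound~\eqref{reverse_jensen} is proved in Section~\ref{sec:burkholder_subexp} uniformly over \emph{all} slice sizes $0\le l\le n$, not merely for $m\in S_n$. Consequently the paper does not need to ``average against the sub-Gaussian distribution of the Curie-Weiss magnetization'' for the MGF part of \textbf{C5}; once $e_m(g)$ is constant in $m$, one simply takes a supremum over $m$ inside the mixture and the bound~\eqref{reverse_jensen} applies directly. The magnetization large-deviations estimate is used only for the variance bound $\mathrm{var}_p(g)=O(nd)$, where the conditional variances $\sigma_m^2$ genuinely depend on $m$ and a crude bound on the tail slices is combined with exponential concentration. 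Your plan would work too, but this is a slightly cleaner bookkeeping. Finally, your subadditivity remark $d_{\mathsf{TV}}(p_n^{\otimes k},q_n^{\otimes k})\le k\,d_{\mathsf{TV}}(p_n,q_n)$ is the correct way to handle a constant number of i.i.d.\ samples; the paper's appendix proof states only the single-sample total variation convergence and leaves this routine extension implicit.
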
 
 We defer the proof to Appendix~\ref{sec:ising_proof}. The idea is to use Remark~\ref{rem:parameter_independence_ising} to conclude $\mathcal{T}(T_n,g_n) = \mathcal{T}(T_n, \kappa_{\mathsf{Ising}})(\hat{X})$ and then use Theorem~\ref{main_theorem} to conclude the result.

We note from the proof that the above the threshold, the distribution $p_n$ need not necessarily be the Curie-Weiss model. It can be any family of permutation invariant probability distribution such that $p_n(m(x) \in [\delta,1-\delta]) \to 1$ for some $\delta > 0$ and our proof for the success of our statistical test goes through. But, below the threshold, our method cannot prove the total variation bound required if $p_n(\cdot)$ is not Curie-Weiss. In this sense, the Curie-Weiss model is the optimal adversary.

\section{A Central Limit Theorem for Quadratic Forms over Hamming Sphere}
\label{sec:clt} 
In order to apply Theorem~\ref{main_theorem} to problems of interest, we would like to prove a central limit theorem with Berry-Esseen type bounds for quadratic forms over Hamming Spheres. Consider $\mathcal{S} = \{(x_1,...,x_n) \in \{-1,1\}^n: |\{i: x_i = 1\}| = sn \}$. That is, $\mathcal{S}$ is the Hamming sphere of radius $sn$ for a fixed $s \in (0,1)$. Let $X \sim \mathrm{unif}(\mathcal{S})$. Given a symmetric matrix $A$ with $0$ diagonals, we intend to prove a central limit theorem for the quadratic form $\mathcal{A}(X) = \frac{1}{2}X^{\intercal}AX$. The problem of limiting distributions has been well studied for quadratic forms of i.i.d random variables (see \cite{hall1984central}, \cite{rotar1979limit}, \cite{de1987central}, \cite{gotze2002asymptotic}). All their methods utilize the independence of the entries of the random vector which is not case in this scenario. We use Stein's method to prove the following result: 

\begin{theorem}
\label{cut_size_clt}
Let $d = o(n)$ and $A$ be the adjacency matrix of a $d$ regular graph. Let $0<\delta<s<1-\delta<1$ and $\sigma^2_s := \mathrm{var}(\mathcal{A}(X)) $ and $L = \frac{\mathcal{A}(X)-\mathbb{E}\mathcal{A}(X)}{\sigma_s}$. Then,
\begin{enumerate}
\item
 $\sigma_s^2 = 8nds^2(1-s)^2(1+ O(\frac{d}{n}))$ 
 \item
 $d_{\mathsf{KS}}(\mathcal{L}(L),\mathcal{N}(0,1)) \leq C\sqrt[4]{\frac{d}{n}}$
\end{enumerate}
$C$ depends only on $\delta$ and the bound $O\left(\frac{d}{n}\right)$ holds uniformly for all $s \in (\delta,1-\delta)$.
\end{theorem}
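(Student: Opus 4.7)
\emph{Strategy.} The plan is to prove both parts of Theorem~\ref{cut_size_clt} by Stein's method of exchangeable pairs, preceded by a direct moment computation for the variance. For the variance, write $\mathcal{A}(X) = \sum_{\{i,j\} \in E} X_iX_j$ and expand
\[
\mathrm{var}(\mathcal{A}(X)) = \sum_{e,e' \in E} \mathrm{cov}(X_iX_j,X_kX_l),
\]
splitting the sum over the three cases $|e\cap e'| \in \{0,1,2\}$. Using $d$-regularity (which gives $nd/2$ equal pairs, $\Theta(nd^2)$ shared-vertex pairs and $\Theta(n^2d^2)$ disjoint pairs) and the hypergeometric joint-moment formula for $\mathbb{E}[X_{i_1}\cdots X_{i_k}]$ on the Hamming sphere (obtained through the substitution $Y_i=(1+X_i)/2$), every class contributes a closed-form expression in $s,n,d$. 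The key feature is that on $\mathcal{S}$ the sum $\sum_i X_i = n(2s-1)$ is \emph{deterministic}, so the degree-one Hoeffding projection of $\mathcal{A}(X)$ vanishes; this collapses the \emph{a priori} $\Theta(nd^2)$ order of the variance to $\Theta(nd)$, and a careful bookkeeping of the leading terms gives $\sigma_s^2 = 8nds^2(1-s)^2(1+O(d/n))$.

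\emph{Exchangeable pair.} Construct $X'$ from $X$ by picking $(I,J)$ uniformly among the $n(n-1)$ ordered pairs of distinct coordinates and swapping $X_I \leftrightarrow X_J$; this preserves the uniform measure on $\mathcal{S}$, so $(X,X')$ is exchangeable. A direct expansion yields, on the event $\{X_I \neq X_J\}$,
\[
\mathcal{A}(X') - \mathcal{A}(X) = -2X_I(AX)_I - 2X_J(AX)_J + 4A_{IJ}X_IX_J,
\]
and $0$ on $\{X_I=X_J\}$. Averaging over $(I,J)$ and using that $\sum_i(AX)_i = d\sum_iX_i$ is a deterministic constant (by $d$-regularity and the Hamming-sphere constraint) together with $\sum_i X_i(AX)_i = 2\mathcal{A}(X)$, a short reorganization produces the Stein linear-regression identity
\[
\mathbb{E}\bigl[\mathcal{A}(X') - \mathcal{A}(X) \,\big|\, X\bigr] = -\tfrac{2}{n}\bigl(\mathcal{A}(X) - \mathbb{E}\mathcal{A}(X)\bigr),
\]
so $L$ and its analogue $L'$ formed from $X'$ satisfy $\mathbb{E}[L' - L \mid X] = -\lambda L$ with $\lambda = 2/n$.

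\emph{Berry--Esseen bound.} Now apply a standard Rinott--Rotar/Chatterjee Kolmogorov bound for exchangeable pairs:
\[
d_{\mathsf{KS}}(\mathcal{L}(L),\mathcal{N}(0,1)) \lesssim \frac{1}{\lambda}\sqrt{\mathrm{var}\bigl(\mathbb{E}[(L'-L)^2 \mid X]\bigr)} + \frac{1}{\lambda}\mathbb{E}\bigl|L'-L\bigr|^3.
\]
Because $\mathcal{A}(X')-\mathcal{A}(X)$ depends only on the two rows of $A$ indexed by $I,J$, we have the deterministic bound $|L'-L| \leq Cd/\sigma_s = O(\sqrt{d/n})$; together with $\mathbb{E}(L'-L)^2 = 2\lambda$ this yields $\tfrac{1}{\lambda}\mathbb{E}|L'-L|^3 = O(\sqrt{d/n})$. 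The conditional-variance term is the main obstacle: expanding the square gives that $\mathbb{E}[(\mathcal{A}(X')-\mathcal{A}(X))^2 \mid X]$ is an explicit degree-four polynomial in $X$ whose dominant piece is a constant multiple of $X^{\intercal}A^2X$ plus lower-order pieces. Controlling $\mathrm{var}(X^{\intercal}A^2X)$ on $\mathcal{S}$ requires redoing the moment computation of the first step, but with the matrix $A^2$ (which has row sums $d^2$ and $O(d^2)$ nonzero entries per row) in place of $A$; once again the Hamming-sphere constraint annihilates the degree-one Hoeffding component and produces a bound of the precise order needed. Plugging the resulting estimate together with $\lambda = 2/n$ and $\sigma_s^2 = \Theta(nd)$ into the Berry--Esseen inequality delivers the claimed rate $O((d/n)^{1/4})$, uniformly in $s \in (\delta, 1-\delta)$.
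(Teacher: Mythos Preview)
Your overall strategy matches the paper's: build the swap-two-coordinates exchangeable pair, verify the linear regression, and feed the two terms $\frac{1}{\lambda}\sqrt{\mathrm{var}\,\mathbb{E}[(L'-L)^2\mid X]}$ and $\frac{1}{\lambda}\mathbb{E}|L'-L|^3$ into a Stein bound. Two small corrections first. The regression coefficient is $\lambda = 4/n$ (or $4(n-1)/n^2$ if one allows $I=J$, as the paper does), not $2/n$. And the displayed inequality you cite as a ``Kolmogorov bound'' is really the Wasserstein bound; the paper obtains $d_{\mathsf{W}}=O(\sqrt{d/n})$ from it and only then converts via $d_{\mathsf{KS}}\le C\sqrt{d_{\mathsf{W}}}$, which is exactly where the exponent $1/4$ enters. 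For the third-moment term your deterministic estimate $|L'-L|=O(\sqrt{d/n})$ together with $\mathbb{E}(L'-L)^2=2\lambda$ is a clean shortcut; the paper instead proves $\mathbb{E}|T-T'|^3=O(d^{3/2})$ by comparing moments on the Hamming sphere to the i.i.d.\ Bernoulli model, obtaining the sharper $O(n^{-1/2})$, but your weaker $O(\sqrt{d/n})$ is enough since the other term dominates.

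The genuine divergence, and the place where your proposal is underspecified, is the conditional-variance term. You assert that $\mathbb{E}[(\mathcal{A}'-\mathcal{A})^2\mid X]$ is dominated by $X^{\intercal}A^2X$ and that one can ``redo the moment computation with $A^2$ in place of $A$''. But after multiplying by $\mathbbm{1}\{X_I\ne X_J\}=\tfrac{1}{2}(1-X_IX_J)$ and summing, the expansion also produces a cubic piece $\sum_i X_i(AX)_i^2$ with the same $1/n$ prefactor, and this contributes to the variance at the \emph{same} order as the $X^{\intercal}A^2X$ piece, not lower; it requires its own argument on the Hamming sphere. In addition, $A^2$ is not a $0$--$1$ adjacency matrix (entries up to $d$, nonzero diagonal), so the part-1 computation does not literally repeat. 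The paper sidesteps all of this bookkeeping: it writes $\mathbb{E}[(T-T')^2\mid S]$ as a double sum $\sum_{j,k}h_{j,k}(S)$ with $h_{j,k}(S)=(d_{j,S}-d_{k,S}+d_{j,k})^2\mathbbm{1}_{j\in S,\,k\in S^c}$, shows via a polynomial-interpolation lemma that each covariance $\mathrm{cov}(h_{j,k}(S),h_{j_1,k_1}(S))$ differs from its i.i.d.\ Bernoulli counterpart by $O(d^2/n)$, and then bounds the i.i.d.\ variance by Efron--Stein tensorization to get $O(n^3d^3)$. Your direct-expansion route is not wrong in principle, but as written it glosses over the step that carries most of the technical weight.
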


A pair of random variables $(T,T^{\prime})$ is called exchangeable if $(T,T^{\prime}) \stackrel{d}{=} (T^{\prime},T)$.
\begin{definition}
We call a real valued exchangeable pair $(T,T^{\prime})$ an $a$-Stein pair with respect to the sigma algebra $\mathcal{F}$ if $T$ is $\mathcal{F}$ measurable and
$$\mathbb{E}(T^{\prime}|\mathcal{F}) = (1-a)T + a\mathbb{E}(T)$$
\end{definition}

We prove Theorem~\ref{cut_size_clt} using the following version of central limit theorem (Theorem 3.7 in \cite{ross2011fundamentals}).
\begin{theorem}
\label{stein_clt}
Let $(W,W^{\prime})$ be an $a$-Stein pair with respect to the sigma algebra $\mathcal{F}$ such that $W$ has 0 mean and unit variance. Let $N$ have the standard normal distribution. Then, 

$$d_{\mathsf{W}}(W,N) \leq \frac{\sqrt{\mathrm{var}\left(\mathbb{E}\left[(W^{\prime}-W)^2|\mathcal{F}\right]\right)}}{\sqrt{2\pi} a} + \frac{\mathbb{E}\left(|W-W^{\prime}|^3\right)}{3a}$$
\end{theorem}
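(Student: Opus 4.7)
The plan is to prove this Wasserstein bound by the standard exchangeable-pair Stein's method. For each $1$-Lipschitz test function $h$, I would represent $\mathbb{E}h(W)-\mathbb{E}h(N)$ as $\mathbb{E}[f'(W)-Wf(W)]$ via Stein's equation, exploit antisymmetry of $(W'-W)$ under swapping the exchangeable pair to surface the conditional second moment $\Delta:=\mathbb{E}[(W'-W)^2\mid\mathcal{F}]$, and close up with a Taylor remainder. Concretely, fix $h$ with $\|h'\|_\infty\le 1$, let $\tilde h:=h-\mathbb{E}h(N)$, and let $f$ be the bounded solution of $f'(w)-wf(w)=\tilde h(w)$. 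The classical Stein estimates (cf.\ Lemma~4.2 of \cite{ross2011fundamentals}) give $\|f'\|_\infty\le\sqrt{2/\pi}$ and $\|f''\|_\infty\le 2$; these two constants will produce the factors $\sqrt{2\pi}$ and $3$ appearing in the theorem.

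By exchangeability, $(W'-W)(f(W')+f(W))$ is antisymmetric under the swap $(W,W')\leftrightarrow(W',W)$, so its expectation vanishes. Splitting it and using the regression $\mathbb{E}[W'-W\mid\mathcal{F}]=-aW$ gives the identity
\begin{equation*}
2a\,\mathbb{E}[Wf(W)]=-2\,\mathbb{E}[(W'-W)f(W)]=\mathbb{E}[(W'-W)(f(W')-f(W))].
\end{equation*}
Taylor expanding in integral form, $f(W')-f(W)=(W'-W)f'(W)+\int_W^{W'}(W'-u)f''(u)\,du$, and taking expectations, I get $2a\,\mathbb{E}[Wf(W)]=\mathbb{E}[f'(W)\Delta]+R$, where the remainder satisfies $|R|\le\tfrac12\|f''\|_\infty\,\mathbb{E}|W'-W|^3$. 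Rearranging,
\begin{equation*}
\mathbb{E}[f'(W)-Wf(W)]=\mathbb{E}\!\left[f'(W)\!\left(1-\tfrac{\Delta}{2a}\right)\right]-\tfrac{R}{2a}.
\end{equation*}

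To finish, I bound the two pieces. Unit variance of $W$ and the regression property force $\mathbb{E}\Delta=2a$, so I may center $f'(W)$ inside the first expectation for free and apply Cauchy--Schwarz to obtain $\tfrac{\|f'\|_\infty}{2a}\sqrt{\mathrm{var}(\Delta)}\le\sqrt{\mathrm{var}(\Delta)}/(a\sqrt{2\pi})$. The remainder $|R|/(2a)$ is controlled by $\mathbb{E}|W'-W|^3/(3a)$ once the integral form of Taylor's theorem is used carefully together with $\|f''\|_\infty\le 2$. Taking the supremum over $1$-Lipschitz $h$ yields the stated Wasserstein bound. The main obstacle is constant-tracking rather than structural: the $\sqrt{2\pi}$ and $3$ arise from the sharp Stein-solution bounds and from handling the Taylor remainder via its integral representation; the exchangeability/regression identities and the computation $\mathbb{E}\Delta=2a$ are essentially automatic from the $a$-Stein pair structure.
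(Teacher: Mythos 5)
A preliminary remark: the paper does not prove this statement itself — it is quoted verbatim as Theorem~3.7 of Ross (2011) and used as a black box — so there is no in-paper proof to compare against, and I am evaluating your proposal on its own.

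Your overall route is the standard (and correct) exchangeable-pair argument, and everything up to and including the first piece is sound: the regression identity gives $2a\,\mathbb{E}[Wf(W)] = \mathbb{E}[(W'-W)(f(W')-f(W))]$, the computation $\mathbb{E}[\Delta] = 2a$ is correct, and with $\|f'\|_\infty \le \sqrt{2/\pi}$ Cauchy--Schwarz applied to $\mathbb{E}\left[f'(W)\left(1 - \Delta/(2a)\right)\right]$ does yield exactly $\sqrt{\mathrm{var}(\Delta)}/(a\sqrt{2\pi})$. The genuine gap is in the remainder term. Your one-sided Taylor gives $|R| \le \tfrac12\|f''\|_\infty\,\mathbb{E}|W'-W|^3$, and with $\|f''\|_\infty \le 2$ this yields $|R|/(2a) \le \mathbb{E}|W'-W|^3/(2a)$, which is strictly \emph{weaker} than the stated $\mathbb{E}|W'-W|^3/(3a)$. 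Your closing assertion that using the integral form ``carefully'' produces the $1/(3a)$ is not substantiated: the integral-form remainder $\int_W^{W'}(W'-u)f''(u)\,du$ is bounded exactly by $\tfrac12\|f''\|_\infty(W'-W)^2$, so no amount of care improves $1/(2a)$ from that decomposition alone.

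To actually close the gap you need one more use of exchangeability. Since swapping $W\leftrightarrow W'$ in $\mathbb{E}[(W'-W)^2 f'(W)]$ gives $\mathbb{E}[(W'-W)^2 f'(W')]$, you may replace $f'(W)$ by the symmetric average $\tfrac12(f'(W)+f'(W'))$ at no cost; the remainder then becomes a trapezoid-rule error, and
\[
\left|f(W')-f(W)-\tfrac{(W'-W)}{2}\bigl(f'(W)+f'(W')\bigr)\right|
= \left|\int_W^{W'}\!\Bigl(\tfrac{W+W'}{2}-u\Bigr)f''(u)\,du\right|
\le \tfrac{\|f''\|_\infty}{4}(W'-W)^2 ,
\]
which gives $|R|/(2a)\le \mathbb{E}|W'-W|^3/(4a) \le \mathbb{E}|W'-W|^3/(3a)$. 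So the fix is a second symmetrization step, not a sharper reading of the one-sided Taylor remainder.
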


We will find it convenient to think of the quadratic form $x^{\intercal}Ax$ in graph theoretic language. $A$ is the adjacency matrix of the $d$ regular simple graph $G$ - that is, $A_{i,j} \in \{0,1\}$ and $A_{i,j} = 1$ iff $(i,j) \in E(G)$. Consider the set $S(x) = \{i \in [n] : x_i = 1\}$. Let $\chi_S$ be the $n$ dimensional column vector such that $\chi_S(i) = 1$ if $i\in S$ and $\chi_S(i) = 0$ if $i \in S^{c}$. We shall henceforth use $S(x)$, $\chi_S(x)$ and $x$ interchangeably. Define $d(A,B)$ to be the number of edges of $G$ with one vertex in $A$ and the other in $B$. When $A= \{j\}$, we denote $d(A,B)$ be $d_{jB}$. We can easily show that 
\begin{equation}
\frac{1}{2} x^{\intercal}Ax = \frac{nd}{2} - 2d(S,S^c)
\label{eq:cut_size_identity}
\end{equation}
Therefore, it is sufficient to prove the CLT for $d(S(X),S(X)^c)$ when $S(X) \sim \mathrm{unif}(\mathcal{S})$. For the sake of clarity, we denote the random variable $S(X)$ by just $S$. Clearly, $|S| = sn =: l$. Define $T(S) := d(S,S^c)$. 

 We define the following exchangable pair $(S,S^{\prime})$ :
Draw $K$ and $J \in \{1,...,n\}$ uniformly at random and independent of each other and independent of $S$. Define $\chi_{S^{\prime}}$ to be the vector obtained by exchanging entries at indices $K$ and $J$ of $\chi_S$. 

Simple calculation using the fact that $G$ is $d$-regular, we can show that :
\begin{equation}
T(S^{\prime}) = \begin{cases}
T(S) &\quad \text{if $\chi_S (J) = \chi_S(K)$} \\
T(S) + 2(d_{J,S} - d_{K,S} + d_{J,K}) &\quad \text{if $J \in S$ and $K \in S^{c}$} \\
T(S) + 2(d_{K,S} - d_{J,S} + d_{J,K}) &\quad \text{if $K \in S$ and $J \in S^{c}$}
\end{cases}
\end{equation}

We apply Theorem~\ref{stein_clt} to the centered and normalized version of the Stein pair $(T(S), T^{\prime}(S))$ to prove Theorem~\ref{cut_size_clt}. We defer the proofs to Appendix~\ref{sec:clt_proof}.

\section{Concentration of Quadratic Forms over Hamming Sphere}
\label{sec:burkholder_subexp}
Let $S$ be the uniform random set of constant size and $T(S)$ be the size of the edge-cut, just like in Section~\ref{sec:clt}. Here, we relax the constraint on the size of $S$ so that $0 \leq |S| \leq n$. To lower bound the total variation distance, we need Condition~\textbf{C5}. To prove this condition, for the examples considered in this paper, we need sub-exponential bounds of the form:

\begin{equation}
\log{\mathbb{E}\exp{\left(\beta T - \beta \mathbb{E}T\right)}} \leq \frac{C\beta^2 nd}{1 - D\sqrt{nd}|\beta|}
\label{eq:subexp_wanted}
\end{equation}

We can easily show that the function $T(S)$ is $d$ Lipschitz in Hamming distance. Standard techniques give a sub-Gaussian bound of the form:

$$\log{\mathbb{E}\exp{\left(\beta T - \beta \mathbb{E}T\right)}} \leq C\beta^2 nd^2$$
The variance proxy of $nd^2$ is the equation above is much worse than the one in Equation~\eqref{eq:subexp_wanted}. This cannot give us the required sharp threshold when $d$ increases with $n$. In the case of centered independent random variables, i.e, when $y_i = \mathrm{Ber}(s)-s$, Hanson-Wright inequality for quadratic forms gives a sub-exponential concentration inequality like~\eqref{eq:subexp_wanted}. But it is not clear how to extend this to case when there are weak dependencies. 

To deal with this, tensorization of roughly the following form is normally proved:

$
\log{\mathbb{E}\exp{\left(\beta T - \beta \mathbb{E}T\right)}} 
\leq C \beta^2 \sum_{i=1}^n \mathbb{E}\Delta_i^2(T)
$.
Where $\Delta_i (f(x)) := f(x_i^{+}) - f(x_i^{-})$ is the discrete derivative. Here we run into a second problem: since our random set $S$ has constant size almost surely, we cannot remove a single element and the discrete derivative $\Delta_i f(x)$ cannot be defined within our space. We use the exchangeable pair used in Section~\ref{sec:clt} and Appendix~\ref{sec:clt_proof} to prove a well defined tensorization similar to the one above.

Using our method, based on Burkh\"older-Davis-Gundy type inequalities proved in \cite{chatterjee2007stein}, we show that:
\begin{equation}
\log{\mathbb{E}\exp{\gamma(T - \mathbb{E}T)}} \leq \frac{32nd\gamma^2(1+o(1))}{ 1 - 16nd\gamma^2(1+o(1))} 
\label{reverse_jensen}
\end{equation}
We defer the full proof to Appendix~\ref{sec: quadratic_form_concentration_proof}.

\section{Comparing ERGM to Erd\H{o}s-R\'enyi Model}

\label{sec:ergm_result}
Here, we compare $G(n,p_n)$ ($p_n \in (\delta,1-\delta)$ for some constant $\delta > 0$) to $\mathsf{ERGM}(\beta_1,\beta_2)$ which is the exponential random graph with $H_2$ being the  \hbox{(\shape{\Vshape})} graph.  Consider the following hypothesis testing problem given a single sample of a random simple graph $G$ over n vertices:

$H_0$ : $G$ is drawn from the distribution $G(n,p)$ for some $p \in (\delta,1-\delta)$

$H_1$ : $G$ is drawn from $\mathsf{ERGM}(\beta_1,\beta_2)$ for $\beta_1 \in \mathbb{R}$ and $\beta_2 \in \mathbb{R}^{+}$ for unknown $\beta_1$ and $\beta_2$ such that $(\beta_1,\beta_2) \in \Lambda_{\mathsf{ERGM}}$

Given a sample graph $X$, we let $V(X)$ be the number of wedge graphs \hbox{(\shape{\Vshape})} in $X$.
\begin{theorem}
\label{thm:ergm_threshold}
Let $L_n$ be any positive sequence diverging to infinity.
\begin{enumerate}
\item If $\Lambda_{\mathsf{ERGM}} = \{(\beta_1,\beta_2): \beta_2 \geq L_n\frac{1}{\sqrt{n}}, \beta_1 \in \mathbb{R}\}$ then the canonical statistical test $\mathcal{T}(T_n,V)$, which depends only on $\delta$ and $L_n$, can distinguish $H_0$ and $H_1$ with high probability for some choice of $T_n(\delta,L_n) \to \infty$.
\item If $\Lambda_{\mathsf{ERGM}} = \{(\beta_1,\beta_2): 0\leq \beta_2 =\frac{1}{L_n\sqrt{n}}, \beta_1 \in \mathbb{R}\}$, then there is no statistical test which can distinguish $H_0$ and $H_1$ with high probability using constant number of i.i.d. samples.
\end{enumerate}
\end{theorem}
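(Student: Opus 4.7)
The plan is to apply the abstract framework of Theorem~\ref{main_theorem}. Writing $G(n,p)$ as $\mathsf{ERGM}(\beta_1^{\mathrm{ER}},0)$ with $\beta_1^{\mathrm{ER}}=\tfrac12\log\tfrac{p}{1-p}$, the Radon--Nikodym derivative becomes
\begin{equation*}
\frac{dq_n}{dp_n}(G) \;\propto\; \exp\!\Bigl(\beta_2\bigl[\tfrac{N_2(G)}{n}+\tfrac{\beta_1-\beta_1^{\mathrm{ER}}}{\beta_2}N_1(G)\bigr]\Bigr),
\end{equation*}
so I take $g_n(G)=\tfrac{N_2(G)}{n}+\tfrac{\beta_1-\beta_1^{\mathrm{ER}}}{\beta_2}N_1(G)$. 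I partition simple graphs by edge count, $A_k=\{G:N_1(G)=2k\}$, with $S_n=\{k:k/N\in[p-\epsilon,p+\epsilon]\}$ and $N=\binom{n}{2}$. Under $p_n$, the conditional measure $p^{(k)}$ is uniform on $A_k$, so \textbf{C1} is immediate and \textbf{C2} follows from standard binomial concentration of $|E|$. Crucially, $N_1$ is constant on $A_k$, hence $g_n(G)-e_k(g_n)=\tfrac{1}{n}(N_2(G)-\mathbb{E}_{p^{(k)}}N_2)$ is independent of $\beta_1,\beta_1^{\mathrm{ER}},\beta_2$. This verifies the $\beta_n$-independence clause of \textbf{C3} and, as in Remark~\ref{rem:parameter_independence_ising}, shows that the canonical tests $\mathcal{T}^{\mathsf{can}}(T_n,g_n)$ and $\mathcal{T}^{\mathsf{can}}(T_n,V)$ coincide, so the test depends only on $\delta$ and $L_n$.

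The CLT \textbf{C4} and the variance regularity in \textbf{C3} reduce to Theorem~\ref{cut_size_clt}. Encoding a graph by its edge-indicator $e\in\{0,1\}^{N}$, one has $V=\tfrac12 e^{\intercal}Me$, where $M$ is the adjacency matrix of the line graph $L(K_n)$, a $(2n{-}4)$-regular graph on $N=\binom{n}{2}$ vertices. The substitution $x=2e-\mathbf{1}$ maps $A_k$ bijectively to a $\{\pm 1\}$-Hamming sphere, and on this sphere $V$ differs from $\tfrac12 x^{\intercal}Mx$ only by an additive constant (the cross term is proportional to $\sum_i x_i$, which is fixed). Applying Theorem~\ref{cut_size_clt} with $(n,d)$ replaced by $(N,2n{-}4)$, legitimate since $(2n{-}4)/N=O(1/n)\to 0$, then yields $\mathrm{Var}_{p^{(k)}}(V)=\Theta(n^3)$ and a Kolmogorov--Smirnoff error $O(n^{-1/4})$, uniformly in $k\in S_n$. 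Consequently $\sigma_n^2=\mathrm{Var}_{p^{(k)}}(g_n)=\Theta(n)$, so the abstract threshold $\beta_n\sigma_n\to\infty$ in Theorem~\ref{main_theorem}(1) is precisely $\beta_2\sqrt n\to\infty$, proving part~1.

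For part~2 I need to verify \textbf{C5}, i.e., sub-exponential concentration of $g_n$ under $p_n$ with variance proxy $O(\sigma_n^2)=O(n)$. Since $p_n=G(n,p)$ has independent edges, this reduces to polynomial concentration under product measure once the free parameter has been calibrated. Expanding in centered variables $\tilde e_{ij}=e_{ij}-p$, the form $V$ decomposes as $V_0+V_1+V_2$ into constant, linear, and purely-quadratic components with $\mathrm{Var}_{p_n}(V_1)=\Theta(n^4)$ (since $V_1$ is proportional to $|E|$) but $\mathrm{Var}_{p_n}(V_2)=\Theta(n^3)$. Choosing $\tfrac{\beta_1-\beta_1^{\mathrm{ER}}}{\beta_2}$, equivalently $p$ as a function of $(\beta_1,\beta_2)$, to annihilate the linear-in-$\tilde e$ piece of $g_n$ leaves $\mathrm{Var}_{p_n}(g_n)=\Theta(n^3)/n^2=\Theta(n)$, and a Hanson--Wright bound applied to the residual quadratic form (whose matrix has Frobenius-squared $\Theta(n^3)$ and operator norm $O(n)$) supplies the sub-exponential tail required by \textbf{C5}. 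Theorem~\ref{main_theorem}(2) then yields $d_{\mathsf{TV}}(G(n,p),\mathsf{ERGM}(\beta_1,\beta_2))\to 0$ whenever $\beta_2\sqrt n\to 0$. The main obstacle I anticipate is the variance calibration: because the relation between $p$ and $\beta_1^{\mathrm{ER}}$ is nonlinear, one must solve an implicit equation to pick an admissible $p\in(\delta,1-\delta)$ as a function of $(\beta_1,\beta_2)$ that cancels the linear-in-$\tilde e$ contribution precisely enough that the residual does not reintroduce a $\Theta(n^2)$ term into the variance; controlling this cancellation is the ``super-concentration'' phenomenon flagged in the introduction and is the genuinely delicate step of the argument.
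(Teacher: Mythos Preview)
Your setup for part~1 is essentially the paper's: encode graphs via the line-graph quadratic form, partition by edge count, and invoke Theorem~\ref{cut_size_clt} with $(N,2n{-}4)$ in place of $(n,d)$ to verify \textbf{C1}--\textbf{C4}. The normalization differs (you put the factor $1/n$ into $g_n$ and take $\beta_n=\beta_2$, while the paper keeps $g_n$ unscaled and sets $\beta_n=2\beta_2/n$), but $\beta_n\sigma_n$ is the same either way.

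For part~2 your route diverges from the paper's and is in fact more direct. The paper verifies \textbf{C5} by conditioning on the edge count: it bounds $\log\mathbb{E}[e^{\beta_n g}\mid E=m]$ via the Hamming-sphere concentration of Section~\ref{sec:burkholder_subexp}, then shows that the conditional mean $\mathbb{E}[g\mid E]$ can be made into a specific quadratic $h(E)=E^2-(2pN+1-2p)E$ whose fluctuations are controlled by a \emph{second} Stein-pair argument (Appendix~G). Your approach avoids both layers: under $G(n,p)$ the edges are genuinely independent, so after choosing the free coefficient to kill the linear-in-$\tilde e$ term, $g_n-\mathbb{E}g_n$ is a pure off-diagonal quadratic form in centered Bernoullis and Hanson--Wright applies directly with $\|H\|_F^2=\Theta(n^3)$ and $\|H\|_{\mathrm{op}}=\Theta(n)$, which is exactly the sub-exponential profile \textbf{C5} demands. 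The paper's detour is explained by its desire to reuse the Hamming-sphere machinery built for the Ising case (where independence is absent); for ERGM versus $G(n,p)$ your shortcut is legitimate and cleaner.

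One correction to your last paragraph: the calibration is not delicate and there is no implicit equation to solve. Since $\Lambda_{\mathsf{ERGM}}$ places no constraint on $\beta_1$, you are free to \emph{fix} $p\in(\delta,1-\delta)$ and then \emph{choose} $\beta_1=\tfrac12\log\tfrac{p}{1-p}-\tfrac{2p(n-2)}{n}\beta_2$ to annihilate the linear piece exactly; this is what the paper does. Solving for $p$ given $(\beta_1,\beta_2)$ is unnecessary for the lower bound and is where the nonlinearity you worry about would enter. With that adjustment, the cancellation is exact, no residual $\Theta(n^2)$ term appears, and the ``super-concentration'' step reduces to a one-line coefficient match.
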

We proceed in a way similar to Section~\ref{sec:ising_model_result} by proving each of the conditions \textbf(C1) - \textbf(C5). We defer the  proof to Appendix~\ref{sec:ergm_proof}.

\section*{Acknowledgment}
This work was supported in part by the grants ONR N00014-17-1-2147, DARPA W911NF-16-1-0551, and NSF CCF-1565516. 

\bibliographystyle{abbrvnat}
\bibliography{references}

\begin{thebibliography}{}

\bibitem[Bhamidi et~al., 2011]{bhamidi2011mixing}
Bhamidi, S., Bresler, G., and Sly, A. (2011).
\newblock Mixing time of exponential random graphs.
\newblock {\em The Annals of Applied Probability}, pages 2146--2170.

\bibitem[Boucheron et~al., 2013]{boucheron2013concentration}
Boucheron, S., Lugosi, G., and Massart, P. (2013).
\newblock {\em Concentration inequalities: A nonasymptotic theory of
  independence}.
\newblock Oxford university press.

\bibitem[Bresler and Nagaraj, 2017]{bresler2017stein}
Bresler, G. and Nagaraj, D.~M. (2017).
\newblock Stein's method for stationary distributions of markov chains and
  application to {I}sing models.
\newblock {\em arXiv preprint arXiv:1712.05743}.

\bibitem[Bubeck et~al., 2016]{bubeck2016testing}
Bubeck, S., Ding, J., Eldan, R., and R{\'a}cz, M.~Z. (2016).
\newblock Testing for high-dimensional geometry in random graphs.
\newblock {\em Random Structures \& Algorithms}, 49(3):503--532.

\bibitem[Canonne et~al., 2017]{canonne2017testing}
Canonne, C.~L., Diakonikolas, I., Kane, D.~M., and Stewart, A. (2017).
\newblock Testing {B}ayesian networks.
\newblock In {\em Conference on Learning Theory}, pages 370--448.

\bibitem[Chatterjee, 2005]{chatterjee2005concentration}
Chatterjee, S. (2005).
\newblock Concentration inequalities with exchangeable pairs (ph. d. thesis).
\newblock {\em arXiv preprint math/0507526}.

\bibitem[Chatterjee, 2007]{chatterjee2007stein}
Chatterjee, S. (2007).
\newblock Stein's method for concentration inequalities.
\newblock {\em Probability theory and related fields}, 138(1):305--321.

\bibitem[Daskalakis et~al., 2016]{daskalakis2016testing}
Daskalakis, C., Dikkala, N., and Kamath, G. (2016).
\newblock Testing {I}sing models.
\newblock {\em arXiv preprint arXiv:1612.03147}.

\bibitem[Daskalakis et~al., 2017]{daskalakis2017concentration}
Daskalakis, C., Dikkala, N., and Kamath, G. (2017).
\newblock Concentration of multilinear functions of the {I}sing model with
  applications to network data.
\newblock In {\em Advances in Neural Information Processing Systems}, pages
  12--22.

\bibitem[de~Jong, 1987]{de1987central}
de~Jong, P. (1987).
\newblock A central limit theorem for generalized quadratic forms.
\newblock {\em Probability Theory and Related Fields}, 75(2):261--277.

\bibitem[Eldan and Gross, 2017]{eldan2017exponential}
Eldan, R. and Gross, R. (2017).
\newblock Exponential random graphs behave like mixtures of stochastic block
  models.
\newblock {\em arXiv preprint arXiv:1707.01227}.

\bibitem[Ellis, 2007]{ellis2007entropy}
Ellis, R. (2007).
\newblock {\em Entropy, large deviations, and statistical mechanics}.
\newblock Springer.

\bibitem[Gao and Lafferty, 2017]{gao2017testing}
Gao, C. and Lafferty, J. (2017).
\newblock Testing network structure using relations between small subgraph
  probabilities.
\newblock {\em arXiv preprint arXiv:1704.06742}.

\bibitem[Gheissari et~al., 2017]{gheissari2017concentration}
Gheissari, R., Lubetzky, E., and Peres, Y. (2017).
\newblock Concentration inequalities for polynomials of contracting {I}sing
  models.
\newblock {\em arXiv preprint arXiv:1706.00121}.

\bibitem[Ghoshdastidar et~al., 2017]{ghoshdastidar2017two}
Ghoshdastidar, D., Gutzeit, M., Carpentier, A., and von Luxburg, U. (2017).
\newblock Two-sample tests for large random graphs using network statistics.
\newblock {\em Proceedings of Machine Learning Research vol}, 65:1--24.

\bibitem[G{\"o}tze and Tikhomirov, 2002]{gotze2002asymptotic}
G{\"o}tze, F. and Tikhomirov, A. (2002).
\newblock Asymptotic distribution of quadratic forms and applications.
\newblock {\em Journal of Theoretical Probability}, 15(2):423--475.

\bibitem[Hall, 1984]{hall1984central}
Hall, P. (1984).
\newblock Central limit theorem for integrated square error of multivariate
  nonparametric density estimators.
\newblock {\em Journal of multivariate analysis}, 14(1):1--16.

\bibitem[Javadi and Montanari, 2015]{javadi2015statistical}
Javadi, H. and Montanari, A. (2015).
\newblock A statistical model for motifs detection.
\newblock {\em arXiv preprint arXiv:1511.05254}.

\bibitem[Mart{\'\i}n~del Campo et~al., 2017]{martin2017exact}
Mart{\'\i}n~del Campo, A., Cepeda, S., and Uhler, C. (2017).
\newblock Exact goodness-of-fit testing for the ising model.
\newblock {\em Scandinavian Journal of Statistics}, 44(2):285--306.

\bibitem[Mukherjee, 2013]{mukherjee2013consistent}
Mukherjee, S. (2013).
\newblock Consistent estimation in the two star exponential random graph model.
\newblock {\em arXiv preprint arXiv:1310.4526}.

\bibitem[Ross et~al., 2011]{ross2011fundamentals}
Ross, N. et~al. (2011).
\newblock Fundamentals of stein's method.
\newblock {\em Probability Surveys}, 8:210--293.

\bibitem[Rotar et~al., 1979]{rotar1979limit}
Rotar, V.~I. et~al. (1979).
\newblock Limit theorems for polylinear forms.
\newblock {\em Journal of Multivariate analysis}, 9(4):511--530.

\end{thebibliography}
\appendix

\section{Proof of Main Abstract Theorem~\ref{main_theorem}}
\label{sec: main_theorem_proof}

We first consider the case when $\sigma_n \beta_n \to \infty$.
Given a sample from $p_n$ or $q_n$, we prove that the statistical test $\mathcal{T}^{\mathsf{can}}(T_n,g_n)$ succeeds with high probability for some choice of $T_n$. Let $\mathcal{D}^{\mathsf{can}}$ be the decision function associated with the test $\mathcal{T}^{\mathsf{can}}(T_n,g_n)$.\\Consider the type 1 error rate:
\begin{align}
\mathbb{P}\left(\mathcal{D}^{\mathsf{can}}(X) = H_1 | X \sim H_0\right) &= p_n\left(m(X) \notin S_n\right) +\sum_{m\in S_n} p_n \left(\frac{g(X)- e_{m}(g)}{\sigma_{m}} \geq T \biggr \rvert m(X) = m\right)p_n\left(A_m\right) \nonumber \\
&\leq \alpha_n + \sum_{m \in S_n}\left[1-\Phi(T) + d_{\mathsf{KS}}\left(\mathcal{L}\left(\tfrac{g(X_m) - e_m(g)}{\sigma_m}\right),\mathcal{N}(0,1)\right) \right]p\left(A_m\right) \nonumber \\
&\leq \alpha_n + 1-\Phi(T) + \tau_n \label{type_1_error}
\end{align}
Now consider the type 2 error rate:
\begin{align}
\mathbb{P}\left(\mathcal{D}^{\mathsf{can}}(X) = H_0| X \sim H_1 \right) &= q_n\left(\frac{g(X)- e_{m(X)}(g)}{\sigma_{m(X)}} < T,m(X) \in S_n\right)\nonumber\\
&= \sum_{m \in S_n} q_n\left(\frac{g(X)- e_{m}(g)}{\sigma_{m}} < T\middle| m(X) = m\right) q_n\left(A_m\right)\nonumber\\
&= \sum_{m \in S_n} \tfrac{q_n\left(\frac{g(X)- e_{m}(g)}{\sigma_{m}} < T\middle| X\in A_m\right)}{q_n\left(\frac{g(X)- e_{m}(g)}{\sigma_{m}} < 2T\middle| X\in A_m\right)+q_n\left(\frac{g(X)- e_{m}(g)}{\sigma_{m}} \geq2 T\middle| X\in A_m\right)} q_n\left(A_m\right)\nonumber\\
&\leq  \sum_{m \in S_n} \tfrac{q_n\left(\frac{g(X)- e_{m}(g)}{\sigma_{m}} < T\middle| X\in A_m\right)}{q_n\left(\frac{g(X)- e_{m}(g)}{\sigma_{m}} \geq 2T\middle| X\in A_m\right)} q_n\left(A_m\right)\nonumber\\
&= \sum_{m \in S_n} \tfrac{\int_{g < e_m + T\sigma_m } e^{\beta_n g} dp^{(m)}}{\int_{g \geq e_m + 2T\sigma_m } e^{\beta_n g} dp^{(m)}} q_n\left(A_m\right)\nonumber \\
&\leq \sum_{m \in S_n} \tfrac{ e^{\left(\beta_n e_m + T\beta_n\sigma_m\right)} }{p^{(m)}\left(\{g \geq e_m + 2T\sigma_m\}\right)e^{\left(\beta_n e_m + 2T\beta_n\sigma_m\right)}} q\left(A_m\right)\nonumber \\
&\leq  \sum_{m \in S_n} \tfrac{e^{-T\beta_n\sigma_m }}{1-\Phi(2T) - \tau_n}q\left(A_m\right) \nonumber \\
&\leq \tfrac{e^{-cT\beta_n\sigma_n }}{1-\Phi(2T) - \tau_n} \label{type_2_error}
\end{align}
We use the fact that for positive $x$ and $y$, $\max(x,y) \leq x+y$, equation~\eqref{type_1_error} and~\eqref{type_2_error}, to conclude that for every $T >0$ such that $1-\Phi(2T) > \tau_n$ the error rate $p_{\mathsf{error}}$
\begin{equation}
p_{\mathsf{error}}  \leq \alpha_n + 1 - \Phi(T) + \tau_n + \frac{e^{-cT\beta_n\sigma_n }}{1-\Phi(2T) - \tau_n} \leq \alpha_n + 1 - \Phi(T) + \tau_n + \frac{e^{-cTL_n }}{1-\Phi(2T) - \tau_n}   \label{error_bound}
\end{equation}
For $n$ large enough, $\tau_n + e^{-cL_n} < \frac{1}{2}$. For such $n$, we can pick $T = T_n > 0$ such that

$$1-\Phi(2T_n) = \tau_n + e^{-cL_n}$$
Clearly, $T_n \to \infty$, therefore, $1-\Phi(T_n) \to 0$ and $$\frac{e^{-cT_nL_n }}{1-\Phi(2T_n) - \tau_n} = e^{-c(T_n-1)L_n } \to 0$$
Using the equations above in equation~\eqref{error_bound}, we conclude that:
$$p_{\mathsf{error}} \leq \alpha_n + 1 - \Phi(T_n) + \tau_n + e^{-c(T_n-1)L_n} \to 0$$

Therefore, the decision function $\mathcal{D}^{\mathsf{can}}(X)$ has a vanishing error rate for the choice of $T = T_n$ made above. Let $A^{\mathsf{can}} = \{x\in \Omega: \mathcal{D}^{\mathsf{can}}(x) = H_0\}$ 
\begin{align}
d_{\mathsf{TV}}(p_n,q_n) &= \sup_{A \in \mathcal{F}_n}p_n(A)-q_n(A)\nonumber \\
&\geq p_n(A^{\mathsf{can}}) - q_n(A^{\mathsf{can}})\nonumber \\
&=1- p_n\left((A^{\mathsf{can}})^{c}\right) - q_n(A^{\mathsf{can}})\nonumber \\
&= 1 - \mathbb{P}(\mathcal{D}^{\mathsf{can}}(X) = H_1| X \sim H_0) - \mathbb{P}(\mathcal{D}^{\mathsf{can}}(X) = H_0| X \sim H_1)\nonumber\\
&\geq 1 - 2\max(\mathbb{P}(\mathcal{D}^{\mathsf{can}}(X) = H_1| X \sim H_0),\mathbb{P}(\mathcal{D}^{\mathsf{can}}(X) = H_0| X \sim H_1))\nonumber\\
&= 1-2p_{\mathsf{error}} \label{eq:tv_error_bound}
\end{align}

Using Equation~\eqref{eq:tv_error_bound} we conclude that whenever $\sigma_n\beta_n \to \infty$,
$$d_{\mathsf{TV}}(p_n,q_n) \to 1$$

We now consider the case $\beta_n\sigma_n \to 0$.
Consider the set $A_{g_n} = \{x \in \Omega : \frac{e^{\beta_n g_n}}{\mathbb{E}_{p_n} e^{\beta_n g_n}} < 1\}$. It can be easily shown that $A_{g_n} \in \mathcal{F_n}$ and $d_{\mathsf{TV}}(p_n,q_n ) = p_n(A_{g_n}) - q_n(A_{g_n})$. Let $Z_{g_n} := \mathbb{E}_{p_n} e^{\beta_n g_n}$. Since, $\beta_n\sigma_n \to 0$, the following inequalities hold when $\beta_n\sigma_n$ is small enough and any $T >0$

\begin{align}
d_{\mathsf{TV}}(p_n,q_n ) &= p_n(A_{g_n}) - q_n(A_{g_n})\nonumber\\
&= \int \mathbbm{1}_{A_{g_n}}\left(1-\frac{e^{\beta_ng_n}}{\mathbb{E}_pe^{\beta_n g_n}}\right)dp_n \nonumber \\
&= \int \mathbbm{1}_{A_{g_n}} \left( 1- e^{-|\beta_n g_n - \log{Z_{g_n}}|}\right)dp_n \nonumber \\
&\leq \int \left( 1- e^{-|\beta_n g_n - \log{Z_{g_n}}|}\right) dp_n\nonumber\\
&\leq \int \left(1 - e^{-|\beta_n g_n - \beta_n\mathbb{E}_{p_n}[g_n]|}e^{-|\log{Z_{g_n}} - \beta_n \mathbb{E}_{p_n}[g]|}\right)dp_n\nonumber\\
&\leq \int \left( 1 - e^{- \frac{A\beta_n^2 \sigma_n^2}{1-B|\beta_n| \sigma_n}}e^{-|\beta_n(g_n - \mathbb{E}_{p_n}[g_n])|}\right)dp_n \nonumber\\
&\leq p_n\left(|g_n - \mathbb{E}_{p_n}[g_n]|\geq T\right) + 1-\exp{\left(- \frac{A\beta_n^2 \sigma_n^2}{1-B\beta_n \sigma_n}\right)}e^{-\beta_nT} \nonumber\\
&\leq \frac{\sigma_n^2}{T^2} + 1-\exp{\left(- \frac{A\beta_n^2 \sigma_n^2}{1-B\beta_n \sigma_n}\right)}e^{-\beta_nT}\label{tv_upper_bound}
\end{align}

Where we have used the Chebyshev bound in the last step and the subexponentiality of $g_n$. The coefficients $(A,B)$ are consistent with coefficients $(C,D)$ in condition \textbf{C5}. Let $\gamma_n \to 0$ be any positive sequence such that $\frac{\beta_n\sigma_n}{\gamma_n} \to 0$. Let $T = \frac{\gamma_n}{\beta_n}$. Using this choice of $T$ in Equation~\eqref{tv_upper_bound}, we conclude that:
$$d_{\mathsf{TV}}(p_n,q_n) \leq \frac{\beta_n^2\sigma_n^2}{\gamma_n^2} + 1- \exp{\left(- \frac{A\beta_n^2 \sigma_n^2}{1-B\beta_n \sigma_n}\right)}\exp{\left(-\gamma_n\right)} \to 0.$$

\section{Proof of Central Limit Theorem}

\label{sec:clt_proof}

\begin{lemma}
$(T(S),T(S^{\prime}))$ is a $\lambda$-Stein pair with respect to $\mathcal{F}(S)$, where $\lambda = 4\frac{n-1}{n^2}$. Further, $\mathbb{E}[T(S)] = \tfrac{l(n-l)d}{n-1} $
\label{lem:expectation_stein_pair}
\end{lemma}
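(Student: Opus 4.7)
The plan is to verify the two claims of the lemma separately, first computing $\mathbb{E}[T(S)]$ by a direct double-counting argument and then verifying the Stein pair relation via the case analysis for $T(S')-T(S)$ already provided in the paper.

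For the mean, I would write $T(S) = \sum_{(u,v) \in E} \bigl(\mathbbm{1}[u\in S, v\in S^c] + \mathbbm{1}[v\in S, u\in S^c]\bigr)$ and use the fact that, since $S$ is uniform on $\mathcal{S}$, for distinct $u,v$ we have $\mathbb{P}(u\in S, v \in S^c) = \tfrac{l(n-l)}{n(n-1)}$. Since $|E|=nd/2$ for a $d$-regular graph, this yields $\mathbb{E}[T(S)] = \tfrac{l(n-l)d}{n-1}$ directly.

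For the Stein pair property, exchangeability of $(S,S')$ follows because $S$ is uniform on $\mathcal{S}$ and the swap operation with uniform $(J,K)$ is an involution that acts symmetrically on state pairs. It then remains to compute $\mathbb{E}[T(S')-T(S)\mid \mathcal{F}(S)]$. Conditioning on $S$, the case $\chi_S(J)=\chi_S(K)$ contributes nothing, and the two remaining cases (with symmetric roles of $J,K$) each contribute $2(d_{J,S}-d_{K,S}+d_{J,K})$. Averaging over $(J,K)$ uniform on $[n]^2$ therefore gives
\begin{equation*}
\mathbb{E}[T(S')-T(S)\mid S] \;=\; \frac{4}{n^2}\sum_{J\in S,\,K\in S^c}\bigl(d_{J,S}-d_{K,S}+d_{J,K}\bigr).
\end{equation*}
I would then evaluate the three sums by graph-theoretic bookkeeping: $\sum_{J\in S}d_{J,S}=ld-T(S)$ from $d$-regularity, $\sum_{K\in S^c}d_{K,S}=T(S)$ by definition of the cut, and $\sum_{J\in S,K\in S^c}d_{J,K}=T(S)$. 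Combining yields
\begin{equation*}
\mathbb{E}[T(S')-T(S)\mid S] \;=\; \frac{4(n-l)ld - 4(n-1)T(S)}{n^2} \;=\; -\frac{4(n-1)}{n^2}\Bigl(T(S)-\tfrac{l(n-l)d}{n-1}\Bigr),
\end{equation*}
which, using the already-established formula for $\mathbb{E}[T(S)]$, rearranges exactly to $\mathbb{E}[T(S')\mid\mathcal{F}(S)] = (1-\lambda)T(S)+\lambda\,\mathbb{E}[T(S)]$ with $\lambda = 4(n-1)/n^2$.

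The computation is essentially arithmetic, so there is no serious obstacle; the only subtlety worth being careful about is the accounting for the edge $(J,K)$ when it exists, both in deriving the per-case expression $2(d_{J,S}-d_{K,S}+d_{J,K})$ and in ensuring that the symmetry between the $J\in S, K\in S^c$ and $K\in S, J\in S^c$ cases is invoked correctly. Once the three summation identities above are in hand, the $d$-regular cancellation that collapses $(n-l)+l-1$ into the clean factor $(n-1)$ falls out cleanly.
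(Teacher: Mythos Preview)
Your proposal is correct and follows essentially the same computation as the paper: both evaluate $\mathbb{E}[T(S')\mid S]$ from the case analysis and reduce the double sum using $d$-regularity to obtain the coefficient $4(n-1)/n^2$. The only minor difference is order: you compute $\mathbb{E}[T(S)]$ directly first via $\mathbb{P}(u\in S,v\in S^c)=\tfrac{l(n-l)}{n(n-1)}$, whereas the paper reads it off from the Stein relation using $\mathbb{E}[T(S)]=\mathbb{E}[T(S')]$; the conditional-expectation bookkeeping is otherwise the same.
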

\begin{proof}
Clearly,

\begin{align}
\mathbb{E}[T(S^{\prime})|S] &= T(S) + \tfrac{4}{n^2}\sum_{j \in S}\sum_{k\in S^c} d_{j,S} - d_{k,S} + d_{j,k} \\
&= T(S) + \frac{4}{n^2} \sum_{j\in S}(d - d_{j,S^c})(n-l) -  \sum_{k\in S^c} ld_{k,S} + \sum_{j \in S}{k\in S^c} d_{j,k}\\
&= \left(1-4\frac{n-1}{n^2}\right)T(S)  + 4\frac{l(n-l)d}{n^2}
\end{align}
Using the fact that $\mathbb{E}T(S) = \mathbb{E}T(S^{\prime})$, we conclude the result.
\end{proof}
We shall henceforth shorten $T(S^{\prime})$ to $T^{\prime}$ and define $\lambda := 4\frac{n-1}{n^2}$. We list some elementary results about various moments. 

\begin{lemma}
For a $d$-regular graph, when $n-d-2 > l > d + 2$, if $l = \theta(n)$
\begin{enumerate}
\item $\mathbb{E}\sum_{j\in S}\sum_{k\in S^c} d_{j,S}^2 = l(n-l)\left(d^2\frac{(l-1)(l-2)}{(n-1)(n-2)} + d\frac{(l-1)(n-l)}{(n-1)(n-2)}\right)$
\item $\mathbb{E}\sum_{j\in S}\sum_{k\in S^c} d_{k,S}^2  = l(n-l)\left(d^2\frac{(l)(l-1)}{(n-1)(n-2)} + d\frac{(l)(n-l -1)}{(n-1)(n-2)}\right)$
\item $\mathbb{E}\sum_{j\in S}\sum_{k\in S^c} d_{j,S}d_{k,S} = d^2l(n-l)\frac{(l)(l-1)}{(n-1)^2} - \mathrm{var}(T)$
\item $\mathbb{E}\sum_{j\in S}\sum_{k\in S^c} d_{k,S}d_{j,k} = O(nd^2)$
\item
$\mathbb{E}\sum_{j\in S}\sum_{k\in S^c} d_{j,S}d_{j,k} = O(nd^2)$
\item
$\mathbb{E}\sum_{j\in S}\sum_{k\in S^c} d_{j,k}^2 = O(nd)$
\end{enumerate}
\label{useful_moments}
\end{lemma}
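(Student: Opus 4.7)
The plan is to compute each of the six expectations by expanding the edge-count terms as sums over indicator functions of neighbors and then using the elementary fact that for disjoint vertex subsets $U, V \subseteq [n]$ with $|U| = u$, $|V| = v$, the probability $\mathbb{P}[U \subseteq S,\, V \subseteq S^c]$ under the uniform-size-$l$ law equals $\binom{n-u-v}{l-u}/\binom{n}{l}$; this reduces to a falling-factorial ratio such as $l(l-1)(n-l)/(n(n-1)(n-2))$ for $(u,v)=(2,1)$, and similarly for larger patterns. After expansion, the three variables $j,k$ and the auxiliary neighbors $a,b$ must be partitioned by coincidence patterns, each pattern contributing a count (using $d$-regularity, so $\sum_j d = nd$ and the number of ordered neighbor pairs at any vertex is $d(d-1)$) multiplied by the corresponding hypergeometric-type probability. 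Because $|S|=l$ and $|S^c|=n-l$ are deterministic, in items~1 and~2 the outer sum over the inert variable contributes only a factor $n-l$ or $l$ respectively.

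For item~1, write $d_{j,S}^2 = \sum_{a\sim j, b\sim j}\mathbbm{1}[a,b\in S]$ and split into $a=b$ and $a\neq b$. Because $a,b$ are neighbors of $j$ in a simple graph, $a,b\neq j$ automatically, so the case analysis is clean: the diagonal part yields the $d\cdot l(l-1)/(n-1)$ contribution and the off-diagonal part yields $d(d-1)\cdot l(l-1)(l-2)/((n-1)(n-2))$. A one-line rearrangement (using $(n-2)+(d-1)(l-2) = (n-l) + d(l-2)$) matches the stated formula after multiplying by $n-l$. For item~2 the expansion is analogous but now the neighbors $a,b$ of $k\in S^c$ may equal $j\in S$; in that case the constraint $a\in S$ is already enforced by $j\in S$, so the splitting by $a=b$ vs.\ $a\ne b$ (with simple-graph constraint $a,b\neq k$) again gives two clean contributions that assemble into the stated expression via $(n-2)+(d-1)(l-1) = (n-l-1)+d(l-1)$.

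Item~3 admits a slicker route that avoids the four-vertex case analysis entirely. Note the graph identities $\sum_{j\in S} d_{j,S} = 2 e_S$ (twice the edges inside $S$), $\sum_{k\in S^c} d_{k,S} = T$, and $2e_S + T = ld$ (summing degrees over $S$). Hence the double sum factorises as $(2e_S)\cdot T = (ld-T)T$, whose expectation is $ld\,\mathbb{E}[T] - \mathrm{var}(T) - (\mathbb{E}[T])^2$. Substituting $\mathbb{E}[T] = l(n-l)d/(n-1)$ from Lemma~\ref{lem:expectation_stein_pair} and simplifying gives exactly $d^2 l(n-l)\cdot l(l-1)/(n-1)^2 - \mathrm{var}(T)$.

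Items~4, 5, 6 are either almost trivial or a short count. Item~6 is immediate because $d_{j,k}^2 = d_{j,k}$ in a simple graph, so the double sum is exactly $T$ with expectation $l(n-l)d/(n-1) = O(nd)$. For items~4 and~5 we expand the remaining edge-count factor to obtain a sum over ordered triples $(j,k,a)$ satisfying a path-like adjacency pattern ($a\sim k,\, j\sim k$ in item~4; $a\sim j,\, j\sim k$ in item~5); the total number of such triples is at most $nd^2$ by $d$-regularity, while the $S/S^c$ placement probabilities are $O(1)$ in $n$ (coincidence $a=k$ in item~5 is impossible since $a\in S$, $k\in S^c$). The whole sum is therefore $O(nd^2)$, as claimed. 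The only real bookkeeping obstacle is item~2, where the overlap of $j\in S$ with the neighbors of $k$ produces several subcases and one must verify that the pieces collapse to the compact stated expression; the simple-graph constraint $a,b\neq k$ keeps this tractable and is essentially the only nontrivial place where care is required.
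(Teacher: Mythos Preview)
Your proof is correct and matches the paper's approach for items~1--3 and~6 essentially verbatim: the paper also expands $d_{j,S}^2 = \sum_{a,b\in N(j)}\mathbbm{1}[a,b\in S]$ and computes the conditional probabilities for items~1--2, uses the factorisation $\sum_{j\in S}\sum_{k\in S^c} d_{j,S}d_{k,S} = (ld-T)T$ for item~3, and invokes $d_{j,k}^2=d_{j,k}$ for item~6.

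For items~4 and~5 the paper takes a slightly slicker route than your triple-counting argument. Rather than expanding $d_{k,S}$ or $d_{j,S}$ over neighbors and bounding the number of path-patterns by $nd^2$, the paper collapses the inert sum first: since $\sum_{j\in S} d_{j,k} = d_{k,S}$, item~4 becomes $\sum_{k\in S^c} d_{k,S}^2$, which is bounded by $O(nd^2)$ directly from the explicit formula in item~2; and since $\sum_{k\in S^c} d_{j,k} = d_{j,S^c}$, item~5 becomes $\sum_{j\in S} d_{j,S}\,d_{j,S^c} \le d^2\,|S| = O(nd^2)$ deterministically. Your counting argument is equally valid and arguably more transparent about where the $nd^2$ comes from; the paper's identities just avoid any case analysis. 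Either way the bounds are immediate.
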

\begin{proof}
\begin{enumerate}
\item
$$\mathbb{E}\sum_{j\in S}\sum_{k\in S^c} d_{j,S}^2 = \frac{l(n-l)}{n}\sum_{j=1}^{n}\mathbb{E}(d_{j,S}^2| j\in S)$$  
Denoting the neighborhood of $j$ by $N(j)$,

$$\mathbb{E}(d_{j,S}^2| j\in S) = \sum_{a,b \in N(j)} \mathbb{P}(a \in S, b\in S | j \in S)$$ 
A simple computation of the probability gives the result.
\item 
proof similar to the previous part.
\item 
$$\mathbb{E}\sum_{j\in S}\sum_{k\in S^c} d_{j,S}d_{k,S} = \mathbb{E}\left[\left(ld - T(S)\right)T(S)\right] $$
Using the fact that $\mathbb{E}\left[T(S)\right] = d\frac{l(n-l)}{n-1}$ we arrive at the result.
\item
The result follows from the fact that
$$\sum_{j\in S}\sum_{k\in S^c} d_{j,k}d_{k,S} =  \sum_{k\in S^c} d_{k,S}^2$$
\item 
 
$$\sum_{j\in S}\sum_{k\in S^c} d_{j,k}d_{j,S} = \sum_{j \in S} d_{j,S}d_{j,S^c} = O(nd^2)$$

\item 
We note that since $G$ is a $d$-regular graph, $d_{j,k}^2 = d_{j,k}$. Therefore,
$$\mathbb{E}\sum_{j\in S}\sum_{k\in S^c} d_{j,k}^2 = \mathbb{E}d(S,S^c)$$

\end{enumerate}
\end{proof}

\begin{lemma}

\begin{equation}
\mathrm{var}(T) = \frac{1}{2\lambda}\mathbb{E}\left[(T-T^{\prime})^2\right] \,.
\label{eq:stein_pair_variance}
\end{equation}
If $l = \theta(n)$, then, $$\mathrm{var}(T) = 2dn\frac{l^2(n-l)^2}{n^4} + O(d^2)$$

Denoting $l = sn$ and $s \in (0,1)$,  $$\sigma^2 := \mathrm{var}(T) = 2ds^2(1-s)^2( 1 + O(\frac{d}{n}))$$. The $O(\frac{d}{n})$ holds uniformly for all $s \in [\delta,1-\delta]$ when $0<\delta<1-\delta<1$ 
\label{std_dev}
\end{lemma}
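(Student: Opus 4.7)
The first claim \eqref{eq:stein_pair_variance} is the standard Stein-pair variance identity. I would prove it by expanding
\[
\mathbb{E}\bigl[(T-T')^2\bigr] \;=\; 2\,\mathbb{E}[T^2] - 2\,\mathbb{E}[T T'] \;=\; 2\,\mathrm{var}(T) + 2(\mathbb{E}T)^2 - 2\,\mathbb{E}\!\left[T\,\mathbb{E}[T'\mid \mathcal{F}(S)]\right],
\]
then substituting $\mathbb{E}[T'\mid\mathcal{F}(S)] = (1-\lambda)T + \lambda \mathbb{E}T$ from Lemma~\ref{lem:expectation_stein_pair}. After the algebra the constant terms cancel and one is left with $\mathbb{E}[(T-T')^2] = 2\lambda\,\mathrm{var}(T)$, which rearranges to the claimed identity.

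For the second claim I would compute $\mathbb{E}[(T-T')^2]$ explicitly. From the case analysis for $T(S')$ preceding the lemma, when $\chi_S(J)=\chi_S(K)$ the difference vanishes; otherwise $|T-T'|^2 = 4(d_{J,S}-d_{K,S}+d_{J,K})^2$ with the roles of $J,K$ symmetric. Using that $(J,K)$ is uniform on $[n]^2$ and independent of $S$, and exploiting the $j\leftrightarrow k$ symmetry of the two non-trivial cases,
\[
\mathbb{E}[(T-T')^2] \;=\; \frac{8}{n^{2}}\,\mathbb{E}\sum_{j\in S,\,k\in S^{c}} \bigl(d_{j,S}-d_{k,S}+d_{j,k}\bigr)^{2}.
\]
Expanding the square produces six terms $d_{j,S}^2$, $d_{k,S}^2$, $d_{j,k}^2$, $-2d_{j,S}d_{k,S}$, $2d_{j,S}d_{j,k}$, $-2d_{k,S}d_{j,k}$, and each of their expected sums is provided by Lemma~\ref{useful_moments}.

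The critical observation is that Lemma~\ref{useful_moments}(3) contains $\mathrm{var}(T)$ on its right-hand side, so substituting it into the expansion gives a self-referential equation of the form
\[
\mathbb{E}[(T-T')^2] \;=\; \frac{8}{n^{2}}\Bigl[\,\Sigma_{\mathrm{main}} + 2\,\mathrm{var}(T) + O(nd^{2})\,\Bigr],
\]
where $\Sigma_{\mathrm{main}}$ is the combination of the $d_{j,S}^2$, $d_{k,S}^2$ and $-2d_{j,S}d_{k,S}$ contributions. Equating this with $2\lambda\,\mathrm{var}(T) = \tfrac{8(n-1)}{n^{2}}\mathrm{var}(T)$ from step~1 and solving for $\mathrm{var}(T)$ yields
\[
(n-3)\,\mathrm{var}(T) \;=\; \Sigma_{\mathrm{main}} + O(nd^{2}).
\]
What remains is to simplify $\Sigma_{\mathrm{main}}$. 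Substituting the three explicit moments and factoring, the leading $d^{2}$ contributions combine as
\[
l(n-l)\,d^{2}\!\left[\tfrac{(l-1)(l-2) + l(l-1)}{(n-1)(n-2)} - \tfrac{2l(l-1)}{(n-1)^{2}}\right] \;=\; O(d^{2})
\]
(an elementary manipulation shows the bracketed expression is $O(1/n)$), while the $d$ contributions combine to $2d\,s(1-s)\cdot l(n-l)(1+O(1/n)) = 2dn^{2}s^{2}(1-s)^{2}(1+O(1/n))$. Dividing by $n-3$ delivers $\mathrm{var}(T) = 2dn\,\tfrac{l^{2}(n-l)^{2}}{n^{4}} + O(d^{2})$, which is the second claim; the third (with the implicit $n$ restored in the leading factor) follows immediately on writing $l=sn$ and noting that every error in the calculation is controlled uniformly for $s\in[\delta,1-\delta]$ since the denominators $(n-1)(n-2)$, $(n-1)^{2}$ and $n-3$ are bounded away from zero independently of $s$.

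The main obstacle I anticipate is the algebraic cancellation in $\Sigma_{\mathrm{main}}$: at face value each of the three dominant terms is of order $d^{2}n^{2}$, and only after combining them does one see cancellation down to $O(d^{2})$, leaving the linear-in-$d$ contribution from the variance of a single degree $d_{\cdot,S}$ as the true leading term. Careful bookkeeping of the $(n-1)(n-2)$ versus $(n-1)^{2}$ denominators in Lemma~\ref{useful_moments}(1)--(3) is essential so that no spurious $d^{2}n$ term survives.
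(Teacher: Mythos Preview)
Your approach is essentially the same as the paper's: the paper also invokes the Stein-pair variance identity, writes $\mathrm{var}(T)=\tfrac{1}{n-1}\mathbb{E}\sum_{j\in S,k\in S^c}(d_{j,S}+d_{j,k}-d_{k,S})^2$, expands the square, and plugs in Lemma~\ref{useful_moments}, which implicitly produces exactly the self-referential equation you make explicit. One small slip: the combined $d^2$ contribution $l(n-l)d^2\bigl[\cdots\bigr]$ is $O(nd^2)$, not $O(d^2)$, since the bracket is $O(1/n)$ and $l(n-l)=\Theta(n^2)$; this is harmless because it is absorbed into the $O(nd^2)$ error you already carry in $(n-3)\,\mathrm{var}(T)=\Sigma_{\mathrm{main}}+O(nd^2)$ before dividing.
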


\begin{proof}
Equation~\ref{eq:stein_pair_variance} follows from the fact that $(T,T^{\prime})$ forms a $\lambda$-Stein pair.
$$
\mathrm{var}(T) = \frac{n^2}{8(n-1)}\mathbb{E}\left[\mathbb{E}\left[(T-T^{\prime})^2|S\right] \right]= \frac{n^2}{8(n-1)}\left(\mathbb{E}\frac{8}{n^2}\sum_{j\in S}\sum_{k \in S^c} (d_{j,s}+d_{j,k}-d_{k,S})^2\right) $$
$$ = \frac{1}{n-1}\mathbb{E} \sum_{j\in S}\sum_{k \in S^c}(d_{j,s}^2 + d_{k,S}^2-2d_{j,S}d_{k,S} + d_{j,k}^2 + 2d_{j,k}d_{j,S} - 2d_{j,k}d_{k,S})$$

We use Lemma~\ref{useful_moments} to compute this expectation.
\end{proof}

\begin{lemma}
Let $\gamma(s) = \sum_{i=0}^{r}a_i s^i$ be any polynomial such that $ 0 \leq \gamma(s) \leq \alpha $ $\forall \ s \in [s_1,s_2]$ such that $s_1 < s_2$, then $|a_i| \leq  C\alpha$ for some constant $C$ depending only on $r, s_1$ and $s_2$
\label{comparison_bound}
\end{lemma}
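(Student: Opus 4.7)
The plan is to observe that the statement is a general fact about polynomials of bounded degree: on the finite-dimensional vector space $P_r$ of polynomials of degree at most $r$, the sup-norm on any non-degenerate interval and the max-coefficient norm are both norms, hence equivalent, with the equivalence constant depending on $r, s_1, s_2$. Since the claim just asks for an $\alpha$-independent bound, this abstract argument suffices, but because the problem expects an explicit constant, I would instead carry out the more constructive Vandermonde-based argument outlined below.

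Concretely, I would fix once and for all $r+1$ distinct nodes $t_0 < t_1 < \cdots < t_r$ in $[s_1,s_2]$ (for instance equally spaced). Form the Vandermonde matrix $V \in \mathbb{R}^{(r+1)\times (r+1)}$ with entries $V_{ji} = t_j^i$. Writing $a = (a_0,\ldots,a_r)^{\intercal}$ and $y = (\gamma(t_0),\ldots,\gamma(t_r))^{\intercal}$, evaluation at the chosen nodes gives $Va = y$. Since the $t_j$ are distinct, $V$ is invertible, so $a = V^{-1} y$. The bound $0 \leq \gamma(s) \leq \alpha$ on $[s_1,s_2]$ then yields $\|y\|_{\infty} \leq \alpha$, and hence
\begin{equation*}
|a_i| \;\leq\; \|V^{-1}\|_{\infty}\, \|y\|_{\infty} \;\leq\; \|V^{-1}\|_{\infty}\,\alpha
\end{equation*}
for each $i = 0,\ldots,r$. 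The matrix $V^{-1}$ depends only on the chosen nodes $t_0,\ldots,t_r$, which in turn depend only on $r$, $s_1$, and $s_2$; so $C := \|V^{-1}\|_{\infty}$ depends only on $r, s_1, s_2$, as required.

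There is no real obstacle here. The only piece to double-check is that the inverse Vandermonde norm is indeed a function of $r, s_1, s_2$ alone, which is immediate from the explicit formula for $V^{-1}$ in terms of the Lagrange interpolation coefficients at the $t_j$. (Equivalently, one can appeal directly to the abstract equivalence-of-norms argument: the map $\gamma \mapsto (a_0,\ldots,a_r)$ is a linear isomorphism between $P_r$ equipped with the sup-norm on $[s_1,s_2]$ and $\mathbb{R}^{r+1}$ with the $\ell^{\infty}$ norm, and both sides are finite-dimensional Banach spaces, so the isomorphism is bounded.) The Vandermonde route simply makes the constant explicit.
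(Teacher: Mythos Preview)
Your proposal is correct and follows essentially the same argument as the paper: choose $r+1$ distinct nodes in $[s_1,s_2]$, form the Vandermonde system $Va=y$, invert, and take $C=\|V^{-1}\|_{\infty}$. The additional equivalence-of-norms remark is a nice abstract sanity check, but the constructive Vandermonde step is exactly what the paper does.
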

\begin{proof}
Choose distinct $x_i \in [s_1,s_2]$ for $i \in \{0,1,..,r\}$. Let $\bm{a} = \left[a_0\ a_1\ ..\ a_r\right]^{\intercal} $ and $\bm{b} = \left[\gamma(x_0)\ \gamma(x_1)\ ...\ \gamma(x_{r})\right]^{\intercal}$. Consider the Vandermonde matrix with entries $V_{i,j} = x_i^{j}$ for $i,j \in \ \{0,1,...,r\}$. $V$ is invertible since $x_i$ are distinct and $V\bm{a} = \bm{b}$. Therefore, $\bm{a} = V^{-1}\bm{b}$. Therefore $\|a\|_{\infty} \leq \|V^{-1}\|_{\infty}\|b\|_{\infty}$. Since $\|b\|_{\infty} \leq \alpha$, we obtain the result by setting $C =  \|V^{-1}\|_{\infty}$.
\end{proof}

\begin{definition}[function type]
Let $R$ be a subset of vertices of a given graph $G$.
We define the following classification of functions $f(R)$
\begin{enumerate}
\item We call $f$ to be of \textbf{type 1} of index $r \in \mathbb{N}$ if $f(R) = \left(d_{j,R} - d_{k,R} + d_{j,k}\right)^r \mathbbm{1}_{j\in R}\mathbbm{1}_{k \in R^c}  $, 
\item We call $f$ to be of \textbf{type 2} of index $r  \in \mathbb{N}$ if $$f(R) = \left(d_{j_1,R} - d_{k_1,R} + d_{j_1,k_1}\right)^{r_1}\left(d_{j_2,R} - d_{k_2,R} + d_{j_2,k_2}\right)^{r_2} \mathbbm{1}_{j_1\in S}\mathbbm{1}_{k_1 \in R^c} \mathbbm{1}_{j_2\in S}\mathbbm{1}_{k_2 \in S^c}$$ such that $r_1, r_2 \in \mathbb{N}$ and  $r = r_1 + r_2$.
\end{enumerate}
\end{definition}

Since the coordinates of the random set $S$ are dependent (because $|S| = l$), it is hard to bound moments of functions of $S$. Therefore, we draw a random set $\tilde{S}$ such that each vertex is included independently with probability $p=\frac{l}{n}$. As we shall see, $S$ is locally similar to $\tilde{S}$ and hence we can use the known tools for bounding moments of functions of independent variables to bound the moments of $f(S)$.
\begin{lemma}
Let $f$ be a function of type 1 or type 2 with $G$ being a d-regular graph. Then, the following are true. Let $\tau$ by the `type' of the function $f$.
\begin{enumerate}
\item
$f(R) = \sum_{h = 0}^{r+2\tau} g_h(R)$ $\forall R \subset V$
\item
If each vertex is included in the set $\tilde{S}$ independently with probability $p = \frac{l}{n}$, then,
$$\mathbb{E}f\left(\tilde{S}\right)= \sum_{h=0}^{r+2\tau} a_h p^h$$
for some constants $a_h \in \mathbb{Z}$.
\item
If the set $S$ is chosen uniformly at random from all vertex subsets of size $l$, then
$\mathbb{E}f(S) = \sum_{h = 0}^{r+2\tau} a_h \prod_{i=0}^{h-1} \frac{l-i}{n-i}$
\end{enumerate}
Where $g_h(S)$ is a function of the form $\sum_{i\in I} (-1)^{\eta_i} \mathbbm{1}_{S_i \subset S }$. Where $\eta_i \in \{-1,+1\}$, $S_i \subset V$, $|S_i| = h$ and $I_h$ is any finite index set.
\label{poly_form}
\end{lemma}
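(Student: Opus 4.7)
The plan is to rewrite every quantity appearing in $f(R)$ as an integer polynomial in the binary variables $\{\mathbbm{1}_{v\in R}\}_{v\in V}$ and then reduce using the idempotency $\mathbbm{1}_{v\in R}^{k}=\mathbbm{1}_{v\in R}$. Note that $d_{j,R}=\sum_{v\in N(j)}\mathbbm{1}_{v\in R}$ (with $N(j)$ the graph neighborhood), that $d_{j,k}\in\{0,1\}$ is a constant determined by $G$, and that $\mathbbm{1}_{k\in R^{c}}=1-\mathbbm{1}_{k\in R}$. Consequently the expression $(d_{j,R}-d_{k,R}+d_{j,k})^{r}\,\mathbbm{1}_{j\in R}\,\mathbbm{1}_{k\in R^{c}}$ (type~$1$), and likewise the type~$2$ product, is a polynomial with integer coefficients in the variables $\{\mathbbm{1}_{v\in R}\}$.

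Multiplying out and collapsing each monomial $\prod_{v\in T}\mathbbm{1}_{v\in R}$ to $\mathbbm{1}_{T\subset R}$ yields
\[
f(R)=\sum_{T}c_{T}\,\mathbbm{1}_{T\subset R},\qquad c_{T}\in\mathbb{Z},
\]
where $T$ ranges over finite subsets of $V$. Grouping by $h=|T|$ and writing each $c_{T}$ as a signed sum of $\pm1$'s over an index set $I_{h}$ gives exactly the form $g_{h}(R)=\sum_{i\in I_{h}}(-1)^{\eta_{i}}\mathbbm{1}_{S_{i}\subset R}$ with $|S_{i}|=h$. The size bound $h\le r+2\tau$ is obtained by counting: in type~$1$, expanding $(d_{j,R}-d_{k,R}+d_{j,k})^{r}$ by the multinomial theorem produces terms involving at most $r$ vertex-indicators (each $d_{j,k}^{s}$ factor contributes none), and multiplying by $\mathbbm{1}_{j\in R}$ and by $1-\mathbbm{1}_{k\in R}$ adds at most two more vertices, so $|T|\le r+2=r+2\tau$; for type~$2$, the same argument applied to each factor gives $|T|\le(r_{1}+2)+(r_{2}+2)=r+4=r+2\tau$.

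Parts (2) and (3) then follow by term-by-term expectation with the same integer coefficients $a_{h}=\sum_{i\in I_{h}}(-1)^{\eta_{i}}$. For the independent draw $\tilde{S}$, the elementary identity $\mathbb{E}\,\mathbbm{1}_{T\subset\tilde{S}}=p^{|T|}$ collects to $\mathbb{E}f(\tilde{S})=\sum_{h=0}^{r+2\tau}a_{h}p^{h}$. For the uniform draw $S$ of size $l$, one has $\mathbb{E}\,\mathbbm{1}_{T\subset S}=\binom{n-|T|}{\,l-|T|}/\binom{n}{l}=\prod_{i=0}^{|T|-1}\tfrac{l-i}{n-i}$, which yields $\mathbb{E}f(S)=\sum_{h=0}^{r+2\tau}a_{h}\prod_{i=0}^{h-1}\tfrac{l-i}{n-i}$. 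There is no genuine analytic obstacle here; the content of the lemma is an algebraic identity and the only care required is bookkeeping of how many distinct vertices can appear in a single monomial. The point worth emphasizing is that the coefficient sequence $(a_{h})_{h=0}^{r+2\tau}$ is identical in the two models, which is precisely what will later allow moment estimates for the independent set $\tilde{S}$ (together with Lemma~\ref{comparison_bound} to control the $|a_{h}|$) to be transferred to the uniform set $S$.
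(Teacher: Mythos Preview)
Your proposal is correct and follows essentially the same approach as the paper: expand everything via $d_{j,R}=\sum_{v\in N(j)}\mathbbm{1}_{v\in R}$ and $\mathbbm{1}_{k\in R^{c}}=1-\mathbbm{1}_{k\in R}$, collapse products of indicators to $\mathbbm{1}_{T\subset R}$, then take expectations termwise under the two sampling models. If anything, you are more explicit than the paper in justifying the degree bound $h\le r+2\tau$.
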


\begin{proof}
\begin{enumerate}
\item
We use the following identities:
$$d_{j,S} = \sum_{i \in N(j)} \mathbbm{1}_{i \in S}\,.$$
$$\mathbbm{1}_{i \in S^c} = 1 - \mathbbm{1}_{i \in S}\,.$$
Expanding the power and noting that $\mathbbm{1}_{S_1 \subset S} \mathbbm{1}_{S_2 \subset S} = \mathbbm{1}_{S_1 \cup S_2 \subset S}$, we obtain the result.
\item
This follows trivially since $\mathbb{E} \mathbbm{1}_{S_i \subset T} = p^{|S_i|}$ and if $g_h(T)$ is of the form above, $a_h = \sum_{i\in I} (-1)^{\eta_i}$.
\item
This follows from the fact that $\mathbb{E}\mathbbm{1}_{S_i \subset S} = \frac{{{n-|S_i|}\choose{l-|S_i|}}}{{{n}\choose{l}}} = \prod_{i=0}^{h-1} \frac{l-i}{n-i}$, where $h = |S_i|$. If $g_h(T)$ is of the form above, $a_h = \sum_{i\in I_h} (-1)^{\eta_i}$
\end{enumerate}

\end{proof}

\begin{lemma}
If $f$ is of type 1 or 2 for a $d$ regular graph G over $n$ vertices with a fixed index $r$. Let $\tau$ by the `type' of the function.
\begin{enumerate}
\item
$\mathbb{E}f(\tilde{S}) = O\left(d^{\frac{r}{2}}\right)$
\item
$|\mathbb{E}f(\tilde{S}) - \mathbb{E}f\left(S\right)| = O\left(\frac{d^{\frac{r}{2}}}{n} \right)$ when $p = \frac{l}{n}$
\item
$\mathbb{E}f(S) \leq Cd^{\frac{r}{2}}\left(1 + O(\frac{1}{n})\right)$
\end{enumerate}
\label{poly_bound}
\end{lemma}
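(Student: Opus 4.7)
The plan is to prove Part~1 by working with the Bernoulli-inclusion model $\tilde{S}$, where the coordinates are independent, and then to transfer the resulting moment bounds to the uniform-set model $S$ via the polynomial representation from Lemma~\ref{poly_form}.

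For Part~1, I would condition on the indicator constraints. In the type~$1$ case, conditioning on $\{j\in\tilde S,\,k\notin\tilde S\}$ (which has probability $p(1-p)$ and is independent of the remaining inclusion indicators) leaves
\[ W := d_{j,\tilde S}-d_{k,\tilde S}+d_{j,k} \;=\; d_{j,k} + \sum_{i\in N(j)\setminus\{k\}}\mathbbm{1}_{i\in\tilde S} - \sum_{i\in N(k)\setminus\{j\}}\mathbbm{1}_{i\in\tilde S}, \]
i.e., a constant of order $O(1)$ plus a sum of at most $2d$ independent $\mathrm{Ber}(p)$ variables with coefficients in $\{-1,0,+1\}$ (the zero coefficients coming from cancellation on common neighbors of $j$ and $k$). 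Hence the conditional mean is $O(1)$ and the conditional variance is $O(d)$, and a Rosenthal-type bound for sums of independent centered bounded variables gives $\mathbb{E}[|W|^r\mid\cdot]=O(d^{r/2})$ with constants depending only on $r$. Multiplying by $p(1-p)$ yields $|\mathbb{E}f(\tilde S)|=O(d^{r/2})$. For type~$2$, I would condition on all four indicator events, write $f=W_1^{r_1}W_2^{r_2}$, and apply Cauchy--Schwarz with the type~$1$ bound applied at exponents $2r_1$ and $2r_2$ to obtain $|\mathbb{E}f(\tilde S)|\le\sqrt{\mathbb{E}W_1^{2r_1}\mathbb{E}W_2^{2r_2}}=O(d^{(r_1+r_2)/2})=O(d^{r/2})$.

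For Part~2, Lemma~\ref{poly_form} provides the representation
\[ \mathbb{E}f(\tilde S)-\mathbb{E}f(S) \;=\; \sum_{h=0}^{r+2\tau}a_h\!\left(p^h-\prod_{i=0}^{h-1}\frac{l-i}{n-i}\right). \]
Since $\bigl|\tfrac{l-i}{n-i}-p\bigr|=\tfrac{i(n-l)}{n(n-i)}=O(1/n)$ and every factor lies in $[0,1]$, a one-step telescoping gives $\bigl|p^h-\prod_{i=0}^{h-1}\tfrac{l-i}{n-i}\bigr|=O(h/n)=O(1/n)$ uniformly in $h\in\{0,\ldots,r+2\tau\}$. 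To control the coefficients $|a_h|$, I would invoke Part~1 uniformly in $p$: the Rosenthal constants depend only on $r$, and the factor $p(1-p)$ is controlled on any compact subinterval of $(0,1)$, so $|\mathbb{E}f(\tilde S)|\le Cd^{r/2}$ holds uniformly for $p\in[\delta,1-\delta]$ for any fixed $\delta>0$. The absolute-value version of Lemma~\ref{comparison_bound} --- whose Vandermonde proof uses only an $L^\infty$ bound on the polynomial, not nonnegativity --- then yields $|a_h|\le C'd^{r/2}$ for every $h$ in the bounded index range, and summing over the $O(1)$ terms gives $|\mathbb{E}f(\tilde S)-\mathbb{E}f(S)|=O(d^{r/2}/n)$.

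Part~3 follows at once from Parts~1 and~2 via the triangle inequality $|\mathbb{E}f(S)|\le|\mathbb{E}f(\tilde S)|+|\mathbb{E}f(\tilde S)-\mathbb{E}f(S)|=Cd^{r/2}(1+O(1/n))$. The main subtlety I anticipate lies in Part~2: one must verify that the Rosenthal-type bound in Part~1 has constants uniform in $p\in[\delta,1-\delta]$, since otherwise the polynomial $p\mapsto\mathbb{E}f(\tilde S)$ cannot be controlled on a nondegenerate interval and the Vandermonde inversion in Lemma~\ref{comparison_bound} gives nothing useful. Once this uniformity is in place, the telescoping bound on $|p^h-\prod\tfrac{l-i}{n-i}|$ and the assembly of the three parts are routine.
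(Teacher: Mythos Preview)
Your proposal is correct and takes essentially the same approach as the paper: bound moments under the i.i.d.\ model $\tilde S$, then transfer to $S$ via the polynomial representation of Lemma~\ref{poly_form} together with the Vandermonde coefficient bound of Lemma~\ref{comparison_bound}. The only minor technical difference is in Part~1: the paper drops the indicator factors (bounding them by $1$), uses Minkowski's inequality to separate $d_{j,\tilde S}$ and $d_{k,\tilde S}$, and then applies McDiarmid's inequality to obtain sub-Gaussian tails (hence moment bounds) for each term, whereas you condition on the indicator events and invoke a Rosenthal-type inequality---both are routine and yield the same $O(d^{r/2})$. Since McDiarmid's variance proxy depends only on $d$ and not on $p$, the paper's bound in Part~1 is automatically uniform over all $p\in[0,1]$; your restriction to $p\in[\delta,1-\delta]$ and the attendant worry are therefore unnecessary, though harmless, since Lemma~\ref{comparison_bound} only requires control on a nondegenerate subinterval.
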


\begin{proof}

\begin{enumerate}
\item
Let $f$ be of type 1. Then,
\begin{align}
|\mathbb{E}f(\tilde{S})| &\leq \mathbb{E} |d_{j,\tilde{S}} - d_{k,\tilde{S}} + d_{j,k}|^r \nonumber \\
&\leq \left( 1 + 2\left(\mathbb{E} |d_{j,\tilde{S}} - \mathbb{E}d_{j,\tilde{S}} |^r\right)^{\frac{1}{r}} \right)^r \label{minkow_bound}
\end{align}
Where the inequalities above follow from Minkowski's inequality and the fact that $d_{j,\tilde{S}}$ and $d_{k,\tilde{S}}$ are identically distributed.

$d_{j,\tilde{S}}$ is a $1$ Lipschitz function of $\tilde{S}$ with respect to Hamming distance. 
We use MacDiarmid's inequality to conclude that $$\mathbb{P}(|d_{j,\tilde{S}} - \mathbb{E}d_{j,\tilde{S}}| > t ) \leq 2 \exp^{-\frac{2t^2}{d}} $$
From the above, we obtain the estimate: $$\mathbb{E}|d_{j,\tilde{S}} - \mathbb{E}d_{j,\tilde{S}} |^r \leq \int_{0}^{\infty}2r t^{r-1} e^{-\frac{2t^2}{d}} = \left(\frac{r\Gamma(\frac{r}{2})}{4}\right) d^{\frac{r}{2}} = O(d^{\frac{r}{2}})$$

Plugging it back into equation~\eqref{minkow_bound}, we obtain the result.

For any type 2 function $g$, we use Cauchy Schwarz inequality to note that:

\begin{align*}
|\mathbb{E}g(\tilde{S})| &\leq \mathbb{E} |d_{j_1,\tilde{S}} - d_{k_1,\tilde{S}} + d_{j_1,k_1}|^{r_1} |d_{j_2,\tilde{S}} - d_{k_2,\tilde{S}} + d_{j_2,k_2}|^{r_2} \\
&\leq \sqrt{\mathbb{E} |d_{j_1,\tilde{S}} - d_{k_1,\tilde{S}} + d_{j_1,k_1}|^{2r_1}} \sqrt{\mathbb{E} |d_{j_2,\tilde{S}} - d_{k_2,\tilde{S}} + d_{j_2,k_2}|^{2r_2}}\\
\end{align*}

And note that $\sqrt{\mathbb{E} |d_{j_i,\tilde{S}} - d_{k_i,\tilde{S}} + d_{j_i,k_i}|^{2r_i}} = O\left(d^{\frac{2r_i}{2}}\right)$ for $i = 1,2$, as shown above, to conclude the result.

\item
We use Lemma \ref{poly_form} to conclude that $\mathbb{E}f(\tilde{S})= \sum_{h=0}^{r+2\tau} a_h p^h = L(p)$ . Using the result in part 1, we conclude that for some absolute constant depending only on r, $L(p) \leq \alpha := Cd^{\frac{r}{2}}$ for every $p \in [0,1]$. We then invoke Lemma \ref{comparison_bound} to show that $|a_h| \leq C_1\alpha$ for all $h \in \{0,1,...,r+2\tau\}$ and that 
$$|\mathbb{E}f(\tilde{S}) - \mathbb{E}f(S)| \leq \sum_{h=0}^{r+2\tau} |a_h| \left|\left(\frac{l}{n}\right)^h - \prod_{i=0}^{h-1} \frac{l-i}{n-i}\right|$$ For a fixed $r$, $|\left(\frac{l}{n}\right)^h - \prod_{i=0}^{h-1} \frac{l-i}{n-i}| = O(\frac{1}{n})$ for every $l \leq n$. Therefore,  

$$|\mathbb{E}f(\tilde{S}) - \mathbb{E}f(S)| \leq \left(\frac{C_2}{n}\right)d^{\frac{r}{2}}$$

\item
This follows from parts 1 and 2.
\end{enumerate}
\end{proof}

Using the fact that the co-ordinates of the vector $\chi_S$ are weakly dependent, we prove the following bound on the expectation of type 1 and type 2 functions. This gives an explicit bound on the constant $C(r)$ for every $l$, which will be useful when proving concentration inequalities for $d(S,S^c)$.

\begin{lemma}
\label{poly_bound_alternate}
If $f$ is a function of type 1 or type 2 of index $r$ and $0\leq l\leq n $ then

$$\mathbb{E}|f(S)| \leq C(r)d^{\frac{r}{2}}$$  where $C(r)$ is a constant depending only on $r$.
\end{lemma}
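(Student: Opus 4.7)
The plan is to proceed in two stages: first establish a bound on $|\mathbb{E} f(S)|$ that holds uniformly in $l \in \{0,1,\dots,n\}$, and then promote this to $\mathbb{E}|f(S)|$ via Cauchy--Schwarz. The key observation enabling the first stage is that the bound on $\mathbb{E}|f(\tilde S)|$ from part (1) of Lemma~\ref{poly_bound} actually holds uniformly in the inclusion probability $p \in [0,1]$: the only nontrivial step is the McDiarmid tail bound on $d_{j,\tilde S} - \mathbb{E} d_{j,\tilde S}$, which depends only on the bounded-differences constant (at most $1$ on each of the $d$ relevant coordinates) and not on $p$.

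First I would apply Lemma~\ref{poly_form} to write $f(R) = \sum_{h=0}^{r+2\tau} g_h(R)$ where $\tau \in \{1,2\}$ is the `type', yielding simultaneously $\mathbb{E} f(\tilde S) = \sum_{h=0}^{r+2\tau} a_h p^h$ and $\mathbb{E} f(S) = \sum_{h=0}^{r+2\tau} a_h \prod_{i=0}^{h-1} \tfrac{l-i}{n-i}$ with the same integer coefficients $a_h$. By the uniform-in-$p$ observation, the polynomial $L(p) := \sum_h a_h p^h$ satisfies $|L(p)| \le C_1(r)\, d^{r/2}$ for every $p\in[0,1]$. Now fix once and for all $r+2\tau+1$ distinct evaluation points $x_0,\dots,x_{r+2\tau} \in [0,1]$, depending only on $r$; Lemma~\ref{comparison_bound} then gives $|a_h| \le C_2(r)\, d^{r/2}$ for every $h$, where $C_2(r)$ is $C_1(r)$ times the $\ell_\infty$-norm of the inverse of the (fixed) Vandermonde matrix at these nodes. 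Since $\prod_{i=0}^{h-1}\tfrac{l-i}{n-i} \in [0,1]$ for every $l\in\{0,\dots,n\}$, summing yields $|\mathbb{E} f(S)| \le (r+2\tau+1)\, C_2(r)\, d^{r/2}$, with a constant depending only on $r$.

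To promote this to $\mathbb{E}|f(S)|$, I use that $f^2$ is itself a function of the same type with index $2r$: for type 1, $f^2 = (d_{j,R}-d_{k,R}+d_{j,k})^{2r}\mathbbm{1}_{j\in R}\mathbbm{1}_{k\in R^c}$ is type 1 of index $2r$, and for type 2 with split $(r_1,r_2)$, $f^2$ is type 2 with split $(2r_1,2r_2)$ and index $2r$. Applying the previous step to $f^2$ (which is nonnegative, so $\mathbb{E} f^2 = |\mathbb{E} f^2|$) gives $\mathbb{E} f(S)^2 \le C_3(2r)\, d^{\,r}$, and then Cauchy--Schwarz yields $\mathbb{E}|f(S)| \le \sqrt{\mathbb{E} f(S)^2} \le \sqrt{C_3(2r)}\, d^{r/2}$, so we set $C(r) := \sqrt{C_3(2r)}$. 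The main obstacle is verifying that the Vandermonde step genuinely produces constants depending only on $r$: this hinges on the uniform-in-$p$ bound for $L(p)$, which lets us evaluate at a fixed set of nodes in $[0,1]$ regardless of the regime of $l$ (in particular, regardless of whether $l/n$ is bounded away from $0$ and $1$), so the Vandermonde inverse is a single fixed matrix. The passage from $|\mathbb{E} f|$ to $\mathbb{E}|f|$ is then essentially free, since squaring preserves the type structure with the correct shift in index.
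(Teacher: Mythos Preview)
Your argument is correct and proves the lemma as stated. The key steps---that the McDiarmid bound in Lemma~\ref{poly_bound}(1) is uniform in $p\in[0,1]$, that Lemma~\ref{comparison_bound} (with the trivial extension from $0\le\gamma\le\alpha$ to $|\gamma|\le\alpha$, which the paper itself uses) then pins down the coefficients $a_h$, that the falling-factorial products lie in $[0,1]$ for every $l$, and that squaring preserves the type with index doubled---are all sound.

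This is, however, a genuinely different route from the paper's. The paper does not go through the polynomial/Vandermonde comparison at all here; instead it works directly on the dependent vector $S$, observing that $|f(S)|\le g_{jk}(S)^r$ where $g_{jk}$ depends only on at most $2(d-1)$ coordinates of $S$, and then invokes a concentration inequality for weakly dependent variables (Chatterjee's dependency-matrix method) to show $g_{jk}$ is sub-Gaussian with variance proxy $O(d)$. Your approach is more elementary and reuses Lemmas~\ref{poly_form}--\ref{poly_bound} almost for free. What the paper's approach buys is an explicit growth rate in $r$: it yields $\mathbb{E}\,g_{jk}^{2q}\le 2(q!)(2d(1+o(1)))^q$ (Equation~\eqref{moment_sub_gaussian_bound}), i.e.\ factorial growth in $q$. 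This is not cosmetic---that bound is exactly what is summed later in Appendix~\ref{sec: quadratic_form_concentration_proof} to obtain the sub-exponential MGF bound~\eqref{reverse_jensen}. Your constant $C(r)$ inherits the $\ell_\infty$-norm of the inverse of a $(r{+}2\tau{+}1)\times(r{+}2\tau{+}1)$ Vandermonde matrix with nodes in $[0,1]$, which typically grows super-exponentially in the degree; with that growth the MGF series in the downstream argument would diverge. So your proof establishes the lemma, but the paper's proof is doing double duty by also supplying the quantitative input for Section~\ref{sec:burkholder_subexp}.
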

\begin{proof}
It is suffient to prove this result for type 1 functions since this implies the result for type 2 functions through Cauchy-Schwarz inequality. Also, it is sufficient to prove this result when $r$ is even since an application of Jensen's inequality for the concave function $x^{\frac{r-1}{r}}$ implies the result for odd integers.
Assume $r$ is even and $f$ is a type 1 function defined by:
$$f(S) = (d_{j,S}-d_{k,S} + d_{j,k})^r \mathbbm{1}_{j \in S}\mathbbm{1}_{k \in S^c}$$
Define variable $y_i(S) := \mathbbm{1}_{i \in S} $. We note that,

\begin{align*}
f(S)&= \left(\sum_{i\in N(j)\setminus {k}} y_i - \sum_{i_1 \in N(k)\setminus{j}} y_{i_1}\right)^r \mathbbm{1}_{j \in S}\mathbbm{1}_{k \in S^c} \\
 &\leq \mathbb{E}\biggr \rvert\sum_{i\in N(j)\setminus {k}} y_i - \sum_{i_1 \in N(k)\setminus{j}} y_{i_1}\biggr \rvert^r \mathbbm{1}_{j \in S}\mathbbm{1}_{k \in S^c} \\
&\leq \mathbb{E}\biggr \rvert\sum_{i\in N(j)\setminus {k}} y_i - \sum_{i_1 \in N(k)\setminus{j}} y_{i_1}\biggr \rvert^r
\end{align*}

Define $$g_{jk}(S) = \sum_{i\in N(j)\setminus {k}} y_i - \sum_{i_1 \in N(k)\setminus{j}} y_{i_1}$$
$g_{jk}$ is a function of $(y_i)_{i\in D_{jk}}$ where $D_{j,k}= \left(N(j)\setminus\{k\}\right)\Delta \left(N(k)\setminus\{j\}\right)$ and $|D_{j,k}| := h \leq 2(d-1)$. $g_{jk}$ is $1$ Lipschitz in Hamming distance.

We follow the concentration inequalities as given in Section 4.2 of  \cite{chatterjee2005concentration}. 
We fix $j$ and $k$ such that $j \neq k$. $y_{\sim r} := (y_i : i \in D_{j,k}\setminus r)$. Let $\mu_i$ be the law of $y_i$. Define the dependency matrix $L = (a_{rs})$  $r,s \in D_{j,k}$ to be a matrix such that $$ d_{\mathrm{TV}}(\mu_{r}(.|y_{\sim r}),\mu_{r}(.|\hat{y}_{\sim r})) \leq \sum_{s \in D_{j,k}} a_{rs}\mathbbm{1}_{y_s \neq \hat{y}_s }$$

Let $h_1 =  d_H(y_{\sim r})$ and $h_2 = d_H(y_{\sim r})$. We consider two cases:

\begin{enumerate}
\item  $l > h$

 \begin{align*}
d_{\mathrm{TV}}(\mu_{r}(.|y_{\sim r}),\mu_{r}(.|\hat{y}_{\sim r})) &= |\mu_{r}(1|y_{\sim r}) - \mu_{r}(1|\hat{y}_{\sim r})| \\
&= \left| \frac{{{n-h}\choose{l-h_1-1}}}{{{n-h+1}\choose{l-h_1}}}  - \frac{{{n-h}\choose{l-h_2-1}}}{{{n-h+1}\choose{l-h_2}}}\right| \\
&= \left|\frac{h_1-h_2}{n-h+1}\right| \\
&\leq \sum_{s \in D_{j,k}\setminus\{r\}} \frac{1}{n-h+1} \mathbbm{1}_{y_s \neq \hat{y}_s }
\end{align*}

\item $l \leq h$ \\
This is similar to the previous case. It is clear that $d_{\mathsf{H}}(y_{\sim r}) \leq l$ a.s. 
Therefore, simple calculation shows that 
\begin{equation}
\mu_{r}(1|y_{\sim r}) = 
\begin{cases}
0 &\quad \text{if $h_1 = l$} \\
\frac{l-h_1}{n-h+1} &\quad \text{ if $h_1 < l$}
\end{cases}
\end{equation}
Proceeding similar to the previous case, we conclude the result.
\end{enumerate}

Therefore, we set $a_{rs} = \frac{1}{n-h+1}$ when $r\neq s$ and $a_{rr} = 0$. $A$ is a symmetric matrix. Therefore, $\|A\|_2 \leq \|A\|_1 = \frac{h-1}{n-h+1}$. 
Applying theorem 4.3 from \cite{chatterjee2005concentration}, we have 
\begin{equation}
\mathbb{P}(|g_{j,k} -\mathbb{E}(g_{j,k})| > t) \leq 2\exp{\left(-\left(\frac{1-\frac{h-1}{n-h+1}}{h}\right)t^2\right)}
\label{eq:weakly_dependent_concentration}
\end{equation}

Since $h \leq 2(d-1) = o(n)$, we conclude that $g_{j,k}$ is subgaussian with a variance proxy of $\frac{h}{2}(1+o(1))$. We also note that $\mathbb{E}(g_{j,k}) = 0$. We can bound the centralised moments of a sub-Gaussian random variable from Equation~\eqref{eq:weakly_dependent_concentration} as shown in~\cite{boucheron2013concentration} Theorem~2.1 : 

\begin{equation}
\mathbb{E}(g_{jk})^{2q} \leq 2(q!)[h(1+o(1))]^q  \leq 2(q!)[2d(1+o(1))]^q\,,
\label{moment_sub_gaussian_bound}
\end{equation}
where $q \in \mathbb{N}$ is arbitrary. Taking $r = 2q$ yields the result.
\end{proof}

Let $Y(S) : = \frac{T(S) - \mathbb{E}T(S)}{\sigma}$. We intend to apply Theorem~\ref{stein_clt} to the Stein pair $(Y,Y^{\prime})$ when $d = o(n)$. 

We first bound the term $\frac{\mathbb{E}\left(|Y-Y^{\prime}|^3\right)}{3\lambda}$ in the following lemma. 

\begin{lemma}
\label{stein_bound_1}
$\forall \  s \in \ (\delta,1-\delta)$ such that $0<\delta < \frac{1}{2}$, we have $$\frac{\mathbb{E}\left(|Y-Y^{\prime}|^3\right)}{3\lambda} = O\left(\sqrt{\frac{1}{n}}\right)$$ and the bound is uniform for all $s \in \ (\delta,1-\delta)$.
\end{lemma}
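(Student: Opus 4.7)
The plan is to write $Y-Y' = (T-T')/\sigma$, so the quantity in question is
\[
\frac{\mathbb{E}|Y-Y'|^3}{3\lambda} \;=\; \frac{\mathbb{E}|T-T'|^3}{3\lambda\,\sigma^3}.
\]
I would first expand $\mathbb{E}|T-T'|^3$ using the explicit description of $T(S')-T(S)$ given just before the statement. Conditioning on $S$ and taking the average over the independent uniform indices $J,K\in[n]$, the case $\chi_S(J)=\chi_S(K)$ contributes nothing, and the two remaining cases are symmetric, so
\[
\mathbb{E}\bigl[|T-T'|^3 \,\big|\, S\bigr] \;=\; \frac{16}{n^2}\sum_{j,k} |d_{j,S}-d_{k,S}+d_{j,k}|^3\,\mathbbm{1}_{j\in S}\mathbbm{1}_{k\in S^c}.
\]
Each summand is a type 1 function in the sense of Lemma~\ref{poly_form} with index $r=3$ (the absolute value is handled by Jensen's inequality applied to $x^{2/3}$, as in the proof of Lemma~\ref{poly_bound_alternate}).

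The main computational step is then to bound this sum in expectation. By Lemma~\ref{poly_bound_alternate} applied with $r=3$, each term satisfies
\[
\mathbb{E}\bigl[|d_{j,S}-d_{k,S}+d_{j,k}|^3\,\mathbbm{1}_{j\in S}\mathbbm{1}_{k\in S^c}\bigr] \;\leq\; C\, d^{3/2},
\]
uniformly in $s$. Summing over the $n^2$ pairs $(j,k)$ and using the $1/n^2$ prefactor yields $\mathbb{E}|T-T'|^3 \leq 16C\, d^{3/2}$. For the denominator, Lemma~\ref{std_dev} gives $\sigma^2 = 2nd\,s^2(1-s)^2(1+O(d/n))$, and on the interval $s\in(\delta,1-\delta)$ we have $s^2(1-s)^2 \geq \delta^4(1-\delta)^4$, so $\sigma^3 \geq c(\delta)\,(nd)^{3/2}$ for some $c(\delta)>0$ and all $n$ sufficiently large, uniformly in $s$. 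Since $\lambda = 4(n-1)/n^2 \sim 4/n$, combining these estimates gives
\[
\frac{\mathbb{E}|T-T'|^3}{3\lambda\,\sigma^3} \;=\; O\!\left(\frac{d^{3/2}}{(1/n)\cdot(nd)^{3/2}}\right) \;=\; O\!\left(\frac{1}{\sqrt{n}}\right),
\]
with the implied constant depending only on $\delta$.

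The only point that requires care is making sure the appeal to Lemma~\ref{poly_bound_alternate} really holds uniformly in $s$: since that lemma's bound is independent of $l=sn$, this is automatic. Likewise, uniformity of the variance lower bound is guaranteed by restricting to $s\in(\delta,1-\delta)$, which bounds $s^2(1-s)^2$ away from zero. I do not expect any genuine obstacle; the main work has already been done in Lemmas~\ref{std_dev} and~\ref{poly_bound_alternate}, and this step is a clean application of them.
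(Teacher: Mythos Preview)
Your proof is correct and follows the same route as the paper: expand $\mathbb{E}|T-T'|^3$ via the exchangeable pair, bound each summand by $O(d^{3/2})$, and divide by $\lambda\sigma^3 = \Theta(n^{1/2}d^{3/2})$. The paper cites Lemma~\ref{poly_bound} rather than Lemma~\ref{poly_bound_alternate} for the $O(d^{3/2})$ estimate, but since both yield the same bound (and your choice handles the absolute value more directly), this difference is immaterial.
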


\begin{proof}
Using Lemma ~\ref{std_dev},

\begin{equation}
\frac{\mathbb{E}\left(|Y-Y^{\prime}|^3\right)}{3\lambda} = \frac{\mathbb{E}\left(|T-T^{\prime}|^3\right)}{3\lambda \sigma^{3}} = \frac{C\mathbb{E}\left(|T-T^{\prime}|^3\right)(1+O\left(\frac{d}{n}\right))}{\sqrt{n}d^{\frac{3}{2}}s^3(1-s)^3}
\label{eq:bound_y_1}
\end{equation}
Conditioning on $S$, 
\begin{align*}
\mathbb{E}\left(|T-T^{\prime}|^3\right) &= \mathbb{E} \frac{16}{n^2} \sum_{j \in S} \sum_{k \in S^c} |d_{j,S} - d_{k,S} + d_{j,k}|^3\\
& = \frac{16}{n^2} \sum_{j \in V} \sum_{k \in V} \mathbb{E}\left(|d_{j,S} - d_{k,S} + d_{j,k}|^3 \mathbbm{1}_{j \in S}\mathbbm{1}_{k\in S^c}\right) \\
&=  O\left(d^{\frac{3}{2}}\right)
\end{align*}
Where we get the last relation using Lemma ~\ref{poly_bound}. Substituting in Equation~\ref{eq:bound_y_1}, we conclude the result.
\end{proof}

We now bound the second term.
Since $|Y-Y^{\prime}| = \frac{|T - T^{\prime}|}{\sigma}$. Therefore, 
\begin{align}
 \frac{\sqrt{\mathrm{var}(\mathbb{E}((Y^{\prime}-Y)^2|S))}}{\sqrt{2\pi}\lambda} &= \frac{1}{\sqrt{2\pi}\lambda \sigma^2}\sqrt{\mathrm{var}(\mathbb{E}((T^{\prime}-T)^2|S))}   \nonumber \\
  &= \frac{1}{\sqrt{2\pi}\lambda  \sigma^2}\sqrt{\mathrm{var}(\mathbb{E}((T^{\prime}-T)^2|S))}  \nonumber \\
  &= \frac{1}{\sqrt{2\pi}\lambda  \sigma^2}\sqrt{\mathrm{var}\left(\frac{8}{n^2}\sum_{j\in S}\sum_{k \in S^c}(d_{j,S}+d_{j,k}-d_{k,S})^2\right)} \nonumber \\
 &= \sqrt{\frac{2}{\pi}}\frac{1}{ (n-1) \sigma^2}\sqrt{\mathrm{var}\left(\sum_{j\in V}\sum_{k \in V}(d_{j,S}+d_{j,k}-d_{k,S})^2\mathbbm{1}_{j\in S}\mathbbm{1}_{k \in S^c}\right)}  \label{second_quantity}
\end{align} 

For $j,k \in V$, define $h_{j,k} (R) := (d_{j,R}+d_{j,k}-d_{k,R})^2\mathbbm{1}_{j\in R}\mathbbm{1}_{k \in R^c}$. Clearly,

\begin{equation}
 \mathrm{var}\left(\sum_{j\in V}\sum_{k \in V}(d_{j,S}+d_{j,k}-d_{k,S})^2\mathbbm{1}_{j\in S}\mathbbm{1}_{k \in S^c}\right) = \sum_{j,k,j_1,k_1 \in V} \mathrm{cov}(h_{j,k}(S),h_{j_1,k_1}(S))
 \label{var_covar_1}
\end{equation}

Using Lemma ~\ref{poly_bound}, when $p = \frac{l}{n}$,
\begin{equation}
\mathrm{cov}(h_{j,k}(S),h_{j_1,k_1}(S)) = \mathrm{cov}(h_{j,k}(\tilde{S}),h_{j_1,k_1}(\tilde{S})) + O(\frac{d^2}{n})
\label{covar_compare}
\end{equation}

Using equations \ref{var_covar_1} and \ref{covar_compare} we conclude
\begin{align}
\mathrm{var}\left(\sum_{j\in V}\sum_{k \in V}(d_{j,S}+d_{j,k}-d_{k,S})^2\mathbbm{1}_{j\in S}\mathbbm{1}_{k \in S^c}\right) &= 
\mathrm{var}\left(\sum_{j\in V}\sum_{k \in V}(d_{j,\tilde{S}}+d_{j,k}-d_{k,\tilde{S}})^2\mathbbm{1}_{j\in \tilde{S}}\mathbbm{1}_{k \in \tilde{S}^c}\right) \nonumber \\ &+ O (n^3d^2) \label{var_compare}
\end{align}

\begin{lemma}
\label{stein_bound_2}
$$\mathrm{var}\left(\sum_{j\in V}\sum_{k \in V}(d_{j,\tilde{S}}+d_{j,k}-d_{k,\tilde{S}})^2\mathbbm{1}_{j\in \tilde{S}}\mathbbm{1}_{k \in \tilde{S}^c}\right)  = O(n^3d^3)$$ uniformly for all $p \in [0,1]$. Using equation ~\ref{var_compare}, we conclude that $\forall s \in [a,b]$ with $0<a<b<1$, 
$$\mathrm{var}\left(\sum_{j\in V}\sum_{k \in V}(d_{j,S}+d_{j,k}-d_{k,S})^2\mathbbm{1}_{j\in S}\mathbbm{1}_{k \in S^c}\right) = O(n^3d^3)$$ uniformly.
\end{lemma}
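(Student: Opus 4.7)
Write $h_{jk}(\tilde S):=(d_{j,\tilde S}+d_{j,k}-d_{k,\tilde S})^2\mathbbm{1}_{j\in\tilde S}\mathbbm{1}_{k\in\tilde S^c}$ and $H(\tilde S):=\sum_{j,k\in V}h_{jk}(\tilde S)$. Because $\tilde S$ has independent entries, my plan is to control $\mathrm{var}(H(\tilde S))$ via the Efron--Stein inequality and then use~\eqref{var_compare} to transfer the estimate to the uniform-size model $S$. Writing $\tilde S^{(v)}$ for the configuration obtained from $\tilde S$ by independently resampling the indicator at $v$, Efron--Stein gives
\[\mathrm{var}(H(\tilde S))\;\le\;\tfrac12\sum_{v\in V}\mathbb{E}\bigl[(H(\tilde S)-H(\tilde S^{(v)}))^2\bigr].\]
Since $h_{jk}$ depends only on the indicators inside $V_{jk}:=\{j,k\}\cup N(j)\cup N(k)$, the resample difference picks up contributions only from pairs $(j,k)$ with $v\in V_{jk}$. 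A direct count in a $d$-regular graph shows that for each fixed $v$ the number of such pairs is $m_v=O(nd)$, with $O(n)$ of them being ``central'' ($v\in\{j,k\}$) and $O(nd)$ being ``neighboring'' ($v\in N(j)\cup N(k)\setminus\{j,k\}$).

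The heart of the argument is a per-pair bound on the discrete derivative $\Delta_{jk}^{(v)}:=h_{jk}(\tilde S)-h_{jk}(\tilde S^{(v)})$, for which I let $X_{jk}:=d_{j,\tilde S}-d_{k,\tilde S}+d_{j,k}$. When $v\in\{j,k\}$ the indicator factor toggles, and when $j\sim k$ one of the degrees additionally shifts by $\pm 1$; case analysis gives $|\Delta_{jk}^{(v)}|\le C(X_{jk}^{2}+1)$. When instead $v\in (N(j)\cup N(k))\setminus\{j,k\}$, the indicator factor is unchanged while exactly one of $d_{j,\tilde S},d_{k,\tilde S}$ shifts by $\pm 1$, giving $|\Delta_{jk}^{(v)}|\le 2|X_{jk}|+1$. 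Since $X_{jk}$ is a signed sum of at most $2d$ independent Bernoullis centered at a quantity of size $O(1)$, uniform-in-$p$ binomial moment bounds (or, equivalently, Lemma~\ref{poly_bound_alternate}) yield $\mathbb{E}[X_{jk}^2]=O(d)$ and $\mathbb{E}[X_{jk}^4]=O(d^2)$. Thus $\mathbb{E}[(\Delta_{jk}^{(v)})^2]=O(d^2)$ for the $O(n)$ central pairs and $O(d)$ for the $O(nd)$ neighboring pairs.

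Aggregating these estimates by Cauchy--Schwarz,
\[\mathbb{E}\bigl[(H(\tilde S)-H(\tilde S^{(v)}))^2\bigr]\;\le\;m_v\!\!\sum_{(j,k):\,v\in V_{jk}}\!\!\mathbb{E}[(\Delta_{jk}^{(v)})^2]\;=\;O(nd)\bigl(O(n\cdot d^2)+O(nd\cdot d)\bigr)\;=\;O(n^2 d^3),\]
and summing over $v\in V$ gives $\mathrm{var}(H(\tilde S))=O(n^3 d^3)$, uniformly in $p\in[0,1]$. Plugging this into~\eqref{var_compare} yields $\mathrm{var}(H(S))=O(n^3d^3)+O(n^3d^2)=O(n^3d^3)$ on compact subintervals of $s\in(0,1)$, as claimed.

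The main obstacle is that the crude bound $|h_{jk}|=O(d^2)$ combined with the $O(n^3d^2)$ count of nonzero covariances only yields $O(n^3d^4)$, which is off by a factor of $d$. The missing $\sqrt d$ gain per pair comes from noticing that in the dominant (``neighboring'') case the discrete derivative scales like $|X_{jk}|$, which concentrates on the $\sqrt d$ scale, rather than like $X_{jk}^2$; making this scaling quantitative and checking that the ``central'' sub-case with $j\sim k$ does not ruin it (where both the indicator and a degree move simultaneously) is the only delicate bookkeeping required.
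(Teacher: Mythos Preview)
Your proposal is correct and follows essentially the same route as the paper: both apply Efron--Stein to $H(\tilde S)$, split the per-vertex contribution into the ``central'' case $v\in\{j,k\}$ (with $\mathbb{E}(\Delta_{jk}^{(v)})^2=O(d^2)$, $O(n)$ pairs) and the ``neighboring'' case $v\in N(j)\cup N(k)\setminus\{j,k\}$ (with $\mathbb{E}(\Delta_{jk}^{(v)})^2=O(d)$, $O(nd)$ pairs), and then aggregate to $O(n^2d^3)$ per vertex. The only cosmetic differences are that the paper uses the $L^2$ triangle inequality $\bigl\|\sum\Delta\bigr\|_2\le\sum\|\Delta\|_2$ instead of your $(\sum a_i)^2\le m\sum a_i^2$ (both yield the same bound here), and the paper writes the Efron--Stein comparison as $h_{j,k}(\tilde S_i)-h_{j,k}(\tilde S_i\cup\{i\})$ rather than via resampling; also note that when $v\in N(j)\cap N(k)$ both degrees shift and $\Delta_{jk}^{(v)}=0$, so your ``exactly one'' phrasing is slightly imprecise but harmless.
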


\begin{proof}
The elements of $\tilde{S}$ are drawn i.i.d with probability of inclusion $p$. Define $\epsilon_i = \mathbbm{1}_{i \in \tilde{S}}$. Then, $\epsilon_i \sim \mathrm{Ber}(p)$ i.i.d for $1 \leq i \leq n$. We use $\bm{\epsilon}$ and $\tilde{S}$ interchangeably.

$$F(\bm{\epsilon}) := F(\tilde{S}) = \sum_{j\in V}\sum_{k \in V} h_{j,k}(\tilde{S}) $$

Let $\tilde{S}_i := \tilde{S} \setminus\{i\}$ and $\Delta^{i}_{j,k}\left(\tilde{S}_i\right) := h_{j,k}\left(\tilde{S}_i\right) - h_{j,k}\left(\tilde{S}_i \cup \{i\}\right)$. Since entries of the vector $\bm{\epsilon}$ are independent, we use Efron-Stein method to tensorize the variance as follows:

$$\mathrm{var}(F(\bm{\epsilon})) \leq \sum_{i=1}^{n} \mathbb{E}\mathrm{var}_i \left(F(\bm{\epsilon})\right)$$
Where $\mathrm{var}_i \left(F(\bm{\epsilon})\right) = \mathrm{var} \left(F(\bm{\epsilon})|\epsilon_{\sim i}\right)$
Now, when $\bm{\epsilon}_{\sim i}$ is fixed, $F(\bm{\epsilon})$ can take two values. Therefore, 
\begin{equation}
\mathrm{var}_i(F(\bm{\epsilon})) = p(1-p)\left(F(\tilde{S}_i) - F(\tilde{S}_i \cup \{i\}\right)^2 = p(1-p)\left(\sum_{j,k}\Delta^{i}_{j,k}(\tilde{S}_i)\right)^2
\label{tensor_var_delta}
\end{equation}
By Cauchy-Schwarz inequality,
\begin{equation}
\sqrt{\mathbb{E}\left(\sum_{j,k}\Delta^{i}_{j,k}(\tilde{S}_i)\right)^2} \leq \sum_{j,k} \sqrt{\mathbb{E}\left(\Delta^{i}_{j,k}(\tilde{S}_i)\right)^2}
\label{triangle_delta}
\end{equation}

Clearly, $\Delta^{i}_{j,k}(\tilde{S}_i) \neq 0$ only if one of the following is true:
\begin{enumerate}
\item
$j=i$ and $k \neq i$
\item
$j \neq i$ and $k = i$
\item
$j \in N(i)$ and $k \not \in N(i)\cup \{i\}$
\item
$j \not \in N(i)\cup \{i\}$ and  $k \in N(i)$
\end{enumerate}
For case 1, considering sub cases $k \in N(i)$ and $k \not \in N(i)$, we conclude:
$$\Delta_{i,k}^{i}(\tilde{S}_i) = - \left(d_{i,\tilde{S}_i} -d_{k,\tilde{S}_i}\right)^2\mathbbm{1}_{k\in \tilde{S}_i^c}$$
$d_{i,\tilde{S}_i} \sim \mathrm{Bin}(p,d)$, $d_{k,\tilde{S}_i} \sim \mathrm{Bin}(p,d-1)$ if $k \in N(i)$ and $d_{k,\tilde{S}_i} \sim \mathrm{Bin}(p,d)$ if $k \not \in N(i)$. We use the same Minkowski inequality - McDiarmid concentration argument as in Lemma ~\ref{poly_bound} to conclude that when $j =i$ and $k \neq i$
\begin{equation}
\mathbb{E}\left(\Delta^{i}_{i,k}(\tilde{S}_i)\right)^2 = O(d^2) 
\label{case_1_tensor_bound}
\end{equation}
By a similar argument for case 2, when $j \neq i$ and $k =i$,

\begin{equation}
\mathbb{E}\left(\Delta^{i}_{j,i}(\tilde{S}_i)\right)^2 = O(d^2) 
\label{case_2_tensor_bound}
\end{equation}
We consider case 3. Let $j\in N(i)$ and $k \not \in N(i)\cup\{i\}$. 
Then, $$\Delta_{j,k}^{i}(\tilde{S}_i) = - \left(2(d_{j,\tilde{S}_i} - d_{k,\tilde{S}_i} + d_{j,k}) -1\right)\mathbbm{1}_{j\in \tilde{S}_i}\mathbbm{1}_{k \in \tilde{S}_i^c}$$
Clearly, $d_{j,S_i} \sim \mathrm{Bin}(p,d-1)$ and $d_{k,S_i} \sim \mathrm{Bin}(p,d)$. Using similar reasoning as case 1, we conclude that when $j\in N(i)$ and $k \not \in N(i)\cup\{i\}$ 

\begin{equation}
\mathbb{E}\left(\Delta^{i}_{j,k}(\tilde{S}_i)\right)^2 = O(d) 
\label{case_3_tensor_bound}
\end{equation}

We can repeat a similar argument for case 4 to conclude that when
$j \not \in N(i)\cup\{i\}$ and $k \in N(i)$, 

\begin{equation}
\mathbb{E}\left(\Delta^{i}_{j,k}(\tilde{S}_i)\right)^2 = O(d) 
\label{case_4_tensor_bound}
\end{equation}

All the $O()$ in the bounds above are uniform for $p\in [0,1]$.
There are at most $2n$ pairs ${j,k}$ which satisfy cases 1 or 2. There are at most $2nd$ pairs which satisfy cases 3 or 4.
Therefore, using equations~\eqref{triangle_delta}~\eqref{case_1_tensor_bound}~\eqref{case_2_tensor_bound}~\eqref{case_3_tensor_bound}~\eqref{case_4_tensor_bound} 

$$\sqrt{\mathbb{E}\left(\sum_{j,k}\Delta^{i}_{j,k}(\tilde{S}_i)\right)^2}
 = 2nO(d) + 2ndO(\sqrt{d}) = O(nd^{\frac{3}{2}})$$
Therefore, we conclude from equation~\eqref{tensor_var_delta} that for every $i \in V$
$$\mathbb{E}(\mathrm{var}_i(F(\bm{\epsilon}))) = O(n^2d^3)$$
By Efron-Stein method, we conclude that
$$\mathrm{var}(F(\bm{\epsilon})) = O(n^3d^3)$$
\end{proof}

We bound the second term in Theorem~\ref{stein_clt}
\begin{lemma}
Let $s\in (\delta,1-\delta)$ with $0<\delta<\frac{1}{2}$.
\begin{equation}
 \frac{\sqrt{\mathrm{var}(\mathbb{E}((Y^{\prime}-Y)^2|S))}}{\sqrt{2\pi}\lambda} = O\left(\sqrt{\frac{d}{n}}\right)
\end{equation}
The bound above holds uniformly for $s\in (\delta,1-\delta)$.
\label{lem:second_term_clt}
\end{lemma}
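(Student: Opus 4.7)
The plan is to assemble this bound directly from results already established in the excerpt; no new probabilistic machinery is required and all the substantive work has been done by Lemma~\ref{stein_bound_2} and Lemma~\ref{std_dev}.

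First, I would start from the identity displayed in equation~\eqref{second_quantity}, which rewrites the quantity of interest as
\[
\frac{\sqrt{\mathrm{var}(\mathbb{E}((Y^{\prime}-Y)^2|S))}}{\sqrt{2\pi}\lambda} = \sqrt{\frac{2}{\pi}}\,\frac{1}{(n-1)\sigma^2}\,\sqrt{\mathrm{var}\!\left(\sum_{j,k\in V}h_{j,k}(S)\right)},
\]
where $h_{j,k}(S)=(d_{j,S}+d_{j,k}-d_{k,S})^2\mathbbm{1}_{j\in S}\mathbbm{1}_{k\in S^c}$. This identity was obtained using the conditional expectation computation of $\mathbb{E}[(T-T^{\prime})^2 \mid S]$ and the definition of the $\lambda$-Stein pair in Lemma~\ref{lem:expectation_stein_pair}.

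Next, I would plug in the two available bounds. Lemma~\ref{stein_bound_2} gives $\mathrm{var}\!\left(\sum_{j,k}h_{j,k}(S)\right) = O(n^3 d^3)$ uniformly in $s\in(\delta,1-\delta)$, and Lemma~\ref{std_dev} gives $\sigma^2 = \Theta(nd)$ with constants depending only on $\delta$, since $s^2(1-s)^2$ is bounded away from zero on that interval. Combining these yields
\[
\sqrt{\tfrac{2}{\pi}}\,\frac{\sqrt{O(n^3 d^3)}}{(n-1)\,\Theta(nd)} \;=\; O\!\left(\frac{n^{3/2} d^{3/2}}{n^{2}\,d}\right) \;=\; O\!\left(\sqrt{\tfrac{d}{n}}\right),
\]
and uniformity in $s$ is inherited from the uniformity of the two input lemmas.

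I do not expect a real obstacle here, since the hard variance bound is already in place in Lemma~\ref{stein_bound_2}. The only point worth a moment's care is checking that the implicit constants in both inputs really are independent of $s$ on $(\delta,1-\delta)$; this is stated explicitly in both lemmas, so it is pure bookkeeping. Once this lemma is combined with Lemma~\ref{stein_bound_1}, Theorem~\ref{stein_clt} will give the Wasserstein bound $d_{\mathsf{W}}(L,\mathcal{N}(0,1)) = O(\sqrt{d/n})$, from which the Kolmogorov-Smirnov bound of Theorem~\ref{cut_size_clt} follows by a standard smoothing argument (losing a square root).
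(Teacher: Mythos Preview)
Your proposal is correct and follows essentially the same approach as the paper: invoke equation~\eqref{second_quantity}, plug in the variance bound $O(n^3d^3)$ from Lemma~\ref{stein_bound_2} and the fact that $\sigma^2=\Theta(nd)$ uniformly on $(\delta,1-\delta)$ from Lemma~\ref{std_dev}, and simplify. The paper's own proof is just this two-line substitution, so your write-up is if anything more detailed than needed.
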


\begin{proof}
Using Lemma ~\ref{stein_bound_2} in equation~\eqref{second_quantity} and using the fact that $\sigma^2 = \Theta(nd)$ for all $s \in [\delta,1-\delta]$ uniformly, we conclude 

\begin{equation}
 \frac{\sqrt{\mathrm{var}(\mathbb{E}((Y^{\prime}-Y)^2|S))}}{\sqrt{2\pi}\lambda} = O\left(\sqrt{\frac{d}{n}}\right)
 \label{stein_quantity_2}
\end{equation}
\end{proof}

\begin{proof}[\textbf{Proof of Theorem~\ref{cut_size_clt}}]
We use Lemmas~\ref{lem:second_term_clt} and~\ref{stein_bound_1} along with Theorem~\ref{stein_clt} to show that
$$d_{\mathsf{W}}(\mathcal{L}(Y), \mathcal{N}(0,1)) \leq C\sqrt{\frac{d}{n}}$$
We conclude the bound for the Kolmogorov metric using the fact that when one of the arguments has the standard normal distribution,
$d_{\mathsf{KS}} \leq C \sqrt{d_{\mathsf{W}}}$
for some absolute constant $C$.
\end{proof}

\section{Proof of Concentration of Quadratic Forms}
\label{sec: quadratic_form_concentration_proof}
We continue here from the end of Section~\ref{sec:burkholder_subexp}. We refer to \cite{chatterjee2007stein} for details of the exchangeable pairs method for concentration inequalities and theorem 2.3 in \cite{boucheron2013concentration} for properties of sub-gamma distributions.

We begin with the Stein pair $(S,S^{\prime})$ defined in Section~\ref{sec:clt} with $|S| = l$ and $0 \leq l \leq n$. Following the notation in \cite{chatterjee2007stein},  we take $F(S,S^{\prime}) := T(S) - T(S^{\prime})$. Then, 
$$f(S) := \mathbb{E}\left[F(S,S^{\prime})|S\right] = \lambda (T - \mathbb{E}(T))$$ and 
\begin{align}
\Delta(S) &:= \frac{1}{2}\mathbb{E}\left[(f(S) -f(S^{\prime}))F(S,S^{\prime})|S\right]\nonumber \\ 
&= \frac{\lambda}{2}\mathbb{E}\left[(T-T^{\prime})^2|S\right]\nonumber \\
&= \frac{4\lambda}{n^2}\sum_{j\in V}\sum_{k\in V} (d_{j,S} - d_{k,S}+d_{j,k})^2 \mathbbm{1}_{j\in S}\mathbbm{1}_{k \in S^c} \nonumber \\
&:= \frac{4\lambda}{n^2} \sum_{j\in V}\sum_{k \in V} g_{j,k}^2(S)\mathbbm{1}_{j\in S}\mathbbm{1}_{k \in S^c}  \label{delta_definition}
\end{align}

From Theorem 1.5 in \cite{chatterjee2007stein},
\begin{align}
\mathbb{E}((f(S))^{2q}) &\leq (2q-1)^q \mathbb{E}(\Delta(S)^q) \nonumber \\
\implies\mathbb{E} (T-\mathbb{E}(T))^{2q} &\leq \left(\frac{2q-1}{\lambda^2}\right)^{q} \mathbb{E}(\Delta(S)^q)\nonumber \\
&= 4^q\left(\frac{2q-1}{\lambda}\right)^{q} \mathbb{E}\left(\frac{1}{n^2}\sum_{j\in V}\sum_{k \in V} g_{j,k}^2(S)\mathbbm{1}_{j\in S}\mathbbm{1}_{k \in S^c}\right)^q \nonumber \\
&\leq 4^q \left(\frac{2q-1}{\lambda}\right)^{q} \frac{1}{n^2} \sum_{j\in V}\sum_{k \in V}\mathbb{E}\left[g_{j,k}(S)^{2q}\right] \nonumber \\
&\leq 2.4^{q}.\left(\frac{2q-1}{\lambda}\right)^{q}q![2d(1+o(1))]^q \nonumber \\
&\leq 2.(2q)!.\left(\sqrt{4nd(1+o(1))}\right)^{2q} \label{sub_gamma_moment}
\end{align}
Where we used Jensen's inequality in the third step and Equation~\eqref{moment_sub_gaussian_bound} in the fourth step.

Following the proof of Theorem 2.3 in \cite{boucheron2013concentration}, we conclude that for every $\gamma$ such that $ 2|\gamma|\sqrt{4nd(1+o(1))} < 1 $, 
\begin{equation}
\log{\mathbb{E}\exp{\gamma(T - \mathbb{E}T)}} \leq \frac{32nd\gamma^2(1+o(1))}{ 1 - 16nd\gamma^2(1+o(1))}  \nonumber
\end{equation}
Which is the required result in Equation~\eqref{reverse_jensen}
This follows from a simple power series argument.
\section{Proof of Theorem~\ref{thm:ising_threshold}}

\label{sec:ising_proof}
We use the notation established in Section~\ref{sec:ising_model_result}.

\begin{proof}[\textbf{Proof of Theorem~\ref{thm:ising_threshold}}]
Consider the first case: $\beta^{\mathrm{dreg}}_n \sqrt{nd} = \Theta(\sigma_n \beta^{\mathrm{dreg}}_n) \geq  L_n \to \infty $. We first fix the parameters $\beta^{\mathrm{dreg}}_n$, $\beta^{\mathrm{CW}}$, $h^{\mathrm{dreg}}$ and $h^{\mathrm{CW}}$. $p_n$ and $q_n$ satisfy  Conditions~\textbf{C1}-\textbf{C4} of Theorem~\ref{main_theorem} as shown in Section~\ref{sec:ising_model_result}. 

We invoke  Theorem~\ref{main_theorem} to conclude that for some choice of $T_n$ depending only on $\tau_n$, $L_n$ and $S_n$, the canonical test $\mathcal{T}^{\mathsf{can}}(T_n,g_n)$ can distinguish between $p_n$ (with given parameters $\beta^{\mathrm{CW}}$, $h^{\mathrm{CW}}$, $\beta^{\mathrm{dreg}}_n$ and $h^{\mathrm{dreg}}$) and $q_n$ with a single sample with probability of error 
$$p_{\mathsf{error}} \leq f(L_n,\alpha_n, \tau_n) \to 0$$
We conclude from Remark~\ref{rem:parameter_independence_ising} that the canonical tests $\mathcal{T}(T_n,g_n)$ and $\mathcal{T}(T_n,\kappa_{\mathsf{Ising}})$ are the same. Therefore, the test $\mathcal{T}(T_n,\kappa_{\mathsf{Ising}})$ has the same success probability for the same choice of $T_n$. $\kappa_{\mathsf{Ising}}$ doesn't depend on the unknown parameters. The parameters $S_n$, $\tau_n$ and $\alpha_n$ depend only on $\beta_{\mathsf{max}}$ and $h_{\mathsf{max}}$. Therefore, given $L_n$, $T_n$ can be chosen without the knowledge of the unknown parameters. The probability of error tends to $0$ uniformly for every choice of the unknown parameters. Hence, we conclude that the canonical test $\mathcal{T}(T_n,\kappa_{\mathsf{Ising}})$ succeeds with high probability for any choice of the unknown parameters.

We now consider the second case: $\beta^{\mathrm{dreg}}_n \sqrt{nd} = \Theta(\sigma_n \beta^{\mathrm{dreg}}_n) \leq \frac{1}{ L_n} \to 0 $. It is sufficient to prove that for a specific sequence $(\beta^{\mathrm{CW}}_n, \beta^{\mathrm{dreg}}_n)$ and external fields $(h^{\mathrm{dreg}},h^{\mathrm{CW}})$, $$d_{\mathsf{TV}}(p_n,q_n) \to 0\,.$$
We take $\beta^{\mathrm{CW}} = \frac{nd\beta^{\mathrm{dreg}}_n}{n-1}$ and $h^{\mathrm{dreg}} = h^{\mathrm{CW}}$. A simple calculation using Lemma~\ref{lem:expectation_stein_pair} we show that $$e_m(g) = \mathbb{E}g(X_m) = 0\,.$$ Using Equation~\eqref{eq:cut_size_identity}, we conclude $$g(X_m) - e_m(g) = -2\left(T\left(S_m\right) - \mathbb{E}\left[T\left(S_m\right)\right]\right)$$
Where $S_m = S(X_m)$.
Clearly, for $n$ large enough, we have $4\beta^{\mathrm{dreg}}_n\sqrt{4nd(1+o(1))} <1$. We shall denote $T_m :\stackrel{d}{=} T(S_m)$
\begin{align}
\mathbb{E}_p\left[e^{\beta^{\mathrm{dreg}}_n g}\right] &= \sum_{m_0\in M_n} p\left(m(x) = m_0\right)\mathbb{E}\left[e^{\beta^{\mathrm{dreg}}_n g(X_m)}\right]\nonumber \\
&= \sum_{m_0\in M_n} p\left(m(x) = m_0\right)\mathbb{E}\left[e^{-2\beta^{\mathrm{dreg}}_n \left(T_m- \mathbb{E}\left[T_m\right]\right)}\right]\nonumber \\
&\leq e^{\frac{128nd\left(\beta^{\mathrm{dreg}}_n\right)^2(1+o(1))}{ 1 - 64nd\left(\beta^{\mathrm{dreg}}_n\right)^2(1+o(1))} } \nonumber \\
&\leq e^{\frac{128nd\left(\beta^{\mathrm{dreg}}_n\right)^2(1+o(1))}{ 1 - 8\beta^{\mathrm{dreg}}_n\sqrt{nd(1+o(1))}} }
\label{eq:sub_exp_cut}
\end{align}
Where we have used Equation~\eqref{reverse_jensen} in the second to last step.
To bound the variance $\mathrm{var}_p{g}$, we note that $\mathbb{E}_p(g)  = 0$ and $\mathbb{E}g(X_m) = 0$. Therefore,
\begin{equation}
\mathrm{var}_p{g} = \mathbb{E}_p g^2 = \sum_{m_0 \in M_n}p\left(m(x) = m_0\right) \mathbb{E}g^{2}(X_{m_0}) = \sum_{m_0 \in M_n}p\left(m(x) = m_0\right)\sigma^2_{m_0}
\label{eq:var_identity}
\end{equation}
Clearly, $\beta^{\mathrm{CW}} \to 0$. Therefore, for $n$ large enough, $\beta^{\mathrm{CW}} < 1$. We refer to the large deviations result for Curie-Weiss model given in \cite{ellis2007entropy} to conclude that $p(|m(x)| > m_{\mathsf{max}}(h_{\mathrm{max}})) \leq C_1e^{-nC}$ for some positive constants $C_1$ and $C$. 
Clearly, $$\sigma_{m_0} \leq Cn^2d^2$$ for every $m_0$. Invoking Theorem~\ref{cut_size_clt}, we conlude that whenever $m_0 \leq m_{\mathsf{max}}(h_{\mathrm{max}})$, $\sigma_{m_0} \leq D\sqrt{nd}$ for some constant $D$. Plugging these results in Equation~\eqref{eq:var_identity}, we conclude that
\begin{equation}
\mathrm{var}_p[g] \leq Dnd + C_1n^2d^2e^{-nC}  = O(nd)
\label{eq:var_upper_bound}
\end{equation}

Therefore, using Equations~\eqref{eq:var_upper_bound} and~\eqref{eq:sub_exp_cut} we conclude that for this particular choice of $\beta^{\mathrm{CW}}$ and $h^{\mathrm{CW}}$, $p$ and $g$ satisfy Condition \textbf{C5}. Therefore, invoking the second part of Theorem~\ref{main_theorem}, we conclude that $$d_{\mathsf{TV}}(p_n,q_n) \to 0\,.$$ which proves our result.
\end{proof}

\section{Proof of Theorem~\ref{thm:ergm_threshold}}
\label{sec:ergm_proof}
Consider Erd\H{o}s-R\'enyi model over $n$ vertices. We let $N = {{n}\choose{2}}$, the maximum number of edges. Consider $\mathcal{X} := \{0,1\}^N$. We index elements of $x \in \mathcal{X}$ by tuples $e = (i,j)$ such that $i,j \in [n]$ and $i < j$. We can represent any simple graph $G= (V,E)$ over $n$ vertices by an element $x(G) \in \mathcal{X}$ such that $x(G)_e = 1$ iff $e \in E$. Henceforth we use `$e$th component of $x$' and `edge $e$' interchangeably.

Consider an $N\times N$ symmetric matrix $H$ such that $H_{e,f} \in \{0,1\}$ and $H_{e,f} = 1$ iff $e$ and $f$ have a common vertex. Clearly, $H$ is the adjacency matrix of a $d=2(n-2)$ regular graph over $N$ vertices.  We partition $\mathcal{X}$ into Hamming spheres. 
For $m \in \{0,1,\dots,N\}=: M_n$, define $A_m \subset \mathcal{X}$ to be the set of simple graphs over $n$ vertices with exactly $m$ edges. Here we define the function $E(x):= m(x)$ to be the number of edges in the graph associated with $x \in \mathcal{X}$. Clearly, the number of $V$ graphs \hbox{(\shape{\Vshape})} which are subgraphs of the graph represented by $x$ is $$V(x)  = \frac{1}{2}\sum_{e,f} x_e x_f H_{ef}  = \frac{1}{2}x^{\intercal}Hx$$
 
Let $\mathbbm{1}$ be the all one vector.  A simple calculation using the fact that $H$ is a regular matrix shows that:
\begin{align}
(2x-\mathbbm{1})^{\intercal}H(2x-\mathbbm{1}) &= 4x^{\intercal}Hx - 4\mathbbm{1}^{\intercal}Hx + \mathbbm{1}^{\intercal}H\mathbbm{1} \nonumber\\
&= 8V(x) - 8(n-2)E(x) + 2(n-2)N\label{set_transform}
\end{align}

Clearly, $2x- \mathbbm{1} \in \{-1,1\}^N$ and $|\{ e : 2x_e-1 = 1 \}|= E(x)$. 

 Let $\mu$ be the probability measure associated with $G(n,p_n)$ ($p(\cdot)$ in the notation of Theorem~\ref{main_theorem}) such that $\delta<p_n<1-\delta$ for some constant $\delta > 0$. We shall drop the subscript of $p_n$ for the sake of clarity. Since $E$ is a binomial random variable, we can easily show using McDiarmid's inequality that: 

$$\mu\left(E(x) \in \left[\frac{\delta}{2}N,\left(1-\frac{\delta}{2}\right)N\right]\right) \geq 1 - 2e^{-c(\delta)N} =: 1- \alpha_n$$
For some constant $c(\delta) > 0$. Therefore, we let 
$$S_n = M_n \cap \left[\frac{\delta}{2}N,\left(1-\frac{\delta}{2}\right)N\right]$$
We let $g(x) = \left(n\left(\frac{\beta_1 -\frac{1}{2}\log{\frac{p}{1-p}}}{\beta_2}\right)E(x) + V(x)\right)$.

\begin{remark}
Let $\mu^{(m)}$ be the conditional distribution $\mu(\cdot| E(x) = m)$ ($p^{(m)}$ in Section~\ref{sec:abstract_result}). Similar to Remark~\ref{rem:parameter_independence_ising} about Ising models, we note that $\mu^{(m)} $ is the uniform distribution over the graphs with fixed number of edges $m$ irrespective of the value of $p$ (i.e, uniform distribution over the set $A_m$). Proceeding as in Remark~\ref{rem:parameter_independence_ising}, let $\hat{X}$ be the given sample and $\hat{m} = m(\hat{X})$. We can decide whether $\hat{m} \in S_n$ without the knowledge of the unknown parameters and since
$$g(\hat{X}) - e_{\hat{m}}(g) = V(\hat{X}) - e_{\hat{m}}V(x)$$ and

$$\mathrm{var}_{\hat{m}}(g) = \mathrm{var}_{\hat{m}}(V)$$ 
we can decide whether $\frac{g(\hat{X}) - e_{\hat{m}}(g)}{\sigma_{\hat{m}}(g)} \geq T$ without the knowledge of the unknown parameters. We conclude that $\mathcal{T}(T_n,g_n) = \mathcal{T}(T_n,V)$
\label{rem:parameter_independence_ergm}
\end{remark}

Let $X_m \sim \mu^{(m)}$. Therefore, whenever $x \in A_m$ for $m \in \left[\frac{\delta}{2}N,\left(1-\frac{\delta}{2}\right)N\right] $, $2X_m -1$ satisfies the hypothesis for Theorem~\ref{cut_size_clt}. Using Equation~\eqref{set_transform} and the fact that $E(X_m) = m$ is a constant a.s. we conclude that:  
$$\mathrm{var}\left(g(X_m)\right) =: \sigma_m^2 = \Theta(n^3)$$
and
$$d_{\mathsf{KS}}\left(\mathcal{L}\left(\tfrac{g(X_m) - \mathbb{E}g(X_m)}{\sigma_m}\right),\mathcal{N}(0,1)\right) \leq C\sqrt[4]{\frac{1}{n}} =: \tau_n$$
Where we have used the fact that degree $d = \Theta(n)$ and number of rows/columns is $N = \Theta(n^2)$. All the $\Theta(\cdot)$ and bounds hold uniformly for all $m \in S_n$. Let $\beta_n := \frac{2\beta_2}{n}$. We take $\nu(x) = \mu(x)\frac{e^{\beta_n g(x)}}{\mathbb{E}_{\mu}e^{\beta_n g}} =: \mathsf{ERGM}(\beta_1,\beta_2)$.

To prove Theorem~\ref{thm:ergm_threshold}, we will need the following Lemma, where we get very small variance of a quadratic function by picking the right coefficient.
\begin{lemma}
\label{lem:super_concentration}
Let $p \in [0,1]$ be arbitrary. Then there exists an absolute constant $c$ such that whenever $\frac{2\beta_2}{n} =: \beta_n < \frac{c(1-o(1))}{n^3}$ for some absolute constant $c$ then for some choice of $\beta_1$ as a function of $p$ and $\beta_2$,
the following hold:
\begin{enumerate}
\item $\mathrm{var}_\mu(g) = O(n^3)= O(N^{\frac{3}{2}})$
\item
$\log{\mathbb{E}_{\mu} \left[e^{\beta_n(g - \mathbb{E}_{\mu}g)}\right]} \leq  \frac{Cn^3\beta_n^2(1+o(1))}{ 1 - D\beta_n\sqrt{n^3(1+o(1))}} +  \frac{C_1 \beta_n^2 n^2(1+o(1))}{1-|\beta_n|D_1n(1+o(1))}$
\end{enumerate}
Where $C$, $C_1$, $D$ and $D_1$ are absolute constants
\end{lemma}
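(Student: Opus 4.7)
The plan is to choose $\beta_1$ so as to cancel, to leading order, the dominant linear-in-$E$ component of $V$ under $\mu = G(n,p)$; after this cancellation $g - \mathbb{E}_\mu g$ becomes a centred quadratic form in the independent Bernoulli variables $x_e - p$, and both claims follow from a sphere-wise application of~\eqref{reverse_jensen} together with Bernstein / Hanson--Wright type estimates for the binomial $E$. Concretely, writing $d := 2(n-2)$ so that $H$ is $d$-regular on $N = \binom{n}{2}$ vertices, I would set $c' := n(\beta_1 - \tfrac{1}{2}\log\tfrac{p}{1-p})/\beta_2 = -dp$. Combining~\eqref{set_transform} with~\eqref{eq:cut_size_identity} applied on $H$ gives the clean simplification $V(x) = (n-2)E(x) - \tfrac{1}{2}T(S_x)$, where $S_x := \{e : x_e = 1\}$ and $T$ is the $H$-edge-cut, so $g(x) = (c' + n - 2)E(x) - \tfrac{1}{2}T(S_x)$. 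Writing $F(m) := \mathbb{E}[g \mid E = m]$, Lemma~\ref{lem:expectation_stein_pair} applied to $H$ gives $\mathbb{E}[T \mid E = m] = dm(N-m)/(N-1)$, and substituting $c' = -dp$ reduces $F$ to a quadratic in $E$ with both coefficients of $Y := E - pN$ of order $O(1/n)$,
\[
F(E) - F(pN) = \tfrac{(n-2)(2p-1)}{N-1}\,Y + \tfrac{n-2}{N-1}\,Y^{2}.
\]

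For the variance bound I would decompose $g - \mathbb{E}_\mu g = (g - F(E)) + (F(E) - \mathbb{E} F(E))$, the two summands being $L^{2}(\mu)$-orthogonal because the first has conditional mean zero given $E$. On each sphere $\{E = m\}$ the first summand equals $-\tfrac{1}{2}(T - \mathbb{E}[T \mid E = m])$, which by Lemma~\ref{std_dev} on $H$ has conditional variance $\Theta(Nd) = \Theta(n^{3})$ uniformly for $m$ in the bulk of $\mathrm{Bin}(N,p)$, the tails contributing negligibly by standard binomial concentration. The second summand is quadratic in $Y$ with $O(1/n)$ coefficients, $\mathrm{var}(Y) = \Theta(n^{2})$ and $\mathrm{var}(Y^{2}) = O(n^{4})$, which together yield variance $O(1) + O(n^{2}) = O(n^{2})$. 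Thus $\mathrm{var}_{\mu}(g) = \Theta(n^{3})$.

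For the MGF bound I condition on $E$:
\[
\mathbb{E}_\mu\, e^{\beta_n(g - \mathbb{E}_\mu g)} = \mathbb{E}_\mu\!\Big[ e^{\beta_n(F(E) - \mathbb{E} F(E))}\;\mathbb{E}\big[ e^{-\beta_n(T - \mathbb{E}[T \mid E])/2} \,\big|\, E\big]\Big].
\]
The inner conditional factor is bounded uniformly in $m$ by~\eqref{reverse_jensen} on $H$, yielding $\exp\!\big(\tfrac{8Nd\beta_n^{2}(1+o(1))}{1-4Nd\beta_n^{2}(1+o(1))}\big)$; the elementary inequality $\tfrac{ax^{2}}{1-bx^{2}} \le \tfrac{ax^{2}}{1-\sqrt{b}\,x}$ (valid for $\sqrt{b}\,x \in [0,1)$) recasts this as the first summand of the lemma. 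For the outer factor, Cauchy--Schwarz separates the linear and quadratic contributions in $Y$: the linear part is handled by Bernstein and is absorbable into the first summand, while $Y^{2} - \mathbb{E} Y^{2}$ is a centred quadratic form in i.i.d.\ bounded variables whose off-diagonal matrix has Frobenius and operator norms both $O(N)$, so Hanson--Wright (together with Bernstein for the diagonal sum) gives $\log \mathbb{E}\, e^{\lambda(Y^{2}-\mathbb{E}Y^{2})} \le \tfrac{C\lambda^{2} N^{2}}{1-D\lambda N}$; plugging in $\lambda = O(\beta_n/n)$ produces exactly the second summand $\tfrac{C_{1}\beta_n^{2} n^{2}(1+o(1))}{1-D_{1}|\beta_n| n(1+o(1))}$.

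The main obstacle is the algebraic identity $V = (n-2)E - T/2$: it is the pivot that converts the ERGM sufficient statistic into a Hamming-sphere cut already controlled in Section~\ref{sec:burkholder_subexp}, after which everything else is polynomial bookkeeping. A secondary subtlety is verifying that the canonical choice $c' = -dp$ kills the full linear-in-$E$ piece of $F$, leaving only a genuinely quadratic residue in $Y$ and hence the $O(n^{2})$ (rather than naive $O(n^{4})$) extra variance that underlies the sharp threshold.
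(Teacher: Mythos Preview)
Your proposal is correct and follows the same architecture as the paper: the same choice of $\beta_1$ (equivalently $c' = -dp$, which is exactly what the paper's choice reduces to), the same conditioning on $E$, and the same use of the Hamming-sphere concentration~\eqref{reverse_jensen} for the inner factor. The identity $V=(n-2)E-\tfrac12 T$ that you isolate is precisely what the paper uses implicitly when it combines~\eqref{set_transform} with~\eqref{reverse_jensen} to obtain~\eqref{eq:conditional_concentration}.

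The one methodological difference is in the outer factor $\mathbb{E}_\mu\, e^{\beta_n(F(E)-\mathbb{E}F(E))}$. The paper packages $F(E)$ as $\tfrac{n-2}{N-1}h(X)$ with $h(X)=E^{2}-(2pN+1-2p)E$ and proves a bespoke sub-gamma bound for $h-\mathbb{E}h$ via another exchangeable-pairs / Burkholder argument (the resampling pair $(X,X')$ that replaces one Bernoulli coordinate), mirroring the machinery of Section~\ref{sec:burkholder_subexp}. You instead reach the same $\tfrac{C\lambda^{2}N^{2}}{1-D\lambda N}$ bound by invoking Hanson--Wright for the all-ones quadratic form in the centred Bernoullis, with Bernstein handling the linear and diagonal remainders. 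Both routes are valid and give identical orders; yours is shorter and uses off-the-shelf inequalities, while the paper's stays self-contained within the Stein framework used throughout. The paper also extracts the variance bound from the MGF (via sub-gamma moment bounds) rather than by the direct orthogonal decomposition you give; either is fine.
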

We defer the proof of this Lemma to Appendix~\ref{sec:super_concentration}.

We proceed with the proof of Theorem~\ref{thm:ergm_threshold}.

\begin{proof}[\textbf{Proof of Theorem~\ref{thm:ergm_threshold}}]
Since $\beta_n := \frac{2\beta_2}{n}$, it is sufficient to consider the regimes:
$\beta_n n^{3/2} \to \infty$ and $\beta_n n^{3/2} \to 0$. By Remark~\ref{rem:parameter_independence_ergm}, $\mathcal{T}(T_n,g_n) = \mathcal{T}(T_n,V)$, which doesn't depend on parameters $\beta_1$, $p$ or $\beta_2$. The proof of the first part is similar to that in Theorem~\ref{thm:ising_threshold} and it follows from the discussion above and Theorem~\ref{main_theorem}.

We now assume $\beta_2 \leq \frac{1}{L_n}\frac{1}{\sqrt{n}}$ and fix $p \in [\delta,1-\delta]$.  To prove the second part it is sufficient to show one distribution in $H_0$ is near to one distribution in $H_1$ in the total variation sense. We will $\beta_1$ as a function of $p$ and $\beta_2$ such that $d_{\mathsf{TV}}(\mu_n,\nu_n) \to 0$.

 Using the notation of Theorem~\ref{main_theorem}, we have $\sigma_n = \Theta(n^{3/2})$.
 By Lemma~\ref{lem:super_concentration}, we conclude that Condition \textbf{C5} for Theorem~\ref{main_theorem} holds for some choice of $\beta_1$ and hence $d_{\mathsf{TV}}(\mu_n,\nu_n) \to 0$
\end{proof}

\section{Proof of Super Concentration}
\label{sec:super_concentration}
\begin{lemma}
If $E \sim \mathrm{Bin}(N,p)$, then,
$$\mathbb{E}(E-Np)^{2q} \leq q!C^qN^{q}$$
For some absolute constant $C$ independent of $N$
\label{binomial_subgaussian_moment}
\end{lemma}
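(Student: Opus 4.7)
The plan is to treat $E-Np$ as a sub-Gaussian random variable and extract the claimed moment bound from the associated tail bound. Since $E = \sum_{i=1}^{N} Y_i$ with $Y_i$ i.i.d.\ $\mathrm{Ber}(p)$, the centered summands $Y_i - p$ are independent, mean zero, and bounded in an interval of length one. I would therefore invoke Hoeffding's inequality to conclude
\[
\mathbb{P}\bigl(|E-Np|>t\bigr)\ \leq\ 2\exp\!\left(-\tfrac{2t^{2}}{N}\right).
\]
This is the key analytic input; everything else is bookkeeping.

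Next, I would convert the tail bound into a moment bound using the standard layer-cake identity
\[
\mathbb{E}|E-Np|^{2q}\ =\ \int_{0}^{\infty} 2q\,t^{2q-1}\,\mathbb{P}(|E-Np|>t)\,dt.
\]
Substituting the Hoeffding bound and performing the change of variables $u = 2t^{2}/N$ (so that $t^{2q-1}\,dt = \tfrac{1}{2}(Nu/2)^{q-1}(N/2)\,du$) turns the integral into a gamma integral. After routine simplification one obtains a bound of the form $4 q!\,(N/2)^{q}$, which gives the claimed estimate with an absolute constant $C$ (independent of $N$, $p$, and $q$). This is essentially the same derivation that was used after Equation~\eqref{eq:weakly_dependent_concentration} to convert a sub-Gaussian tail bound into moment bounds, and it is of the type cited from Theorem~2.1 of Boucheron-Lugosi-Massart.

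Since the proof is just Hoeffding followed by the tail-to-moment conversion, there is no real obstacle, only the choice of how cleanly to present the absolute constant $C$. An alternative, essentially equivalent, route would be to start from the moment generating function bound $\mathbb{E}\exp(\lambda(E-Np)) \leq \exp(\lambda^{2}N/8)$ (again from Hoeffding's lemma applied to each $Y_i - p$), extract the coefficient of $\lambda^{2q}/(2q)!$ via the Taylor expansion of $\exp$, and use $(2q)!/q! \leq 4^{q} q!$ to match the stated form. Either route yields the bound with a universal constant.
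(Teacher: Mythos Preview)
Your proposal is correct and follows essentially the same approach as the paper: the paper invokes McDiarmid's inequality (which for i.i.d.\ bounded summands is the same as Hoeffding) to obtain $\mathbb{P}(|E-Np|>t)\le 2e^{-2t^2/N}$, and then appeals to Theorem~2.1 of Boucheron--Lugosi--Massart to convert this sub-Gaussian tail into the stated moment bound. Your explicit layer-cake computation just unpacks that citation.
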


\begin{proof}
By McDiarmid's theorem, 
$$\mathbb{P}\left(|E-Np|>t\right) \leq 2e^{-\frac{2t^2}{N}}$$
We use the equivalence of moment inequalities and sub-gaussian concentrations (refer Theorem 2.1 in \cite{boucheron2013concentration}) to conclude the result. 
\end{proof}
Let $X$ be a random vector taking values in $\{0,1\}^N$ such that its coordinates are i.i.d. $\mathrm{Ber}(p)$. Consider the function $h(X) = E^2(X) -(2pN + 1 - 2p)E(X)$. As we shall see, this choice of coefficients is special since it corresponds to a very small variance.

 Obtain the random variable $X^{\prime}$ as follows: Choose n random index $I \sim \mathrm{unif}([N])$. $X_i = X_i^{\prime}$ whenever $i \neq I$ and $X^{\prime}_I \sim \mathrm{Ber}(p)$ independent of $X$. Clearly, $(X,X^{\prime})$ is an exchangeable pair.
\begin{lemma}
\begin{enumerate}
\item
$(h(X),h(X^{\prime}))$ is an $\eta$-Stein pair with respect to $\mathcal{F}(X)$ where $\eta = \frac{2}{N}$. 
\item
$\mathbb{E}h(X) = -p^2N(N-1)$
\end{enumerate}
\end{lemma}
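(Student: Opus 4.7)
Both parts reduce to elementary computations starting from the decomposition $E(X') = E(X) - X_I + X'_I$, where $E(X) = \sum_i X_i \sim \mathrm{Bin}(N,p)$, $I$ is uniform on $[N]$, and $X'_I$ is an independent $\mathrm{Ber}(p)$ draw.

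For part 2, I would simply plug the binomial moments $\mathbb{E}E = Np$ and $\mathbb{E}E^2 = Np(1-p) + N^2 p^2$ into the definition of $h$. The coefficient $(2pN+1-2p)$ is arranged so that the $N^2 p^2$ contribution from $\mathbb{E}E^2$ is exactly cancelled by the $-2pN\cdot \mathbb{E}E$ term; what remains collapses to $-p^2 N(N-1)$.

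For part 1, I would condition on $X$ and use that $\mathbb{E}[X_I \mid X] = E/N$ (since $I$ is uniform on $[N]$), $\mathbb{E}[X'_I \mid X] = p$ (by independence), and consequently $\mathbb{E}[X_I X'_I \mid X] = pE/N$. Since $X_I, X'_I \in \{0,1\}$, squaring gives $(X_I - X'_I)^2 = X_I + X'_I - 2 X_I X'_I$, so expanding $(E')^2 = (E - X_I + X'_I)^2$ and taking conditional expectations yields the closed forms
\begin{align*}
\mathbb{E}[E' \mid X] &= \tfrac{N-1}{N}\, E + p, \\
\mathbb{E}[(E')^2 \mid X] &= \bigl(1-\tfrac{2}{N}\bigr) E^2 + \bigl(2p + \tfrac{1-2p}{N}\bigr) E + p.
\end{align*}

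It then remains to verify $\mathbb{E}[h(X') \mid X] = (1-\tfrac{2}{N}) h(X) + \tfrac{2}{N}\, \mathbb{E}h(X)$ by matching coefficients of $E^2$, $E$, and the constant on both sides. The $E^2$ coefficient matches trivially (both equal $1-2/N$); the $E$ coefficient matches precisely because the linear correction $2pN+1-2p$ was designed for it, which is the key algebraic observation; and the constant terms agree in view of the formula from part 2. The argument is routine bookkeeping with no real obstacle—the only point worth highlighting is that the specific linear coefficient in the definition of $h$ is exactly what is needed for the Stein pair identity to hold with $\eta = 2/N$.
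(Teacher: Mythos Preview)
Your proposal is correct and follows essentially the same direct-computation approach as the paper; the only organizational difference is that the paper case-splits on whether $E' = E\pm 1$ or $E' = E$ and then deduces $\mathbb{E}h(X)$ from the Stein identity, whereas you expand $E' = E - X_I + X'_I$ using moment identities and verify $\mathbb{E}h(X)$ first from binomial moments. Both routes amount to the same elementary algebra.
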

\begin{proof}
We shorten $E(X)$ to $E$. Let $a :=  -(2pN + 1 - 2p)$
\begin{align*}
\mathbb{E}\left[h(X^{\prime}) - h(X) |X\right] &= p\left(1-\tfrac{E}{N}\right)\left( (E+1)^2 + a(E+1) - E^2-aE\right) \\&\quad + \tfrac{E}{N}(1-p)\left((E-1)^2 + a(E-1)-E^2 -aE\right) \\
&= -\tfrac{2}{N}E^2 + \tfrac{E}{N} \left[2pN -2p+1 - a\right] + p(	1+ a) \\
&= -\tfrac{2}{N}h(X) -2p^2(N-1)
\end{align*}

Using the definition of a Stein pair and the fact that $\mathbb{E}h(X) = \mathbb{E}h(X^{\prime})$, we conclude the result.
\end{proof}
We proceed in the same way as Section~\ref{sec:burkholder_subexp}.

$$F(X,X^{\prime}) := h(X) - h(X^{\prime})$$
$$f(X) := \mathbb{E}\left[F(X,X^{\prime})|X\right] = \eta (h(X) - \mathbb{E}h(X))$$
\begin{align}
\Delta(X) &:= \frac{1}{2}\mathbb{E}\left[(f(X) -f(X^{\prime}))F(X,X^{\prime})|X\right]\nonumber \\ 
&= \frac{\eta}{2}\mathbb{E}\left[\left(h(X)-h(X^{\prime})\right)^2|X\right]\nonumber \\
&= \frac{\eta}{2}\left[  p\left(1-\tfrac{E}{N}\right)(2E-2pN+2p)^2 + \tfrac{E}{N}(1-p)(2E-2pN+2p-2)^2\right] \nonumber\\
&\leq 2\eta \left[p(E-pN+p)^2 + (1-p)(E-pN+p-1)^2 \right] 
\label{tensorisation_independent}
\end{align}
From Theorem 1.5 in \cite{chatterjee2007stein}, for every $q \in \mathbb{N}$,
\begin{align}
\mathbb{E}((f(X))^{2q}) &\leq (2q-1)^q \mathbb{E}(\Delta(X)^q) \nonumber \\
\implies\mathbb{E} (h(X)-\mathbb{E}h(X))^{2q} &\leq \left(\frac{2q-1}{\eta^2}\right)^{q} \mathbb{E}(\Delta(X)^q)\nonumber \\
&= 2^q\left(\frac{2q-1}{\eta}\right)^{q} \mathbb{E}\left[p(E-pN+p)^2 + (1-p)(E-pN+p-1)^2\right]^q\nonumber \\
&\leq  2^q\left(\frac{2q-1}{\eta}\right)^{q} \mathbb{E}\left[p(E-pN+p)^{2q} + (1-p)(E-pN+p-1)^{2q}\right]\nonumber \\
&= 8^q\left(\frac{2q-1}{\eta}\right)^{q} \mathbb{E}\left[p\left(\tfrac{E-pN}{2}+\tfrac{p}{2}\right)^{2q} + (1-p)\left(\tfrac{E-pN}{2}+\tfrac{p-1}{2}\right)^{2q}\right]\nonumber \\
&\leq \frac{8^q}{2}\left(\frac{2q-1}{\eta}\right)^{q} \left(\mathbb{E} \left[(E-pN)^{2q}\right] + p^{2q+1} + (1-p)^{2q+1}\right)\nonumber \\
&\leq C^{2q} (2q)! N^{2q}
\end{align}
Where we have used Equation~\eqref{tensorisation_independent} in the second step, Jensen's inequality for the convex function $\phi(x) = |x|^q$ in the third step, Jensen's inequality again for the function $\phi(x) = |x|^{2q}$ and Lemma~\ref{binomial_subgaussian_moment} in the final step.

Following the proof of Theorem 2.3 in \cite{boucheron2013concentration}, we conclude that for every $\gamma$ such that $ |\gamma| C N < 1 $, 

\begin{equation}
\mathbb{E}e^{\gamma\left[h(X)-\mathbb{E}h(X)\right]} \leq 2 \frac{C^2 \gamma^2 N^2}{1-|\gamma|CN}
\label{super_concentration}
\end{equation}
Where $C$ is an absolute constant.

We use Equation~\eqref{reverse_jensen} along with Equation~\eqref{set_transform} to conclude that for every $ m  \in \{0,\dots,N\}$ and some absolute constants $C$ and $D$,
\begin{equation} 
 \log\mathbb{E}e^{\beta_n g(X_m) } \leq \beta_n \mathbb{E}g(X_m) + \frac{Cn^3\beta_n^2(1+o(1))}{ 1 - D\beta_n\sqrt{n^3(1+o(1))}}
 \label{eq:conditional_concentration}
 \end{equation}

\begin{proof}[Proof of Lemma~\ref{lem:super_concentration}]
The bound on variance follows from the bound on MGF shown in the second part of the theorem after an application of Theorem 2.3 in \cite{boucheron2013concentration}. Therefore, it is sufficient to show the bound on the MGF.

\begin{align*}
\mathbb{E} g(X_m) &=  \mathbb{E}[g(X)| E(X) = m] \\
&= \mathbb{E}V(X_m) + n\left(\frac{\beta_1 -\frac{1}{2}\log{\frac{p}{1-p}}}{\beta_2}\right)m \\
&= \frac{n-2}{N-1}m^2 + n\left(\frac{\beta_1 -\frac{1}{2}\log{\frac{p}{1-p}}}{\beta_2}\right)m - \frac{n-2}{N-1}m
\end{align*}

Therefore, we can choose $\beta_1 $ a function of $p$ and $\beta_2$ such that :
$$\mathbb{E}[g(X)| E(X)] = \frac{n-2}{N-1}h(X) =  \frac{n-2}{N-1} \left(E^2 - (2pN +1 - 2p)E\right)$$

Therefore, 
\begin{align*}
\mathbb{E}_{\mu} \left[e^{\beta_n(g - \mathbb{E}_{\mu}g)}\right] &= \mathbb{E}_{\mu}\left[ \mathbb{E}\left[e^{\beta_n(g(X_m) - \mathbb{E}_{\mu}g)}\Big \rvert E(X) = m  \right]\right] \\
&\leq e^{\frac{Cn^3\beta_n^2(1+o(1))}{ 1 - D\beta_n\sqrt{n^3(1+o(1))}}}
\mathbb{E}_{\mu}\left[e^{\beta_n\frac{n-2}{N-1} (h(X)- \mathbb{E}_{\mu}h)}\right] \\
&\leq e^{\frac{Cn^3\beta_n^2(1+o(1))}{ 1 - D\beta_n\sqrt{n^3(1+o(1))}}} e^{ 2 \frac{C^2 \beta_n^2 n^2(1+o(1))}{1-|\beta_n|Cn(1+o(1))}}
\end{align*}
Here, we have used Equation~\eqref{eq:conditional_concentration}  and the fact that for this particular choice of $\beta_1$, $\mathbb{E}_{\mu} h = \frac{n-2}{N-1}\mathbb{E}_{\mu} g$. In the third step we have used Equation~\eqref{super_concentration}.
\end{proof}

\end{document}